\theoremstyle{plain}
\newtheorem{theorem}{Theorem}
\newtheorem*{theorem*}{Theorem}
\newtheorem{lemma}[theorem]{Lemma}
\newtheorem{cor}[theorem]{Corollary}
\newtheorem{prop}[theorem]{Proposition}
\theoremstyle{remark}
\newtheorem*{remark}{Remark}
\newtheorem{example}{Example}
\numberwithin{theorem}{section}
\numberwithin{equation}{section}
\def\N{\mathbb N}
\def\Z{\mathbb Z}
\def\R{\mathbb R}
\def\Q{\mathbb Q}
\def\A{\mathcal A}
\tikzset{
	bullet/.style={circle, fill=#1, draw=none, inner sep=0pt, minimum size=5pt},
	legend/.style={rectangle, fill=#1, draw=none, inner sep=0pt, minimum size=12pt},
}
\colorlet{myviolet}{violet!80}
\colorlet{mygray}{gray!60!blue!40}
\colorlet{mygreen}{black!60!green!100!blue}
\newcommand\DrawBullets[4]{
	\pgfmathsetmacro{\amin}{#2}
	\pgfmathsetmacro{\amax}{#3}
	\foreach \a in {\amin,...,\amax} { \node[bullet=#4] at (0.5*\a,0.5*#1) {}; }
}
\begin{document}

\author{Magdal\'ena Tinkov\'a}

\title{Arithmetics in number systems with cubic base}

\address{Department of Mathematics, FNSPE, Czech Technical University in Prague, Trojanova 13,
120 00 Praha 2, Czech Republic}

\address{Charles University, Faculty of Mathematics and Physics, Department of Algebra,
Sokolovsk\'{a} 83, 18600 Praha 8, Czech Republic}
\email{tinkova.magdalena@gmail.com}

\keywords{positional number systems, greedy expansions, Pisot numbers}

\thanks{The author was supported by Grant Agency of the Czech Technical University in Prague, grant No. SGS14/205/OHK4/3T/14, by Czech Science Foundation (GA\v{C}R), grant 17-04703Y, by the Charles University, project GA UK No. 1298218, by Charles University Research Centre program UNCE/SCI/022, and by the project SVV-2017-260456.}

\begin{abstract}
This paper focuses on greedy expansions, one possible representation of numbers, and on arithmetical operations with them. Performing addition or multiplication some additional digits can appear. We study bounds on the number of such digits assuming the finiteness of the expansion of the considered sum or product, especially for the case of cubic Pisot units.  
\end{abstract}

\setcounter{tocdepth}{2}  \maketitle 

\section{Introduction}

Using a base $\beta$ and a finite set of digits, we can express real or complex numbers in many different ways. One possibility, the so-called greedy expansions, was introduced by A. R\'{e}nyi in \cite{Ren}. These representation are, in lexicographical sense, the largest and conserve the order of real numbers. We can recognize them by comparing with the special representation of number $1$ \cite{Par}.

However, when we add or multiply numbers with a finite greedy expansion, i.e., ended with infinitely many zeros, it is not true that the result of these operations has always a finite greedy expansion. This issue is related to property $(F)$ or to property $(PF)$. As it was proved in \cite{FrouSo}, the only candidates for these properties are Pisot numbers.

Moreover, there were introduced $\beta$-integers, an analogy to rational integers, which have zeros after the fractional point in their greedy expansions. Nevertheless, there can appear even infinitely many nonzero digits after the fractional point in the greedy expansions of a sum or of a product of $\beta$-integers. Therefore, there were introduced bounds $L_{\oplus}(\beta)$ and $L_{\otimes}(\beta)$, maximums of the number of such digits. Note that in the definition of these values, we consider only the cases when our sum or product has a finite greedy expansion. 

In \cite{FrouSo, GuMaPe}, the authors proved that bounds $L_{\oplus}(\beta)$ and $L_{\otimes}(\beta)$ are finite for Pisot numbers. Currently, we know quite precise estimates for quadratic Pisot number, see \cite{Bur, GuMaPe, BaPeTu,lxkva}. In the case of cubic Pisot number, there were found estimates or exact values of these numbers only for few examples \cite{AmMaPe1, AmMaPe2, Ber, Mess}. The main goal of this paper is to extend this set of cubic Pisot numbers.

In this paper, we will restrict our attention to cubic Pisot units with real conjugates. First of all, Section \ref{sec:pre} presents some basic facts about greedy expansions and investigated bounds. Sections \ref{chap++} and \ref{chap:-+} discuss the cases with a sufficiently small positive conjugate, which gives the main result summarized in the following theorem.         
    
\begin{theorem*}
Let $\beta$ be a cubic Pisot unit which has a positive conjugate and does not satisfy property $(PF)$. Then $1\leq L_{\oplus}(\beta)\leq 2$ and $L_{\otimes}(\beta)\leq 4$ except for the dominant root of the polynominal $X^3-2X^2-X+1$, where $2\leq L_{\oplus}(\beta)\leq3$ and $3\leq L_{\otimes}(\beta)\leq 5$. 
\end{theorem*}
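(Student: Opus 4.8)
The plan is to reduce each of the two bounds to a finite check on a small, explicitly computable set of numbers in $[0,1)$, using the Galois conjugates of $\beta$, and then to carry that check out family by family. Fixing notation as in Section~\ref{sec:pre}: a cubic Pisot unit with real conjugates has minimal polynomial $X^3-aX^2-bX\mp1$ and two real conjugates $\gamma_1,\gamma_2$ with $|\gamma_i|<1$ and $\beta\gamma_1\gamma_2=\pm1$, and ``$\beta$ has a positive conjugate'' means at least one $\gamma_i>0$; this splits into two subcases, treated in Sections~\ref{chap++} and \ref{chap:-+}, the root of $X^3-2X^2-X+1$ lying in the second. Together with what is known about which cubic Pisot units fail property $(PF)$, the $\beta$ that actually have to be handled are exactly the (in general infinite) families appearing in those two sections, all with a sufficiently small positive conjugate; so it suffices to prove the stated bounds there.

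For the upper bounds, let $x,y$ be $\beta$-integers and suppose $z=x+y$ has a finite greedy expansion; write $z=\lfloor z\rfloor_\beta+\{z\}_\beta$ with $\{z\}_\beta\in[0,1)\cap\Z[\beta]$ and greedy expansion of length $\ell$, so the task is to bound $\ell$. Since $\beta$ is Pisot, every conjugate of a $\beta$-integer lies in a bounded interval $\mathcal Z_j$ whose endpoints I would read off from $d_\beta(1)$ for each family; from $\{z\}_\beta=x+y-\lfloor z\rfloor_\beta$ this gives $\sigma_j(\{z\}_\beta)\in\mathcal Z_j+\mathcal Z_j-\mathcal Z_j$, and for a product $\sigma_j(\{xy\}_\beta)\in\mathcal Z_j\cdot\mathcal Z_j-\mathcal Z_j$. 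As $\beta$ is a cubic unit with real conjugates, $\Z[\beta]$ embeds as a full-rank lattice in $\R^3$ via $\xi\mapsto(\xi,\sigma_1\xi,\sigma_2\xi)$, so the conditions $\{z\}_\beta\in[0,1)$ and $\sigma_1(\{z\}_\beta),\sigma_2(\{z\}_\beta)$ in the above intervals single out a finite set $\mathcal F_\oplus$ (resp.\ $\mathcal F_\otimes$) containing all fractional parts of sums (resp.\ products) of $\beta$-integers. Running the greedy algorithm on the members of $\mathcal F_\oplus$: those with a finite expansion have at most two fractional digits --- three for the root of $X^3-2X^2-X+1$ --- which yields $L_\oplus(\beta)\le2$ (resp.\ $\le3$), and the analogous count for $\mathcal F_\otimes$ gives $L_\otimes(\beta)\le4$ (resp.\ $\le5$). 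I would carry this out per coefficient family, so that one round of interval arithmetic settles an entire family at once.

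For the lower bounds, I would produce explicit $\beta$-integers: writing out small integer combinations of powers of $\beta$ and reducing them with the relation defining $\beta$, one gets a sum $x+y$ whose greedy expansion is finite with exactly one fractional digit in general, and with two fractional digits for $\beta$ the root of $X^3-2X^2-X+1$, and a product $xy$ of the same type realizing three fractional digits in that case. These examples also display the extremal members of $\mathcal F_\oplus$ and $\mathcal F_\otimes$ as genuine fractional parts of a sum or product, pinning the values down from below.

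The real difficulty is executing the upper-bound step uniformly across the infinite families. It requires (i) a precise description of the $\beta$-admissible digit strings, i.e.\ of $d_\beta(1)$ as a function of $a$ and $b$ in each family, since a crude estimate for the intervals $\mathcal Z_j$ enlarges $\mathcal F_\oplus$ and would spuriously push the bound past $2$; and (ii) the resulting lattice-point enumeration together with the finiteness-and-length check for each candidate. The polynomial $X^3-2X^2-X+1$ sits exactly where the positive conjugate is largest in the admissible range, so its interval $\mathcal Z_1$ is the widest and a length-$3$ (resp.\ length-$5$) remainder just fits through; isolating this single borderline case, and checking that no other $\beta$ in the families permits a longer remainder, is the delicate end of the proof.
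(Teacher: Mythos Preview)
Your route differs from the paper's. For the upper bounds the paper does not enumerate a finite set $\mathcal F_\oplus$ of candidate fractional parts; it applies the $HK$ inequality of \cite{GuMaPe} directly. With $\beta'$ the positive conjugate and $H=\sup\{|x'|:x\in\Z_\beta\}$, one has $(1/\beta')^{L_\oplus(\beta)}<2H/K$ and $(1/\beta')^{L_\otimes(\beta)}<H^2/K$, and the decisive fact is that the constant $K$ equals $1$ whenever $\beta'\in(0,1)$: if $x=a_N\beta^N+\cdots+a_0$ with $a_0\ge1$ and all digits nonnegative, then $x'\ge a_0\ge1$. Combined with $H\le\lfloor\beta\rfloor/(1-\beta')$ and the elementary estimate $\beta'\le1/\sqrt{a-1}$ (Section~\ref{chap++}) or $\beta'\le1/\sqrt a$ (Section~\ref{chap:-+}), both coming from $|\beta\beta'\beta''|=1$, this yields closed inequalities such as $L_\oplus(\beta)<2\bigl(1+\ln 4/\ln(a-1)\bigr)$, hence $L_\oplus(\beta)\le2$ once $a$ exceeds an explicit threshold; the finitely many remaining $(a,b)$ are then dispatched by inserting the numerical value of $\beta'$ into the same inequality. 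The lower bounds are handled as you propose, by writing down explicit pairs of $\beta$-integers.

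Your lattice-point scheme is valid for any fixed $\beta$ --- indeed the paper falls back on essentially that method in Section~\ref{Chap--}, where no positive conjugate is available --- but the step you yourself flag as ``the real difficulty'' is not actually resolved in your outline. The lattice $\Z[\beta]$ and the bounding box both move with $(a,b)$, so ``one round of interval arithmetic'' cannot literally cover an infinite family; to rule out a length-$3$ remainder uniformly in $a$ you must show that any $\{z\}_\beta$ with $\ell\ge3$ fractional digits has its $\beta'$-image outside the box, and writing that condition out is exactly $(1/\beta')^\ell\le 2H/K$, i.e.\ the $HK$ bound again. What your sketch is missing is therefore the single observation $K=1$ for a positive conjugate, which collapses the whole enumeration into a one-line asymptotic and is precisely why the positive-conjugate families in the theorem are so much easier than the two-negative-conjugate case.
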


In Section~\ref{chap:+-} we we will look more closely at the remaining case with a positive conjugate, i.e., we will consider $\beta$ satisfying property $(PF)$. However, in this case, we will provide only partial solution of our question. The cubic Pisot units with two negative conjugates are examined in Section~\ref{Chap--}, in which we derive a lower bound on $L_{\oplus}(\beta)$ for wide set of such units and, futhermore, give an upper bound for a few cases applying the idea of Rauzy fractals.     

\section{Preliminaries} \label{sec:pre}

Let $\beta>1$ be a base and $\A=\{0,1,\ldots,\lceil\beta\rceil-1\}$ be an integer alphabet. Throughout this paper, we will work only with positive bases and this type of alphabet. Let $x$ be a real number. Then the $(\beta,\A)$-representation of $x$ is a sequence $(a_i)_{i=-\infty}^N$ of numbers of the alphabet such that
\[
x=\sum_{i=-\infty}^N a_i\beta^i=a_Na_{N-1}\ldots a_1a_0\bullet a_{-1}a_{-2}a_{-3}\ldots
\]
where $N\in\Z$. The number $x$ can have more than one $(\beta,\A)$-representation. We can distinguish these representations using the so-called lexicographic order, denoted by $\succ$. Let $(a_i)_{i=-\infty}^N$ and $(b_i)_{i=-\infty}^N$ be two $(\beta,\A)$-representations of $x$. We say that $(a_i)_{i=-\infty}^N$ is lexicographically greater than $(b_i)_{i=-\infty}^N$, denoted by \[(a_i)_{i=-\infty}^N\succ(b_i)_{i=-\infty}^N,\] if there exists $j\in\Z$ such that $a_i=b_i$ for all $i\in\{j+1,j+2,\ldots,N\}$ and $a_j>b_j$. 

In this paper, we focus on one particular representation, the so-called greedy expansion. We can obtain it using the following algorithm. Let $x\in[0,1)$. Set
\begin{enumerate}
\item $r_0=x$,
\item $x_j=\lfloor \beta r_{j-1}\rfloor$, $r_j=\beta r_{j-1}-\lfloor\beta r_{j-1}\rfloor$.
\end{enumerate}
Then $0\bullet x_1 x_2 x_3\ldots$ is the greedy expansion of $x$ in base $\beta$. We denote this expansion by $\langle x\rangle_{\beta}$. To get the greedy expansion of $x>1$, we firstly divide $x$ by the least power of $\beta$, say $\beta^n$, such that $\frac{x}{\beta^n}\in[0,1)$. Then we apply the algorithm on $\frac{x}{\beta^n}$ and move the fractional point in its greedy expansion to obtain the greedy expansion of $x$. 

In the case of $x<0$, we use a sign to define the greedy expansion of $x$. A number $x$ has a finite greedy expansion if it is ended with infinitely many zeros. We say that a greedy expansion is eventually periodic if it is ended with a periodical repetition of some finite sequence of digits $x_lx_{l-1}\ldots x_{L+1}x_L$. We denote this repetition by $(x_lx_{l-1}\ldots x_{L+1}x_L)^{\omega}$. 

If we set $x=1$ in the previous algorithm, the obtained string $t_1t_2t_3\ldots$ is the so-called R\'{e}nyi expansion of $1$ in base $\beta$. We denote it by $d_{\beta}(1)$. Note that we do not consider the greedy expansion of $1$, which is equal to $1$. We also use the infinite R\'{e}nyi expansion of $1$ defined by
\begin{itemize}
\item $d_{\beta}^{*}(1)=(t_1t_2\ldots t_{m-1}(t_m-1))^{\omega}$ if $d_{\beta}(1)=t_1t_2\ldots t_{m-1}t_m(0)^{\omega}$,
\item $d_{\beta}^{*}(1)=d_{\beta}(1)$ otherwise. 
\end{itemize} 
Note that $\lfloor\beta\rfloor=t_1$. This particular expansion of $1$ helps us to decide if some representation of a number $x$ is also the greedy expansion of this number. In \cite{Par}, the following theorem was proved.

\begin{theorem}[\cite{Par}]
Let $(a_i)_{i=-\infty}^N$ be a $(\beta,\A)$-representation of $x$. Then $(a_i)_{i=-\infty}^N$ is the greedy expansion of $x$ in base $\beta$ if and only if
\[
(a_i)_{i=-\infty}^J\prec t_1t_2t_3\ldots
\]
for all $J\in \Z$.
\end{theorem}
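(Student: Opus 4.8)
The statement is Parry's admissibility criterion, and I would prove it by relating the lexicographic condition to the remainders produced by the greedy algorithm. \textbf{Reductions.} First I would reduce to $x\in[0,1)$: writing $x=\beta^{N+1}y$ with $y\in[0,1)$ merely shifts all indices, and the family of tails $a_Ja_{J-1}\cdots$ is preserved up to tails that begin with extra zeros; since $t_1=\lfloor\beta\rfloor\ge1$, any tail starting with a $0$ is automatically $\prec t_1t_2\cdots$, so those positions impose no condition. Thus it suffices to treat a representation $y=\sum_{k\ge1}b_k\beta^{-k}$ with $b_k\in\A$ and to show it is the greedy expansion iff every tail $b_lb_{l+1}\cdots\prec t_1t_2\cdots$. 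The bridge between the two sides is the dictionary identity: the $j$-th greedy remainder equals $r_j=\sum_{m\ge1}b_{j+m}\beta^{-m}$, the real number represented by the tail $b_{j+1}b_{j+2}\cdots$, and ``being greedy'' is exactly the assertion $r_j\in[0,1)$ for all $j$. I would also record the elementary identity $\sum_{i\le p}t_i\beta^{-i}=1-\beta^{-p}T^{p}(1)$, where $T(z)=\beta z-\lfloor\beta z\rfloor$ and $t_p=\lfloor\beta T^{p-1}(1)\rfloor$, together with $T^{p}(1)\in[0,1)$.

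\textbf{Forward direction.} Assuming $(b_k)$ is greedy, each remainder $r_j\in[0,1)$, so every tail represents a number $<1$. I would then prove the comparison lemma: if a digit string $c_1c_2\cdots$ with $c_m\in\A$ satisfies $c_1c_2\cdots\succeq t_1t_2\cdots$, then $\sum_{m\ge1}c_m\beta^{-m}\ge1$. Indeed, letting $p$ be the first index with $c_p\ge t_p+1$ (the case $c=t$ giving value exactly $1$), the displayed identity yields
\[
\sum_{m\ge1}c_m\beta^{-m}\ge\sum_{i<p}t_i\beta^{-i}+(t_p+1)\beta^{-p}=1+\beta^{-p}\bigl(1-T^{p}(1)\bigr)>1.
\]
The contrapositive says that a tail of value $<1$ is $\prec t_1t_2\cdots$, which applied to each $r_j$ gives the Parry condition.

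\textbf{Backward direction.} Here I would use that the greedy expansion $\tilde b$ of $y$ is the lexicographically largest representation, so any representation $(b_k)$ satisfies $b\preceq\tilde b$. If $b\ne\tilde b$, let $q$ be the first index where they differ; then $b_q<\tilde b_q$, the two tails from $q$ represent the same value, and since the greedy digit is a floor one computes $\beta W=b_q+s$ with $W=\sum_{m\ge1}b_{q-1+m}\beta^{-m}$ and $s=\sum_{m\ge1}b_{q+m}\beta^{-m}$; from $\tilde b_q=\lfloor\beta W\rfloor\ge b_q+1$ it follows that $s\ge1$. Thus the tail $b_{q+1}b_{q+2}\cdots$ has real value $\ge1$ while, by hypothesis, all of its own tails are $\prec t_1t_2\cdots$. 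So the whole backward implication reduces to the converse of the comparison lemma, the \emph{crux}: a sequence all of whose tails are $\prec t_1t_2\cdots$ has value $<1$.

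\textbf{The crux and the main obstacle.} This converse is exactly where carries obstruct a one-line estimate: a \emph{single} tail being $\prec t_1t_2\cdots$ does not bound its value below $1$, because the suffix digits can accumulate almost $\lfloor\beta\rfloor/(\beta-1)$. The point is that one must use that \emph{every} tail is admissible. I would argue by extremality: set $\Sigma=\{c\in\A^{\N}:\ \text{every tail of }c\ \preceq d^{*}(\beta)\}$ (the closed condition equivalent to ``every tail $\prec t_1t_2\cdots$'' when $d_\beta(1)$ is finite), which is shift-invariant and compact. Using the self-admissibility of the R\'enyi expansion, $d^{*}(\beta)$ is the lexicographic maximum of $\Sigma$; and because greedy representations conserve the order of the reals (so the value map $\phi(c)=\sum_m c_m\beta^{-m}$ is non-decreasing for $\prec$ on $\Sigma$ and continuous), its maximum on $\Sigma$ is $\phi(d^{*}(\beta))=1$. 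Hence $\phi\le1$ on $\Sigma$, with $\phi(c)<1$ as soon as $c\ne d^{*}(\beta)$, which is guaranteed by the strict condition $\prec t_1t_2\cdots$ on every tail. This yields the crux, closes the backward direction, and completes the proof. The genuinely hard step is this extremal/monotonicity argument controlling carries; the degenerate integer-base case is irrelevant here since the bases of interest ($\beta$ a non-integer cubic Pisot unit) have maximal digit $\lfloor\beta\rfloor=t_1$.
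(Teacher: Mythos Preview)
The paper does not prove this theorem; it is quoted from Parry as a tool, so there is nothing in the paper to compare against. Your reduction to $[0,1)$, the identification of greediness with $r_j\in[0,1)$ for all $j$, and the forward-direction comparison lemma are correct and standard.

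The backward direction has a real gap at the step you flag as the crux. You argue that $\phi$ is non-decreasing for $\prec$ on $\Sigma=\{c:\text{every tail }\preceq d^*\}$ ``because greedy representations conserve the order of the reals''. But that order-preservation is a fact about the greedy coding map $[0,1)\to G\subset\A^{\N}$; to carry it to the combinatorially defined $\Sigma$ you would need $\Sigma\subseteq\overline{G}$, which is precisely the backward implication being proved. In fact monotonicity of $\phi$ on $\Sigma$ and the bound $\phi\le 1$ on $\Sigma$ are trivially equivalent (from $c\prec c'$ with first difference at $p$ one gets $\phi(c')-\phi(c)\ge\beta^{-p}\bigl(1-\phi(\sigma^{p}c)\bigr)$, so each statement follows from the other in one line), and invoking one to obtain the other is circular. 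A non-circular route is the standard block decomposition: repeatedly use $\sigma^{P_{j-1}}c\preceq d^*$ to find the first drop position $p_j$, write $c=B_1B_2\cdots$ with each block $B_j=d^*_1\cdots d^*_{p_j-1}c'_j$ and $c'_j\le d^*_{p_j}-1$; your own identity (written for $d^*$) gives $\sum_{i\le p_j}(B_j)_i\beta^{-i}\le 1-\beta^{-p_j}$, and telescoping over the blocks yields $\phi(c)\le 1$, strictly when no tail of $c$ equals $d^*$. Substituting this for the extremality paragraph closes the argument.
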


Moreover, in \cite{Bas}, Bassino has found the R\'{e}nyi expansions of $1$ for all the cubic Pisot numbers. We can also cite \cite{Aki3} as a source for $d_{\beta}(1)$ of cubic Pisot units. 

In this paper, we will work with following sets:
\[
\text{fin}(\beta)=\{x\in\R: |x| \text{ has a finite greedy expansion}\}, 
\]
\[
\Z_{\beta}=\{x\in\R: |x| \text{ has a greedy expansion of the form } a_Na_{N-1}\ldots a_0\bullet (0)^{\omega}\}.
\]
Elements of $\Z_{\beta}$ are called $\beta$-integers and are analogous to rational integers. If $\text{fin}(\beta)\cap \R_0^+$ is closed under addition, we say that $\beta$ satisfies property $(PF)$. If the whole set $\text{fin}(\beta)$ is closed under addition, then $\beta$ satisfies property $(F)$. Note that $\beta$ satisfying property $(F)$ has also property $(PF)$. It is known that $\beta$ having property $(PF)$ is necessarily a Pisot number  \cite{FrouSo}. It means that $\beta>1$ is an algebraic integer and the absolute values of all its conjugates are less than $1$. However, it is not true that all the Pisot number satisfy property $(PF)$.

We next define two numbers which we want to examine in this paper. Let
\[
L_{\oplus}(\beta)=\min\{L\in\N_{0}:\text{for all }x,y\in\Z_{\beta},x+y\in\text{fin}(\beta)\Rightarrow \beta^{L}(x+y)\in\Z_{\beta}\},
\]
\[
L_{\otimes}(\beta)=\min\{L\in\N_{0}:\text{for all }x,y\in\Z_{\beta},xy\in\text{fin}(\beta)\Rightarrow \beta^{L}(xy)\in\Z_{\beta}\}.
\]
Since $\text{fin}(\beta)$ is not always closed under addition, i.e., sum or product of two elements of $\text{fin}(\beta)$ can have a greedy expansion with infinitely many nonzero digits, the conditions  $x+y\in\text{fin}(\beta)$ and $xy\in\text{fin}(\beta)$ are crucial in these definitions. If $\beta$ is a Pisot number, then $L_{\oplus}$ and $L_{\otimes}$ are finite \cite{FrouSo,GuMaPe}. Moreover, in the case of Pisot numbers, we also know that there exists a finite set $F$ such that $\Z_{\beta}+\Z_{\beta}\subset \Z_{\beta}+F$ and $\Z_{\beta}\Z_{\beta}\subset\Z_{\beta}+F$.

\begin{figure}
\flushleft

\begin{tikzpicture}[scale=0.55]   
\fill[mygray!30] (-0.25,-0.5)--(5.25,5)--(5.25,-5.25)--(4.5,-5.25)--cycle;
\fill[red!30] 
(5.25,0.75)--(1,0.75)--(-0.25,-0.5)--(4.5,-5.25)--(5.25,-5.25)--cycle;
\draw[step=0.5, lightgray, line width = 0.01] (-5.25,-5.25) grid (5.25,5.25);
\draw[->] (-5.25,0) -- (5.5,0) coordinate () node[anchor=north] {$a$};
\draw[->] (0,-5.25) -- (0,5.5) coordinate () node[anchor=east] {$b$};
\draw (0.5,0) node[above] {\scriptsize $\bm{1}$}
      (0,-0.5) node[left] {\scriptsize $\bm{-1}$};
\foreach \b in {2,...,9}{\DrawBullets{\b}{10}{\b+1}{mygray}}
\foreach \b in {0,...,1}{\DrawBullets{\b}{10}{\b+1}{red}}
\foreach \b in {-10,...,-1}{\DrawBullets{\b}{10}{-\b-1}{red}}  
\end{tikzpicture}
\begin{tikzpicture}[scale=0.55]
\fill[mygray!30] (0.75,-0.5)--(5.25,4)--(5.25,-5)--cycle;
\fill[green!30] 
(1,-0.25)--(5.25,4)--(5.25,-0.25)--cycle;      
\draw[step=0.5, lightgray, line width = 0.01] (-5.25,-5.25) grid (5.25,5.25);
\draw[->] (-5.25,0) -- (5.5,0) coordinate () node[anchor=north] {$a$};
\draw[->] (0,-5.25) -- (0,5.5) coordinate () node[anchor=east] {$b$};
\draw	(1,0) node[above] {\scriptsize $\bm{2}$}
        (0,-0.5) node[left] {\scriptsize $\bm{-1}$};
\foreach \b in {0,...,7}{\DrawBullets{\b}{10}{\b+3}{green}}
\foreach \b in {-9,...,-1}{\DrawBullets{\b}{10}{-\b+1}{mygray}}        
\end{tikzpicture}
\flushleft
\begin{tikzpicture}[scale=0.25]
\fill[red]
(-5,0.5)--(-5,-0.5)--(-4,-0.5)--(-4,0.5)--cycle;
\draw (0,0) node[red] {\scriptsize Property $(F)$};  
\end{tikzpicture}
\begin{tikzpicture}[scale=0.25]
\fill[green]
(-5,0.5)--(-5,-0.5)--(-4,-0.5)--(-4,0.5)--cycle;
\draw (0,0) node[green] {\scriptsize  Property $(PF)$};  
\end{tikzpicture}
\caption{Coefficients of the minimal polynomial $X^3-aX^2+bX+c$ of cubic Pisot units for $c=-1$ and $c=1$, distinguished by the properties $(F)$ and $(PF)$.}
\label{Fig:1}
\end{figure}
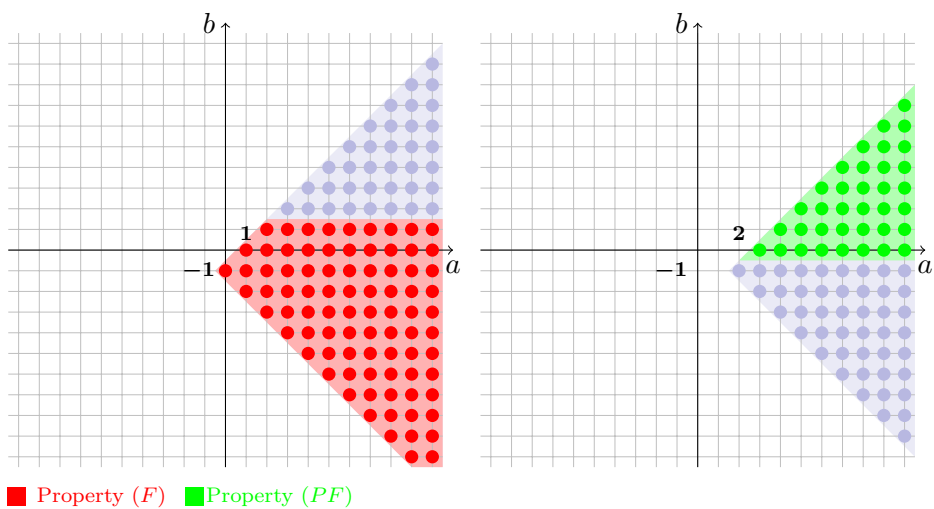

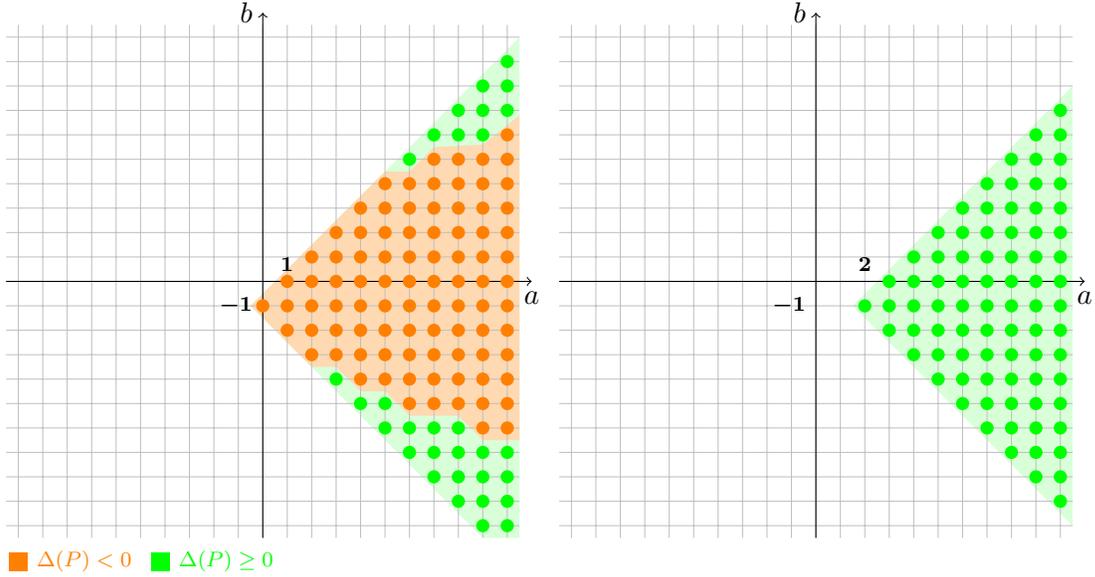
\begin{figure}
\centering
\begin{tikzpicture}[scale=0.65]    
\fill[green!15] (-0.25,-0.5)--(5.25,5)--(5.25,-5.25)--(4.5,-5.25)--cycle;
\fill[orange!30]
(5.25,0.5)--(5.25,3.4)--(4.5,2.8)--(3.5,2.75)--(3,2.25)--(2.5,2.25)--(2,1.75)--(-0.25,-0.5)--(1,-1.75)--(1.5,-1.75)--(2,-2.25)--(2.5,-2.25)--(3,-2.75)--(4,-2.75)--(4.5,-3.25)--(5.25,-3.25)--cycle;
\draw[step=0.5, lightgray, line width = 0.01] (-5.25,-5.25) grid (5.25,5.25);
\draw[->] (-5.25,0) -- (5.5,0) coordinate () node[anchor=north] {$a$};
\draw[->] (0,-5.25) -- (0,5.5) coordinate () node[anchor=east] {$b$};
\draw (0.5,0) node[above] {\scriptsize $\bm{1}$}
      (0,-0.5) node[left] {\scriptsize $\bm{-1}$};
\foreach \b in {0,...,1}{\DrawBullets{\b}{10}{\b+1}{orange}}
\foreach \b in {-3,...,-1}{\DrawBullets{\b}{10}{-\b-1}{orange}}
\DrawBullets{2}{10}{3}{orange}
\DrawBullets{3}{10}{4}{orange}
\DrawBullets{4}{10}{5}{orange}
\DrawBullets{5}{10}{7}{orange}
\DrawBullets{6}{10}{10}{orange}
\DrawBullets{9}{10}{10}{green}
\DrawBullets{8}{10}{9}{green}
\DrawBullets{7}{10}{8}{green}
\DrawBullets{6}{9}{7}{green}
\DrawBullets{5}{6}{6}{green}
\DrawBullets{-4}{10}{4}{orange} 
\DrawBullets{-5}{10}{6}{orange}
\DrawBullets{-6}{10}{9}{orange}
\DrawBullets{-4}{3}{3}{green}
\DrawBullets{-5}{5}{4}{green}
\DrawBullets{-6}{8}{5}{green}
\DrawBullets{-7}{10}{6}{green}
\DrawBullets{-8}{10}{7}{green}
\DrawBullets{-9}{10}{8}{green}
\DrawBullets{-10}{10}{9}{green}      
\end{tikzpicture}
\begin{tikzpicture}[scale=0.65]
\fill[green!15] (0.75,-0.5)--(5.25,4)--(5.25,-5)--cycle;     
\draw[step=0.5, lightgray, line width = 0.01] (-5.25,-5.25) grid (5.25,5.25);
\draw[->] (-5.25,0) -- (5.5,0) coordinate () node[anchor=north] {$a$};
\draw[->] (0,-5.25) -- (0,5.5) coordinate () node[anchor=east] {$b$};
\draw	(1,0) node[above] {\scriptsize $\bm{2}$}
        (0,-0.5) node[left] {\scriptsize $\bm{-1}$};
\foreach \b in {-1,...,7}{\DrawBullets{\b}{10}{\b+3}{green}}
\foreach \b in {-9,...,-2}{\DrawBullets{\b}{10}{-\b+1}{green}}        
\end{tikzpicture}
\begin{tikzpicture}[scale=0.25]
\fill[orange]
(-5,0.5)--(-5,-0.5)--(-4,-0.5)--(-4,0.5)--cycle;
\draw (-1,0) node[orange] {\scriptsize $\Delta(P)<0$};  
\end{tikzpicture}
\begin{tikzpicture}[scale=0.25]
\fill[green]
(-5,0.5)--(-5,-0.5)--(-4,-0.5)--(-4,0.5)--cycle;
\draw (-1,0) node[green] {\scriptsize $\Delta(P)\geq0$};  
\end{tikzpicture}
\begin{tikzpicture}[scale=0.25]
\fill[white]
(-5,0.5)--(-5,-0.5)--(-4,-0.5)--(-4,0.5)--cycle;
\draw (0,0) node[white] {\scriptsize mmmmmmmmmmmmmmmmmmmmmmmmmmmmmmmmmmmmmmmmmm};  
\end{tikzpicture}
\caption{Coefficients of the minimal polynomial $X^3-aX^2+bX+c$ of cubic Pisot units for $c=-1$ and $c=1$, distinguished by the discriminant.}
\label{Fig:2}
\end{figure}

Nearly sharp bounds on $L_{\oplus}(\beta)$ have been found for all quadratic Pisot numbers. Our goal in this paper is to determine them for some cubic Pisot unit bases. In \cite{Aki3}, Akiyama characterized all the cubic Pisot numbers, we show the results only for units.

\begin{theorem}[\cite{Aki3}]
A number $\beta$ is a cubic Pisot unit if and only if $\beta$ is the dominant root of the polynomial 
\[
P(X)=X^3-aX^2+bX-1
\]
where $|b+1|<a+1$, or of the polynomial
\[
P(X)=X^3-aX^2+bX+1
\]
where $|b+1|<a-1$.
\end{theorem}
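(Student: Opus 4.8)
The plan is to collapse the two families of inequalities into a single uniform sign condition on $P$, and then prove that the Pisot property is \emph{equivalent} to that sign condition. The linchpin observation is that in both families the stated inequality is nothing but
\[
P(1)<0 \quad\text{and}\quad P(-1)<0.
\]
Indeed, a direct evaluation gives $P(1)=b-a$, $P(-1)=-(a+b+2)$ when the constant term is $-1$, and $P(1)=b-a+2$, $P(-1)=-(a+b)$ when it is $+1$; in each case $P(1)<0$ is exactly the right-hand half of $|b+1|<a\pm1$ and $P(-1)<0$ the left-hand half. So it suffices to show that a monic cubic $P(X)=X^3-aX^2+bX-c$ with $a,b\in\Z$ and $c=\pm1$ is the minimal polynomial of a cubic Pisot unit, with $\beta$ its dominant root, if and only if $P(1)<0$ and $P(-1)<0$.

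First I would settle the algebraic bookkeeping. A cubic Pisot unit has a monic integer minimal polynomial of degree three whose constant term is $\pm1$, since the norm (the product of the roots) is a unit; writing it as $X^3-aX^2+bX-c$ forces $c=\pm1$, which gives precisely the two displayed families. For the converse I would note that the sign conditions force $P(\pm1)\neq0$, and since any rational root of a monic integer polynomial with constant term $\pm1$ must itself be $\pm1$, the polynomial has no rational root and hence, being a cubic, is irreducible over $\mathbb{Q}$; its dominant root therefore has degree exactly three, and it is a unit by the constant term.

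The heart of the argument is the equivalence ``sign conditions $\Leftrightarrow$ Pisot,'' which I would run through the complementary quadratic factor. Since $P\to+\infty$ at $+\infty$ and $P(1)<0$, there is a real root $\beta>1$; factor $P(X)=(X-\beta)Q(X)$ with $Q(X)=X^2-sX+p$ of real coefficients. From $P(0)=-c$ one gets $\beta p=c$, hence $|p|=1/\beta<1$, so the product of the two remaining roots already lies strictly inside the unit disk. The identities $P(1)=(1-\beta)Q(1)$ and $P(-1)=(-1-\beta)Q(-1)$, together with $1-\beta<0$ and $-1-\beta<0$, turn the hypotheses $P(\pm1)<0$ into $Q(1)>0$ and $Q(-1)>0$. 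The forward direction is the mirror image: if $\beta$ is Pisot then $Q(1)=(1-\beta_2)(1-\beta_3)$ and $Q(-1)=(1+\beta_2)(1+\beta_3)$ are positive (a product of positive reals when the conjugates lie in $(-1,1)$, and $|1\mp\beta_2|^2>0$ when they form a complex pair), so $P(\pm1)<0$ follows.

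It remains to show that $|p|<1$ together with $Q(1)>0$ and $Q(-1)>0$ forces \emph{both} roots of $Q$ into the open unit disk; this quadratic root-location step is where I expect the only real friction, and it is exactly the discrete-time (Schur--Cohn/Jury) stability criterion in degree two. I would give it by an elementary case split: if $Q$ has a complex-conjugate pair, their common modulus is $\sqrt p<1$ and we are done; if $Q$ has real roots $\mu_1\le\mu_2$, then $Q(1)>0$ rules out $1\in(\mu_1,\mu_2)$ while $\mu_1\mu_2=p$ with $|p|<1$ excludes both roots being $\ge1$, forcing $\mu_2<1$, and symmetrically $Q(-1)>0$ with $|p|<1$ forces $\mu_1>-1$. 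Hence both conjugates have modulus $<1$, so $\beta$ is the strictly dominant root and is Pisot. Assembling the two directions with the reformulation of the first paragraph yields the theorem; the main obstacle throughout is simply packaging this quadratic criterion so that it applies uniformly to both unit families at once.
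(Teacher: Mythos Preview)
The paper does not prove this theorem: it is stated with attribution to Akiyama \cite{Aki3} and used as input for the rest of the article, so there is no in-paper argument to compare your proposal against.

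That said, your proof is correct and self-contained. The reformulation of both coefficient inequalities as $P(1)<0$ and $P(-1)<0$ is the key simplification, and your verification of it is accurate in each case. The irreducibility step (no rational root since $P(\pm1)\neq 0$ and any rational root of a monic integer cubic with constant term $\pm1$ must be $\pm1$) is clean. The root-location argument via the complementary quadratic factor $Q$---translating $P(\pm1)<0$ to $Q(\pm1)>0$ using the sign of $\pm1-\beta$, then invoking $|p|=1/\beta<1$ together with the elementary Schur--Cohn case split on real versus complex roots---goes through exactly as you describe. The forward direction is equally sound: for real conjugates in $(-1,1)$ both factors in $(1\mp\beta_2)(1\mp\beta_3)$ are positive, while for a complex pair one gets $|1\mp\beta_2|^2>0$. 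Nothing is missing; the only remark is that your argument also implicitly shows uniqueness of the root greater than $1$ (since after factoring it out the remaining roots lie in the open unit disk), so the phrase ``dominant root'' in the statement is justified.
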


We also know which cubic Pisot units satisfy properties $(PF)$ and $(F)$ \cite{Aki3,Aki2}, see Figure~\ref{Fig:1}. They can be also divided into two parts according to the discriminant $\Delta(P)$, see Figure~\ref{Fig:2}. If $\Delta(P)\geq 0$, the polynomial $P$ has three real roots, otherwise it has one real and two complex roots. In our paper, we focus on the first case.  

If we want to derive a lower bound on $L_{\oplus}(\beta)$ or $L_{\otimes}(\beta)$, one way is to find an example of two $\beta$-integers whose sum has a suitable greedy expansion. On the other hand, we can apply several different methods to find an upper bound. Except for Chapter \ref{Chap--}, we use the so-called $HK$ method or its modification. Let $\beta'$ be a Galois conjugate of $\beta$. Set
\[
H=\sup\{|x'|:x\in\Z_{\beta}\},
\]
\[
K=\sup\{|x'|:x\in\Z_{\beta}\setminus \beta\Z_{\beta}\},
\]
where $'$ is the isomorphism between $\Q(\beta)$ and $\Q(\beta')$. Note that the values of $H$ and $K$ depend on the choice of the conjugate $\beta'$. Thus, in this paper, we always specify which conjugate we consider. If $|\beta'|<1$ and $K>0$, then 
\[
\left(\frac{1}{|\beta'|}\right)^{L_{\oplus}(\beta)}<\frac{2H}{K}
\]
and 
\[
\left(\frac{1}{|\beta'|}\right)^{L_{\otimes}(\beta)}<\frac{H^2}{K},
\]
see \cite{GuMaPe}. We also know that if $|\beta'|<1$, the number $H$ can be estimated as $H\leq\frac{\lfloor\beta\rfloor}{1-|\beta'|}$. Moreover, if $\beta'\in(0,1)$, then $K=1$.

\section{Case with two positive conjugates} \label{chap++}

In the $HK$ method, it is convenient to have a base with a positive conjugate, which simplifies some steps in this estimation. Actually, we know examples of cubic Pisot unit bases which have even two positive conjugate, see \cite{MaTi}. In this case, we work with polynomials of the form
\[
P(X)=X^3-aX^2+bX-1
\]
where $2\leq b<a$ and the dicriminant $\Delta(P)$ is nonnegative. We also know that $a\geq 6$, which can be deduced from the discriminant of cubic polynomials.

These bases do not satisfy either property $(F)$ or property $(PF)$, which implies that a sum of two numbers of $\text{fin}(\beta)$ may have an infinite greedy expansion. In \cite{Bas, Aki3}, the authors have found the R\'{e}nyi expansion of $1$ of the form $d_{\beta}(1)=(a-1)(a-b-1)(a-b)^{\omega}$, thus the alphabet is $\A=\{0,1,\ldots,a-1\}$.

First of all, we will obtain a lower bound on $L_{\oplus}(\beta)$.  

\begin{lemma} \label{lem:low++}
Let $\beta$ be the dominant root of the polynomial
\[
P(X)=X^3-aX^2+bX-1
\]
where $\Delta(P)\geq 0$ and $2\leq b<a$. Then $L_{\oplus}(\beta)\geq 1$.
\end{lemma}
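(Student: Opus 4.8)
The plan is to exhibit an explicit pair $x,y\in\Z_\beta$ with $x+y\in\text{fin}(\beta)\setminus\Z_\beta$: this already gives $L_{\oplus}(\beta)\geq 1$, since $\beta^{0}(x+y)=x+y$, so $L_{\oplus}(\beta)=0$ would force every such $x+y$ into $\Z_\beta$. The tools I would use throughout are: the R\'enyi expansion $d_\beta(1)=(a-1)(a-b-1)(a-b)^{\omega}$ (Bassino/Akiyama), so that the alphabet is $\{0,1,\dots,a-1\}$ and admissibility of a finite digit string is decided by Parry's criterion against $(a-1)(a-b-1)(a-b)^{\omega}$; and the defining identity $\beta^{3}=a\beta^{2}-b\beta+1$, equivalently $\beta^{-1}=\beta^{2}-a\beta+b$, $\beta^{2}=(a-1)\beta+(a-b-1)+\tfrac{a-b}{\beta-1}$, and the shifted form $a\beta^{m+1}=\beta^{m+2}+b\beta^{m}-\beta^{m-1}$.

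First I would search for a target value $S\in\text{fin}(\beta)\setminus\Z_\beta$ that is visibly a sum of two $\beta$-integers (possibly with one of $x,y$ negative, which is permitted since $\Z_\beta$ is defined through $|x|$). The most economical candidates have greedy expansion ending in a single nonzero digit, e.g.\ $S=\beta^{N}+\beta^{-1}$ with greedy expansion $1\,0^{N}\!\bullet\!1\,0^{\omega}$, which is finite and admissible for $N\geq 2$ and is not a $\beta$-integer. To realise such an $S$ (or a small variant with two or three fractional digits) as $x+y$ I would substitute $\beta^{-1}=\beta^{2}-a\beta+b$ and try to split $\beta^{N}+\beta^{2}+b-a\beta$ into two nonnegative admissible pieces, using the shifted identity $a\beta^{m+1}=\beta^{m+2}+b\beta^{m}-\beta^{m-1}$ to push the offending term $-a\beta$ up into a high power of $\beta$ so that the residual $-1$ lands on a position that already carries a positive digit. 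Concretely I expect the witness to be built from the maximal admissible blocks $(a-1)(a-b-1)(a-b)^{k}$ together with a small correction, arranged so that the only carry produced by the digitwise addition of $x$ and $y$ is the one depositing a single $1$ after the radix point.

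The verification then has two parts: (i) show $x,y\in\Z_\beta$ by checking, via Parry's criterion, that each of their digit strings is lexicographically below $(a-1)(a-b-1)(a-b)^{\omega}$ at every position; and (ii) show that $\langle x+y\rangle$ is exactly the claimed finite string, by writing down an explicit finite $(\beta,\A)$-representation of $x+y$ and again verifying admissibility position by position. I would also treat the extreme sub-cases $b=2$ and $b=a-1$ separately and keep track of where $a\geq 6$ is actually used.

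The main obstacle is step (ii), the finiteness of $\langle x+y\rangle$. Because these bases satisfy neither $(F)$ nor $(PF)$, a ``generic'' carry that descends below the units position does not terminate: after applying $a\beta^{2}=\beta^{3}+b\beta-1$ one is left with a defect $-1$ at a position whose digit is $0$, and running the greedy algorithm from there reproduces the eventually periodic tail coming from $\langle 1-\beta^{-1}\rangle=0\bullet(a-2)(a-b-1)(a-b)^{\omega}$, i.e.\ an infinite expansion. The entire content of the lemma is therefore to pick $x$ and $y$ so that the carry is forced to stop after a single nonzero fractional digit, and the delicate point is the admissibility bookkeeping that confirms this; the rest of the argument is routine.
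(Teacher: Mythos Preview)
Your overall strategy is the right one, and you correctly diagnose the difficulty: since $\beta$ has neither $(F)$ nor $(PF)$, a generic overflow at the units place cascades into the periodic tail $(a-2)(a-b-1)(a-b)^{\omega}$ and does not terminate, so the witness must be chosen so that the carry stops. However, your proposal never actually produces such a witness. You describe a search (``I would search for\dots''), a hoped-for shape (``I expect the witness to be built from\dots''), and a verification template, but you explicitly leave step~(ii) --- the only nontrivial step --- unresolved. As written, this is an outline, not a proof; the lemma is precisely the existence of the pair you have not yet exhibited.

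The paper's argument is the same strategy carried to completion with a very short explicit choice: take $\langle x\rangle_\beta=(a-1)00b$ and $\langle y\rangle_\beta=2000$. One rewriting with $\beta^{4}=a\beta^{3}-b\beta^{2}+\beta$ turns $(a+1)\beta^{3}+b$ into $\beta^{4}+\beta^{3}+b\beta^{2}-\beta+b$; a second rewriting with $\beta^{2}=a\beta-b+\beta^{-1}$ at position $2$ replaces the offending $-\beta$ and produces $\langle x+y\rangle_\beta=11(b-1)(a-1)0\bullet 1$. The carry terminates because the digit $b$ was deliberately placed at position $0$ in $x$, so the $-b$ generated by the second rewriting is absorbed exactly; this is the ``trick'' your outline is looking for. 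Admissibility of all three strings against $(a-1)(a-b-1)(a-b)^{\omega}$ is then a routine check using $2\leq b<a$ and $a\geq 6$.
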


\begin{proof}
In this case, the R\'enyi expansion of $1$ has the form $(a-1)(a-b-1)(a-b)^{\omega}$. Moreover, we know that $a-b-1\geq 0$, $a-b\geq 1$ and $a\geq 6$. Set $\langle x\rangle_{\beta}=(a-1)00b$ and $\langle y\rangle_{\beta}= 2000$; these strings are greedy expansions of $\beta$-integers $x$ and $y$. Now we deduce the greedy expansion of their sum. We heavily use the fact that $\beta$ is a root of $X^3-aX^2+bX-1$. 
\[
\begin{matrix}
 &a-1 & 0 & 0 & b & \bullet & \\
 & 2 & 0 & 0 & 0 & \bullet & \\
 \hline
 & a+1 & 0 & 0 & b & \bullet & \\
 1 & -a & b & -1 &  &  & \\
 \hline
 1 & 1 & b & -1 & b & \bullet & \\
  &  & -1 & a & -b & \bullet & 1 \\
\hline
 1 & 1 & b-1 & a-1 & 0 & \bullet & 1 \\  
\end{matrix}
\]
Thus, since this string is lexicographically smaller than the infinite R\'enyi expansion of $1$, we have $\langle x+y\rangle_{\beta}=11(b-1)(a-1)0\bullet 1$. Therefore $L_{\oplus}(\beta)\geq 1$.  
\end{proof}

In the following lemma, we derive a useful estimate of one of our positive conjugates. As we can see in other parts of this paper, similar procedure is used repeatedly. 

\begin{lemma} \label{lemma:root1}
Let $\beta$ be the dominant root of the polynomial
\[
P(X)=X^3-aX^2+bX-1
\]
where $\Delta(P)\geq 0$ and $2\leq b<a$. Then one of its conjugates is less than or equal to $\frac{1}{\sqrt{a-1}}$.
\end{lemma}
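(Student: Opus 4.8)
The plan is to exploit the fact that $\beta$ is a cubic Pisot unit, so its three roots $\beta,\beta',\beta''$ satisfy $\beta\beta'\beta''=1$ (the constant term is $-1$ up to sign) and $\beta+\beta'+\beta''=a$, $\beta\beta'+\beta\beta''+\beta'\beta''=b$. Since $\Delta(P)\ge 0$, all three roots are real; since $\beta>1$ is Pisot, the other two satisfy $|\beta'|,|\beta''|<1$, and because the product of all three is $1$ with $\beta>1$, the conjugates $\beta',\beta''$ must have the same sign, hence (in the two-positive-conjugate regime discussed here) both lie in $(0,1)$. So the goal is to show $\min(\beta',\beta'')\le \tfrac{1}{\sqrt{a-1}}$.

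First I would write $\beta'\beta''=\dfrac{1}{\beta}$ from the unit relation $\beta\beta'\beta''=1$. Thus it suffices to bound $\beta$ from below by $a-1$, because then $\beta'\beta''=1/\beta\le 1/(a-1)$, and the smaller of the two conjugates is at most the square root of their product, i.e. $\min(\beta',\beta'')\le\sqrt{\beta'\beta''}\le\tfrac{1}{\sqrt{a-1}}$. So the real content is the estimate $\beta\ge a-1$. To get this, evaluate $P$ at $X=a-1$: $P(a-1)=(a-1)^3-a(a-1)^2+b(a-1)-1=(a-1)^2(a-1-a)+b(a-1)-1=-(a-1)^2+b(a-1)-1=(a-1)(b-(a-1))-1=(a-1)(b-a+1)-1$. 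Since $b<a$ we have $b-a+1\le 0$, so $P(a-1)=(a-1)(b-a+1)-1<0$. As $P(X)\to+\infty$ and $\beta$ is the largest root with $P$ eventually positive beyond it, $P(a-1)<0$ forces $\beta>a-1$ (here one should note $a-1>0$ since $a\ge 6$, and that $\beta$ being the dominant root means $P>0$ on $(\beta,\infty)$, hence any point where $P<0$ lies below $\beta$). This completes the argument.

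The only subtlety — and the step I would phrase most carefully — is the passage from "$\min(\beta',\beta'')\le\sqrt{\beta'\beta''}$" together with "$\beta'\beta''\le\tfrac{1}{a-1}$": this uses that $\beta',\beta''$ are both positive (so their product is positive and the smaller one is genuinely at most the geometric mean) rather than merely bounded in absolute value. I would therefore open with a short paragraph recording that, under $\Delta(P)\ge 0$ and the Pisot property, all roots are real and the two conjugates have the same sign as each other; in the ``two positive conjugates'' context of Section~\ref{chap++} this sign is $+$, but even without assuming that explicitly, $|\beta'||\beta''|=1/\beta$ and $\min(|\beta'|,|\beta''|)\le 1/\sqrt\beta\le 1/\sqrt{a-1}$, which gives the claim for whichever conjugate is meant. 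I do not expect any real obstacle; the proof is a one-line evaluation of $P$ at $a-1$ plus the elementary inequality between the minimum of two positive numbers and their geometric mean.
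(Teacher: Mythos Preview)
Your proof is correct and follows essentially the same route as the paper: both use $\beta\beta'\beta''=1$ together with $\beta>a-1$ to force $\beta'\beta''<\tfrac{1}{a-1}$, whence the smaller conjugate is at most $\tfrac{1}{\sqrt{a-1}}$. The only cosmetic differences are that the paper invokes $\lfloor\beta\rfloor=a-1$ (from the R\'enyi expansion) rather than your direct evaluation $P(a-1)<0$, and phrases the last step as a contradiction instead of the geometric-mean inequality.
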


\begin{proof}
Since $d_{\beta}(1)=(a-1)(a-b-1)(a-b)^{\omega}$, we have $\lfloor\beta\rfloor=a-1$. Denote by $\beta'$ and $\beta''$ two positive conjugates of $\beta$. To obtain a contradiction, assume that both these conjugates are greater than $\frac{1}{\sqrt{a-1}}$. It follows that
\[
1=\beta\beta'\beta''>(a-1)\frac{1}{\sqrt{a-1}}\frac{1}{\sqrt{a-1}}=1,
\]
which is impossible.
\end{proof}

Applying the $HK$ method, we obtain an upper bound on $L_{\oplus}(\beta)$. We use the estimate found in the previous lemma, which shows that one of positive conjugates is small for large values of $a$.

\begin{prop} \label{prop:apr+++}
Let $\beta$ be the dominant root of the polynomial
\[
P(X)=X^3-aX^2+bX-1
\]
where $\Delta(P)\geq 0$ and $2\leq b<a$. Then $1\leq L_{\oplus}(\beta) \leq 2$ for sufficiently large $a$.
\end{prop}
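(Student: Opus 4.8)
The plan is to apply the $HK$ method from Section~\ref{sec:pre} together with the conjugate estimate of Lemma~\ref{lemma:root1}. Denote by $\beta'$ and $\beta''$ the two positive conjugates of $\beta$, chosen so that $\beta'$ is the small one, $\beta' \leq \frac{1}{\sqrt{a-1}}$. Since $\beta' \in (0,1)$, we know $K = 1$, and the bound $H \leq \frac{\lfloor\beta\rfloor}{1-\beta'} = \frac{a-1}{1-\beta'}$ applies; since $\beta' \leq \frac{1}{\sqrt{a-1}}$, for large $a$ we get $H \leq \frac{a-1}{1-1/\sqrt{a-1}} = O(a)$. The $HK$ inequality then gives $\left(\frac{1}{\beta'}\right)^{L_{\oplus}(\beta)} < \frac{2H}{K} = 2H = O(a)$. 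The lower bound $L_{\oplus}(\beta) \geq 1$ is immediate from Lemma~\ref{lem:low++}, so only the upper bound requires work.

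First I would make the estimate $\frac{1}{\beta'} \geq \sqrt{a-1}$ explicit: this follows directly from $\beta' \leq \frac{1}{\sqrt{a-1}}$. Hence if $L_{\oplus}(\beta) \geq 3$, then $(a-1)^{3/2} \leq \left(\frac{1}{\beta'}\right)^{3} < 2H$. I would combine this with an explicit upper bound on $H$. Using $H \leq \frac{a-1}{1-\beta'}$ and noting $\beta' \leq \frac{1}{\sqrt{a-1}} \leq \frac{1}{\sqrt{5}}$ once $a \geq 6$, we obtain $H \leq \frac{a-1}{1-1/\sqrt{a-1}}$, which for $a$ large is at most, say, $2(a-1)$ (indeed $1 - 1/\sqrt{a-1} \geq 1/2$ as soon as $a-1 \geq 4$). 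So $L_{\oplus}(\beta) \geq 3$ would force $(a-1)^{3/2} < 4(a-1)$, i.e. $(a-1)^{1/2} < 4$, i.e. $a < 17$. Thus for all $a \geq 17$ (or whatever explicit threshold the sharpened constants yield), we conclude $L_{\oplus}(\beta) \leq 2$, which is exactly the ``sufficiently large $a$'' claim.

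The step I expect to be the main obstacle is pinning down the bound on $H$ tightly enough — and, more subtly, making sure that one really may take $\beta'$ (rather than the other conjugate $\beta''$) as the relevant coordinate in the $HK$ method, or else that the argument works with whichever conjugate Lemma~\ref{lemma:root1} controls. The $HK$ inequality as stated is for a fixed choice of Galois conjugate $\beta'$ with $|\beta'| < 1$; here both conjugates lie in $(0,1)$, so either works, but the bound is strongest when $\frac{1}{|\beta'|}$ is largest, i.e. for the smallest conjugate, which is precisely the one Lemma~\ref{lemma:root1} bounds below (in $1/\beta'$). One should also double-check that $K = 1$ genuinely holds — this is guaranteed by the cited fact ``$\beta' \in (0,1) \Rightarrow K = 1$'' — and that the estimate $H \leq \frac{\lfloor\beta\rfloor}{1-\beta'}$ is being applied with the same conjugate. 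A minor secondary point is simply to record an explicit value of $a$ beyond which the inequality $(a-1)^{1/2} < 4$ (after absorbing all constants) fails, so that the phrase ``sufficiently large $a$'' is honest; the computation is routine once the constants in the $H$-bound are fixed.
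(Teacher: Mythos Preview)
Your proposal is correct and follows essentially the same approach as the paper's proof: both apply the $HK$ method with $K=1$, bound $H\leq 2(a-1)$ via $\beta'\leq\frac{1}{\sqrt{a-1}}\leq\frac12$, and combine with $\frac{1}{\beta'}\geq\sqrt{a-1}$ to force $L_{\oplus}(\beta)\leq 2$ for large $a$. The only cosmetic difference is that the paper takes logarithms to obtain $L_{\oplus}(\beta)<2\bigl(1+\frac{\ln 4}{\ln(a-1)}\bigr)$, whereas you argue by contradiction assuming $L_{\oplus}(\beta)\geq 3$; both routes yield the same explicit threshold $a\geq 17$.
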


\begin{proof}
Lemma \ref{lem:low++} gives us the lower bound on $L_{\oplus}(\beta)$. Let $\beta'$ be the conjugate of $\beta$ such that $\beta'\leq \frac{1}{\sqrt{a-1}}$, whose existence is guaranteed by the previous lemma. Since $\beta'>0$, we have $K=1$ in the $HK$ method. Our next claim is that
\[
H\leq\frac{\lfloor\beta\rfloor}{1-\beta'}\leq\frac{a-1}{1-\frac{1}{2}}=2(a-1).
\]
Thus, we deduce that
\[
\left(\frac{1}{\beta'}\right)^{L_{\oplus}(\beta)}<\frac{2H}{K}\leq 4(a-1).
\]
Moreover, we can find a lower bound on $\left(\frac{1}{\beta'}\right)^{L_{\oplus}(\beta)}$ as
\[
(\sqrt{a-1})^{L_{\oplus}(\beta)}\leq \left(\frac{1}{\beta'}\right)^{L_{\oplus}(\beta)} 
\]
and get
\[
(\sqrt{a-1})^{L_{\oplus}(\beta)}<4(a-1).
\]  
Applying the function $\ln$ and rearranging the inequality we obtain
\[
L_{\oplus}(\beta)<\frac{\ln(4(a-1))}{\ln\sqrt{a-1}}=2\left(1+\frac{\ln4}{\ln(a-1)}\right).
\]
Therefore, we have $L_{\oplus}(\beta)\leq2$ for large $a$.
\end{proof}

\begin{remark}
Replacing $\frac{1}{2}$ by $\frac{1}{3}$ in the previous proof we see that
\[
L_{\oplus}(\beta)<2\left(1+\frac{\ln3}{\ln(a-1)}\right).
\]
Therefore $L_{\oplus}(\beta)\leq 2$ for $a\geq 10$. The cases where $a<10$ are treated in the following example.
\end{remark}

\begin{example} \label{ex:+++}
We have finitely many bases of this type on which we have not decided yet. However, if we use an approximate value of our conjugates in the $HK$ method, we get the estimates given in Table \ref{tab:1}.
\begin{table}[h]
\centering 
\begin{tabular}{|c|c|}
\hline
Polynomial & $L_{\oplus}(\beta)$\\
\hline
$X^3-6X^2+5X-1$ & $\leq 2$\\
\hline
$X^3-7X^2+6X-1$ & $1$\\
\hline
$X^3-8X^2+6X-1$ & $\leq 2$\\
\hline
$X^3-9X^2+6X-1$ & $\leq 2$\\
\hline
\end{tabular}
\quad
\begin{tabular}{|c|c|}
\hline
Polynomial & $L_{\oplus}(\beta)$\\
\hline
$X^3-8X^2+7X-1$ & $1$\\
\hline
$X^3-9X^2+7X-1$ & $1$\\
\hline
$X^3-9X^2+8X-1$ & $1$\\
\hline
\end{tabular}
\caption{$L_{\oplus}(\beta)$ for $a\leq 9$ and two positive conjugates} \label{tab:1}
\end{table}
 
\end{example}

Summaring the results of Proposition \ref{prop:apr+++} and Example \ref{ex:+++}, we obtain the following statement.

\begin{prop} \label{prop:+++}
Let $\beta$ be the dominant root of the polynomial
\[
P(X)=X^3-aX^2+bX-1
\]
where $\Delta(P)\geq 0$ and $2\leq b<a$. Then $1\leq L_{\oplus}(\beta) \leq 2$.
\end{prop}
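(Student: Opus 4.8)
The plan is to combine the two results already established in the excerpt. Proposition~\ref{prop:apr+++} (together with the subsequent remark) handles all bases with $a\geq 10$, giving $1\leq L_{\oplus}(\beta)\leq 2$ outright, and Example~\ref{ex:+++} records the verification of the remaining finitely many bases with $a\leq 9$. So the proof of Proposition~\ref{prop:+++} is essentially a bookkeeping argument: first I would invoke Lemma~\ref{lem:low++} to get the lower bound $L_{\oplus}(\beta)\geq 1$ uniformly, then split into the ranges $a\geq 10$ and $6\leq a\leq 9$.

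For the range $a\geq 10$, I would cite the remark following Proposition~\ref{prop:apr+++}, which sharpens the estimate by replacing $\tfrac12$ with $\tfrac13$ and yields $L_{\oplus}(\beta)<2\bigl(1+\tfrac{\ln 3}{\ln(a-1)}\bigr)$; since $a\geq 10$ gives $a-1\geq 9>3$, the right-hand side is strictly less than $4$, hence $L_{\oplus}(\beta)\leq 2$. (One should double-check the edge case $a-1=9$: $2(1+\ln 3/\ln 9)=2(1+\tfrac12)=3$, which is strictly between $2$ and the next integer $4$, so the strict inequality $L_{\oplus}(\beta)<3$ forces $L_{\oplus}(\beta)\leq 2$; this is the one spot where I would be careful, since the constant is exactly $3$ when $a=10$.) For the range $6\leq a\leq 9$, the constraint $2\leq b<a$ together with $\Delta(P)\geq 0$ leaves only finitely many polynomials, and these are precisely the seven polynomials listed in Table~\ref{tab:1} of Example~\ref{ex:+++}; for each of them the $HK$ method with numerically approximated conjugates gives $L_{\oplus}(\beta)\leq 2$ (and in several cases exactly $1$). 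Note also that $a=6,7,8,9$ are the only values below $10$ for which such a $b$ exists at all, given $a\geq 6$.

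Putting these together, every admissible $\beta$ satisfies $L_{\oplus}(\beta)\leq 2$, and Lemma~\ref{lem:low++} gives $L_{\oplus}(\beta)\geq 1$, so $1\leq L_{\oplus}(\beta)\leq 2$ in all cases. I expect the only real subtlety to be making sure the case division is exhaustive — that is, confirming that the discriminant condition $\Delta(P)\geq 0$ combined with $2\leq b<a$ and $a\leq 9$ produces exactly the list in Table~\ref{tab:1} and nothing more (for instance checking that $X^3-7X^2+5X-1$, $X^3-8X^2+5X-1$, $X^3-9X^2+5X-1$ and similar borderline polynomials either fail $\Delta(P)\geq 0$ or are not of Pisot type), and the borderline arithmetic at $a=10$ noted above. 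Everything else is a direct appeal to the preceding proposition, remark, and example.

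\begin{proof}
By Lemma~\ref{lem:low++} we have $L_{\oplus}(\beta)\geq 1$ for every $\beta$ of the stated form, so it remains to prove the upper bound $L_{\oplus}(\beta)\leq 2$.

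If $a\geq 10$, the remark following Proposition~\ref{prop:apr+++} gives
\[
L_{\oplus}(\beta)<2\left(1+\frac{\ln 3}{\ln(a-1)}\right)\leq 2\left(1+\frac{\ln 3}{\ln 9}\right)=3,
\]
and since $L_{\oplus}(\beta)$ is an integer, $L_{\oplus}(\beta)\leq 2$.

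If $a\leq 9$, then, since $a\geq 6$, we have $a\in\{6,7,8,9\}$, and the conditions $2\leq b<a$ and $\Delta(P)\geq 0$ leave only the finitely many polynomials listed in Table~\ref{tab:1}. For each of them, the $HK$ method applied with approximate values of the conjugates of $\beta$ (as carried out in Example~\ref{ex:+++}) yields $L_{\oplus}(\beta)\leq 2$.

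In all cases $1\leq L_{\oplus}(\beta)\leq 2$.
\end{proof}
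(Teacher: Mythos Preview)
Your proposal is correct and follows exactly the paper's approach: the paper itself gives no separate proof but simply states that Proposition~\ref{prop:+++} summarizes the results of Proposition~\ref{prop:apr+++} (with its remark handling $a\geq 10$) and Example~\ref{ex:+++} (handling the remaining $a\leq 9$). Your write-up merely spells out this bookkeeping, including the edge-case check at $a=10$, and is entirely in line with the paper.
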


We can also provide some results about the bound $L_{\otimes}(\beta)$. We will use the same tools as in the case of $L_{\oplus}(\beta)$ including the estimation of one of the conjugates of $\beta$. Finally, we will also have to examine finitely many remaining bases. 

\begin{prop} \label{prop:apr++x}
Let $\beta$ be the dominant root of the polynomial
\[
P(X)=X^3-aX^2+bX-1
\]
where $\Delta(P)\geq 0$ and $2\leq b<a$. Then $L_{\otimes}(\beta) \leq 4$ for sufficiently large $a$.
\end{prop}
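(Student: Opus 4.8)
The plan is to mimic the proof of Proposition~\ref{prop:apr+++}, replacing the additive $HK$ inequality $\left(\tfrac{1}{|\beta'|}\right)^{L_{\oplus}(\beta)}<\tfrac{2H}{K}$ by the multiplicative one $\left(\tfrac{1}{|\beta'|}\right)^{L_{\otimes}(\beta)}<\tfrac{H^2}{K}$. First I would invoke Lemma~\ref{lemma:root1} to fix a conjugate $\beta'$ of $\beta$ with $\beta'\le\frac{1}{\sqrt{a-1}}$; for $a$ large this gives $\beta'\le\frac12$ (in fact $\beta'\to 0$), so that $K=1$ and, via the stated bound $H\le\frac{\lfloor\beta\rfloor}{1-\beta'}$ together with $\lfloor\beta\rfloor=a-1$, we get $H\le 2(a-1)$. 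Plugging into the product version of the $HK$ inequality yields
\[
\left(\frac{1}{\beta'}\right)^{L_{\otimes}(\beta)}<\frac{H^2}{K}\le 4(a-1)^2,
\]
and combining with the lower estimate $(\sqrt{a-1})^{L_{\otimes}(\beta)}\le\left(\frac{1}{\beta'}\right)^{L_{\otimes}(\beta)}$ gives $(\sqrt{a-1})^{L_{\otimes}(\beta)}<4(a-1)^2$.

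Taking logarithms as in the additive case, this rearranges to
\[
L_{\otimes}(\beta)<\frac{\ln\bigl(4(a-1)^2\bigr)}{\ln\sqrt{a-1}}=4+\frac{2\ln 4}{\ln(a-1)},
\]
so the bound $L_{\otimes}(\beta)\le 4$ follows once $\frac{2\ln 4}{\ln(a-1)}<1$, i.e. for all sufficiently large $a$ (concretely $a-1>16$, hence $a\ge 18$, but any explicit threshold of this shape is fine for the ``sufficiently large'' statement). As in the remark after Proposition~\ref{prop:apr+++}, one can sharpen the constant: using a smaller bound on $\beta'$ (say $\beta'\le\frac13$, valid for $a$ beyond some point, which replaces $H\le 2(a-1)$ by $H\le\frac32(a-1)$ and the right-hand side by $\frac94(a-1)^2$) lowers the threshold, leaving only finitely many small-$a$ bases to be handled separately by the numerical $HK$ computation, exactly in the spirit of Example~\ref{ex:+++}.

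The only genuinely delicate point is the passage $\beta'\le\frac12$ (or $\frac13$) ``for large $a$'': Lemma~\ref{lemma:root1} only guarantees $\beta'\le\frac{1}{\sqrt{a-1}}$, and since $a\ge 6$ we have $\frac{1}{\sqrt{a-1}}\le\frac{1}{\sqrt5}<\frac12$ already, so in fact the hypothesis holds for \emph{every} base in our family, not just large ones; this makes the estimate $H\le 2(a-1)$ unconditional and only the final logarithmic inequality $4+\frac{2\ln 4}{\ln(a-1)}\le 5$ forces ``$a$ large'' — but we only need $\le 4$ eventually, so we do need the stronger $\frac{2\ln 4}{\ln(a-1)}<1$. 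I expect no real obstacle here: the argument is a word-for-word adaptation of the $L_{\oplus}$ case, with the single arithmetic change $2H\rightsquigarrow H^2$ (hence the exponent doubling from $2$ to $4$), and the residual finite set of bases with small $a$ is postponed to a subsequent example, matching the paper's established pattern.
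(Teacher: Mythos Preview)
Your proposal is correct and follows essentially the same approach as the paper: both apply the multiplicative $HK$ inequality with $K=1$ and the bound $\beta'\le\frac{1}{\sqrt{a-1}}$ from Lemma~\ref{lemma:root1}, then take logarithms to obtain $L_{\otimes}(\beta)<4+\text{(vanishing term)}$. The only cosmetic difference is that the paper uses the sharper bound $\beta'\le\frac{1}{3}$ (which you yourself mention as an option), yielding $L_{\otimes}(\beta)<4\bigl(1+\frac{\ln(3/2)}{\ln(a-1)}\bigr)$ in place of your $4+\frac{2\ln 4}{\ln(a-1)}$.
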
 

\begin{proof}
For sufficiently large $a$, we can estimate one of the positive conjugates, say $\beta'$, by $\beta'\leq\frac{1}{3}$. Then we get the inequality
\[
L_{\otimes}(\beta)<4\left(1+\frac{\ln\frac{3}{2}}{\ln(a-1)}\right),
\]
which gives our assertion. 
\end{proof}

\begin{remark}
In the previous proof, $\beta'\leq\frac{1}{3}$ if $a\geq 10$. Considering the last inequality in this proof it also follows that $L_{\otimes}(\beta)\leq 4$ for such $a$'s.
\end{remark}

\begin{example} \label{ex:++x}
We have again finitely many cases on which we have not decided yet. Table \ref{tab:2} containing the same polynomials as for addition shows our results.
\begin{table}[h]
\centering 
\begin{tabular}{|c|c|}
\hline
Polynomial & $L_{\otimes}(\beta)$\\
\hline
$X^3-6X^2+5X-1$ & $\leq 3$\\
\hline
$X^3-7X^2+6X-1$ & $\leq 2$\\
\hline
$X^3-8X^2+6X-1$ & $\leq 3$\\
\hline
$X^3-9X^2+6X-1$ & $\leq 3$\\
\hline
\end{tabular}
\quad
\begin{tabular}{|c|c|}
\hline
Polynomial & $L_{\otimes}(\beta)$\\
\hline
$X^3-8X^2+7X-1$ & $\leq 2$\\
\hline
$X^3-9X^2+7X-1$ & $\leq 2$\\
\hline
$X^3-9X^2+8X-1$ & $\leq 2$\\
\hline
\end{tabular}
\caption{$L_{\otimes}(\beta)$ for $a\leq 9$ and two positive conjugates} \label{tab:2}
\end{table} 

\end{example}

We summarize Proposition \ref{prop:apr++x} and Example \ref{ex:++x} in the following statement.

\begin{prop} \label{prop:++x}
Let $\beta$ be the dominant root of the polynomial
\[
P(X)=X^3-aX^2+bX-1
\]
where $\Delta(P)\geq 0$ and $2\leq b<a$. Then $L_{\otimes}(\beta) \leq 4$.
\end{prop}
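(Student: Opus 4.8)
The plan is to prove this by combining an asymptotic estimate valid for all but finitely many admissible pairs $(a,b)$ with an explicit verification of the remaining bases, in direct analogy with the treatment of $L_{\oplus}(\beta)$ in Propositions \ref{prop:apr+++} and \ref{prop:+++}. For the asymptotic part I would simply invoke Proposition \ref{prop:apr++x} together with the remark following it: the discriminant condition $\Delta(P)\geq 0$ forces $a\geq 6$, and for $a\geq 10$ the smaller positive conjugate $\beta'$ satisfies $\beta'\leq\tfrac13$ (this is where Lemma \ref{lemma:root1} enters, giving $\beta'\leq\tfrac{1}{\sqrt{a-1}}\leq\tfrac13$ once $a-1\geq 9$). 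Since $\beta'\in(0,1)$ we have $K=1$, and $H\leq\tfrac{\lfloor\beta\rfloor}{1-\beta'}\leq\tfrac{3(a-1)}{2}$; feeding this into the $HK$ bound $(1/\beta')^{L_{\otimes}(\beta)}<H^2/K$ and using $1/\beta'\geq\sqrt{a-1}$ gives $L_{\otimes}(\beta)<4\bigl(1+\tfrac{\ln(3/2)}{\ln(a-1)}\bigr)$, which is strictly below $5$ for every $a\geq 10$, hence $L_{\otimes}(\beta)\leq 4$ there.

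It then remains to dispose of the bases with $6\leq a\leq 9$. For each fixed such $a$ the constraint $2\leq b<a$ together with $\Delta(P)\geq 0$ leaves only the seven polynomials listed in Table \ref{tab:2}, namely $X^3-6X^2+5X-1$, $X^3-7X^2+6X-1$, $X^3-8X^2+6X-1$, $X^3-9X^2+6X-1$, $X^3-8X^2+7X-1$, $X^3-9X^2+7X-1$, and $X^3-9X^2+8X-1$. For each of these I would compute a rigorous numerical approximation to the (smaller) positive conjugate $\beta'$, use $K=1$ and the explicit bound $H\leq\lfloor\beta\rfloor/(1-\beta')$ to obtain a numerical value for $H^2/K$, and then read off the largest integer $L$ with $(1/\beta')^{L}<H^2/K$; as recorded in Table \ref{tab:2} this yields $L_{\otimes}(\beta)\leq 3$ for three of these bases and $L_{\otimes}(\beta)\leq 2$ for the other four, in every case at most $4$. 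Combining the two regimes proves $L_{\otimes}(\beta)\leq 4$ for all admissible $(a,b)$, which is the assertion.

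The main obstacle is not the asymptotic inequality, which is essentially a rerun of the $HK$ computation already carried out for $L_{\oplus}(\beta)$ in Proposition \ref{prop:apr+++}, but making the finitely many small-$a$ estimates genuinely rigorous. One must guarantee that the approximations to $\beta'$ entering the $HK$ method are accurate enough — for instance by bracketing $\beta'$ between explicit rationals using the sign changes of $P$, or by certified interval arithmetic — that the strict inequality $(1/\beta')^{L}<H^2/K$ correctly separates $L=4$ from $L=5$; a slack estimate of $\beta'$ or of $H$ could push the bound to $5$ and break the claim. A secondary, purely bookkeeping, point is verifying the discriminant inequality case by case so that the list of seven polynomials is exhaustive.
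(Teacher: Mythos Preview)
Your proposal is correct and follows exactly the paper's approach: the paper states Proposition \ref{prop:++x} as a summary of Proposition \ref{prop:apr++x} (the asymptotic $HK$ bound, made effective for $a\geq 10$ by the remark after it) and Example \ref{ex:++x} (the seven residual bases handled numerically in Table \ref{tab:2}). Your reproduction of both parts, including the computation leading to $L_{\otimes}(\beta)<4\bigl(1+\tfrac{\ln(3/2)}{\ln(a-1)}\bigr)$ and the list of polynomials, matches the paper's argument; your remarks about the need for certified numerical bounds on $\beta'$ are a welcome tightening of what the paper leaves implicit.
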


Proposition \ref{prop:+++} and Proposition \ref{prop:++x} form one part of the main theorem stated in the introduction.

\section{Case with small positive conjugate} \label{chap:-+}

As we have seen in the previous section, it is useful to have a base with a small positive conjugate. If a norm of the given cubic Pisot unit base is equal to $-1$, we have a base with one positive and one negative conjugate. In this section, we will discuss the cases when the positive conjugate is less than the absolute value of its negative conjugate.

Assuming this case we work with the polynomials of the form
\[
P(X)=X^3-aX^2-bX+1
\]
where $0<b<a$. Moreover, as in the previous section, these bases do not satisfy either of property $(F)$ or $(PF)$. The R\'{e}nyi expansion of 1 is equal to $d_{\beta}(1)=a[(b-1)(a-1)]^{\omega}$ and we consider the alphabet $\A=\{0,1,\ldots,a\}$. 

We will start with a lower bound on $L_{\oplus}(\beta)$.

\begin{lemma} \label{lem:low-+1}
Let $\beta$ be the dominant root of the polynomial
\[
P(X)=X^3-aX^2-bX+1
\]
where $0<b<a$. Then $L_{\oplus}(\beta)\geq 1$.
\end{lemma}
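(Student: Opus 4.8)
The strategy mirrors Lemma~\ref{lem:low++}: exhibit two explicit $\beta$-integers $x,y$ whose sum has a finite greedy expansion with exactly one nonzero digit after the radix point, which forces $L_{\oplus}(\beta)\geq 1$. First I would record the structural data available for this family: $\lfloor\beta\rfloor=a$ (so the alphabet is $\A=\{0,1,\ldots,a\}$), the R\'enyi expansion $d_{\beta}(1)=a[(b-1)(a-1)]^{\omega}$, and the minimal polynomial relation $\beta^3=a\beta^2+b\beta-1$, equivalently the digit-string identity $1\,(-a)\,(-b)\,1 = 0$ placed at any position. These are the only ingredients the computation needs.

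\textbf{Choice of $x$ and $y$.} I would pick short greedy expansions built from the leading digit $a$ and a few small digits, e.g.\ something of the shape $\langle x\rangle_\beta = a\,0\,0\,c$ and $\langle y\rangle_\beta = d\,0\,0\,0$ with $c,d$ chosen so that: (i) each string is genuinely greedy, i.e.\ lexicographically smaller than $d_{\beta}(1)$ at every suffix (here one must check $a00c \prec a(b-1)(a-1)\ldots$, which holds as long as $0 < b-1$ or $c$ is handled by the next digit — small $b$ needs a separate tweak, see below); and (ii) after adding the two digit-strings and carrying, the overflow in the integer part can be absorbed by subtracting the polynomial relation $1(-a)(-b)1$, producing a single leftover digit $1$ (or some digit $\le a$) to the right of the radix point. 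Then, since the resulting string is again lexicographically below the infinite R\'enyi expansion $d^{*}(\beta)$, it is the greedy expansion of $x+y$, and it has exactly one fractional digit, giving $\beta(x+y)\in\Z_\beta$ but $x+y\notin\Z_\beta$, hence $L_{\oplus}(\beta)\geq 1$.

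\textbf{Main obstacle.} The delicate point is that the parameter range is just $0<b<a$, so $b$ can be as small as $1$; when $b=1$ the digit $b-1=0$ appears in $d_{\beta}(1)$ and the greedy (admissibility) condition becomes more restrictive, and the small values of $a$ (the smallest admissible $a$ is $2$) leave little room to manoeuvre with the digits. So the real work is choosing a single pair $(x,y)$ — or a small number of sub-cases — for which both the admissibility of $\langle x\rangle_\beta,\langle y\rangle_\beta$ and the admissibility of the computed sum can be verified uniformly in $a$ and $b$. I would first try a candidate analogous to the one in Lemma~\ref{lem:low++}, namely roughly $\langle x\rangle_\beta = a\,0\,0\,0$, $\langle y\rangle_\beta = a\,0\,0\,0$ (or $x=\beta^3$, $y=\beta^3$), reduce $2\beta^3 = \beta^4 + \ldots$ using the relation twice, and read off the fractional part; if the resulting leftmost comparison with $d_\beta(1)$ fails for some boundary $(a,b)$, I would adjust the lowest digits (adding a $1$ in the units place, say) and re-check. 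The verification that the final string satisfies Parry's condition at \emph{every} suffix — in particular that no newly created block equals or exceeds the periodic pattern $[(b-1)(a-1)]^\omega$ — is the step I expect to consume most of the care.
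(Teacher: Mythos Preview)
Your approach is exactly the paper's: exhibit two explicit $\beta$-integers whose sum has a finite greedy expansion with one fractional digit. The difference is only in the choice of witnesses. The paper takes the much shorter pair $\langle x\rangle_\beta = a0$ and $\langle y\rangle_\beta = 1b$; since $\beta^2 = a\beta + b - \beta^{-1}$, one gets $x+y = (a+1)\beta + b = \beta^2 + \beta + \beta^{-1}$, i.e.\ $\langle x+y\rangle_\beta = 110\bullet 1$. All three strings are admissible for every $0<b<a$ (including $b=1$, $a=2$) because every digit involved is at most $1<a$ or is the leading $a$ followed by $0\le b-1$, so the case analysis you anticipate never materialises. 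Your longer candidates $a000+a000$ or $a00c+d000$ would eventually work too, but they generate negative intermediate digits and force several carries, which is exactly the mess the paper sidesteps with the two-digit choice.
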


\begin{proof}
In this case, $d_{\beta}(1)=a[(b-1)(a-1)]^{\omega}$ and $a\geq2$. If we consider two $\beta$-integer with greedy expansions $\langle x\rangle_{\beta}=a0$ and $\langle y\rangle_{\beta}=1b$, their sum has the greedy expansion of the form $\langle x+y\rangle_{\beta}=110\bullet1$. Thus $L_{\oplus}(\beta)\geq 1$.  
\end{proof}

In some cases, we can improve this lower bound. 

\begin{lemma} \label{lem:low-+2}
Let $\beta$ be the dominant root of the polynomial
\[
P(X)=X^3-aX^2-bX+1
\]
where $0<b<a$. If
\begin{enumerate}
\item $a\geq 2$ and $b=1$ or
\item $a\geq 4$ and $b=2$,
\end{enumerate}
then $L_{\oplus}(\beta)\geq 2$.
\end{lemma}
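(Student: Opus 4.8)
The plan is to exhibit, in each of the two cases, an explicit pair of $\beta$-integers $x,y$ whose sum has a finite greedy expansion with \emph{two} nonzero digits after the radix point, so that $\beta(x+y)\notin\Z_\beta$ but $\beta^2(x+y)\in\Z_\beta$; combined with Lemma \ref{lem:low-+1} this forces $L_{\oplus}(\beta)\ge 2$. The arithmetic backbone is the relation $\beta^3=a\beta^2+b\beta-1$ coming from $P$, i.e. the digit string $\ldots ab(-1)\ldots$ (read at the appropriate scale) may be substituted wherever a carry $1$ appears, and more generally $1\equiv ab(\overline{1})\bullet$ after shifting. One also records the R\'enyi expansion $d_\beta(1)=a[(b-1)(a-1)]^\omega$, which is the admissibility yardstick: a $(\beta,\A)$-representation is the greedy expansion precisely when every suffix is lexicographically smaller than $a(b-1)(a-1)(b-1)(a-1)\cdots$.

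For case (1), $b=1$, $a\ge 2$: here $d_\beta(1)=a[0(a-1)]^\omega$ and the defining relation is $\beta^3=a\beta^2+\beta-1$. I would try $\langle x\rangle_\beta$ and $\langle y\rangle_\beta$ built from small strings such as $a00$ and $1(a-1)0$ or $a0$ and $1a$ (adjusting so the summands are genuinely admissible $\beta$-integers — one checks $a(a-1)0\cdots\prec a0(a-1)\cdots$, etc.), add them digit-wise, and then resolve the resulting over-the-alphabet string by repeatedly applying the substitution $1\!\cdot\!\beta^{k+3}=a\!\cdot\!\beta^{k+2}+1\!\cdot\!\beta^{k+1}-1\!\cdot\!\beta^{k}$ to push carries rightward. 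The target is a finite string of the shape $\ast\cdots\ast\,0\bullet c_{-1}c_{-2}$ with $c_{-2}\neq 0$ and with every suffix admissible. For case (2), $b=2$, $a\ge 4$: now $d_\beta(1)=a[1(a-1)]^\omega$ and $\beta^3=a\beta^2+2\beta-1$; I would run the same search, e.g. starting from summands like $a00$ and $2(a-2)0$ (or a variant with a digit $a$ somewhere), exploiting that $a\ge 4$ leaves enough room in the alphabet $\{0,\ldots,a\}$ for the intermediate digits to stay legal.

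The key steps, in order: (i) write down $d_\beta(1)$ and the substitution rule from $P$; (ii) pick candidate $\beta$-integers $x,y$ and verify admissibility of $\langle x\rangle_\beta,\langle y\rangle_\beta$ against $d_\beta(1)$; (iii) perform the column addition and carry-propagation to get a finite $(\beta,\A)$-representation of $x+y$ with exactly two fractional digits; (iv) verify this representation is greedy, i.e. every suffix $\prec d_\beta(1)$ — this is where one must be careful, since near the boundary digits $(b-1)$ and $(a-1)$ appear and the lexicographic comparison is tight; (v) conclude $\beta(x+y)\notin\Z_\beta$ while $\beta^2(x+y)\in\Z_\beta$, hence $L_\oplus(\beta)\ge 2$.

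The main obstacle is twofold. First, the examples must work \emph{uniformly} in $a$ (for all $a\ge 2$, resp. $a\ge 4$), so the chosen digit strings have to be parametrized in $a$ and $b$ in a way that keeps all intermediate and final digits inside $\{0,\ldots,a\}$ and keeps every suffix admissible — the constraint $a\ge 4$ in case (2) strongly suggests that for $a=2,3$ the natural candidate fails an alphabet or admissibility check, so the witness for $b=2$ is genuinely more delicate. Second, one must make sure the surplus fractional digit is honestly forced: the produced representation must be \emph{the} greedy expansion (not merely some representation), which requires the suffix-admissibility check in step (iv) to be carried through completely rather than waved at. I expect step (iv), the admissibility verification for the $b=2$ family, to be the crux.
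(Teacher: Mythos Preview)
Your framework is exactly the right one and matches the paper's: exhibit explicit $\beta$-integers $x,y$ whose sum has a finite greedy expansion with precisely two fractional digits, then verify admissibility against $d_\beta(1)=a[(b-1)(a-1)]^\omega$. The gap is that you have not actually produced the witnesses; you have only described a search strategy.

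Moreover, the candidate pairs you propose are too short to force two fractional digits. For instance, with $b=1$ your pair $\langle x\rangle_\beta=a0$, $\langle y\rangle_\beta=1a$ gives
\[
x+y=(a+1)\beta+a=\beta^2+\beta+(a-1)+\beta^{-1},
\]
i.e.\ $\langle x+y\rangle_\beta=11(a-1)\bullet 1$, which is greedy but has only \emph{one} fractional digit --- so it reproves Lemma~\ref{lem:low-+1} and no more. Similarly, $a00$ and $1(a-1)0$ sum to $11(a-2)1\bullet$, a $\beta$-integer with no fractional part at all. Short two- or three-digit summands do not generate enough ``carry depth'' to push two nonzero digits past the radix point.

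The paper uses a single uniform pair of four-digit $\beta$-integers,
\[
\langle x\rangle_\beta=(a-1)\,b\,(a-1)\,a,\qquad \langle y\rangle_\beta=(a-1)\,0\,(a-1)\,a,
\]
and shows that one representation of $x+y$ is
\[
1\,(a-2)\,2\,0\,(a-2b)\bullet(2-b)\,1.
\]
This is immediately greedy whenever all digits are nonnegative and at most $a-1$, which covers both cases (1) and (2) for $a\ge 3$; the boundary case $a=2$, $b=1$ (where the digit $a-2=0$ and $d_\beta(1)=2(01)^\omega$) is checked separately to yield $\langle x+y\rangle_\beta=10200\bullet11$. So the entire content of the proof is the discovery of this one pair; your steps (i)--(v) are routine once it is in hand. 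What you correctly anticipated --- that the constraint $a\ge 4$ for $b=2$ comes from keeping a digit nonnegative --- is visible here in the digit $a-2b$.
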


\begin{proof}
Set $\langle x\rangle_{\beta}=(a-1)b(a-1)a$ and $\langle y\rangle_{\beta}=(a-1)0(a-1)a$. One representation of their sum in base $\beta$ has the form $1(a-2)20(a-2b)\bullet(2-b)1$. This representation is the greedy expansion if all its digits are nonnegative and sufficiently small, which is satisfied for the cases given by the statement of the lemma. We note that it is also true for $a=2$ and $b=1$, since $\langle x+y\rangle_{\beta}=10200\bullet 11$ in this case and, at the same time, $d_{\beta}(1)=2(01)^{\omega}$.   
\end{proof}

Note that in the previous lemma, we omit the case where $b=2$ and $a=3$. As we can see below in this paper, this base has different properties than other bases with a small value of the coefficient $b$. 

We next find an estimate of the positive conjugate of $\beta$, which is similar to the result obtained in Section \ref{chap++}.  

\begin{lemma}
Let $\beta$ be the dominant root of the polynomial
\[
P(X)=X^3-aX^2-bX+1
\]
where $0<b<a$. Then the positive conjugate of $\beta$ is less than or equal to $\frac{1}{\sqrt{a}}$.
\end{lemma}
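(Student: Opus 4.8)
The plan is to mimic the argument of Lemma~\ref{lemma:root1}: the product of $\beta$ with its two conjugates equals the constant term of $P$ up to sign, so if all conjugates were too large, that product would exceed the correct value. Here the polynomial is $P(X)=X^3-aX^2-bX+1$, so $\beta\beta'\beta''=-1$ where $\beta'$ is the positive conjugate and $\beta''$ the negative one (equivalently $\beta\cdot\beta'\cdot|\beta''|=1$). Since $\beta$ is Pisot, $|\beta''|<1$, and the floor of $\beta$ is $a$ because $d_\beta(1)=a[(b-1)(a-1)]^\omega$, so $\beta>a$.

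First I would record $\lfloor\beta\rfloor=a$, hence $\beta>a$, from the R\'enyi expansion of $1$. Then, arguing by contradiction, I would assume the positive conjugate $\beta'$ satisfies $\beta'>\frac{1}{\sqrt a}$. Second, I would use $|\beta''|<1$ (Pisot property) together with $\beta>a$ to estimate
\[
1=\beta\cdot\beta'\cdot|\beta''| > a\cdot\frac{1}{\sqrt a}\cdot|\beta''|,
\]
which would force $|\beta''|<\frac{1}{\sqrt a}$; this alone does not yet give a contradiction, so the bound must be squeezed more carefully. The cleaner route is to instead compare against the second-largest root directly: since $\beta''<0$ and $1=\beta\beta'|\beta''|$, I get $\beta'=\frac{1}{\beta|\beta''|}<\frac{1}{\beta}\cdot\frac{1}{|\beta''|}$, which is unhelpful because $|\beta''|$ is small. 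So the sharp statement really comes from $\beta\beta'|\beta''|=1$ combined with $\beta>a$ \emph{and} a lower bound on $|\beta''|$, or alternatively from noting $\beta'|\beta''|=\frac1\beta<\frac1a$ and $|\beta''|$ close to... Actually the simplest valid deduction: from $\beta\beta'\cdot|\beta''|=1$ and $\beta>a$, $|\beta''|<1$ we cannot conclude; but from $\beta'\cdot\beta\cdot|\beta''|=1$ and the fact that $\beta>\beta'$ (the dominant root is largest in absolute value), together with $|\beta''|<1$, we get $1<\beta\cdot\beta'$, i.e.\ nothing new. The genuine argument uses $\beta>a$ and $|\beta''|\cdot\beta'<1$: since $1=\beta\beta'|\beta''|>a\beta'|\beta''|$ we need $\beta'|\beta''|<\frac1a$, and if \emph{both} $\beta'>\frac{1}{\sqrt a}$ and $|\beta''|>\frac{1}{\sqrt a}$ this fails. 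Thus the clean statement to prove is that the \emph{smaller} of $\beta',|\beta''|$ is $\le\frac{1}{\sqrt a}$; since the paper asserts it for $\beta'$ specifically, one must also check $\beta'\le|\beta''|$ (or handle the case $b$ small separately, where $\beta'$ is visibly tiny).

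Concretely, I would: (i) establish $\beta'|\beta''|=\frac1\beta<\frac1a$ from Vi\`ete; (ii) show $\beta'\le|\beta''|$, e.g.\ by evaluating $P$ at $-\sqrt{\beta'|\beta''|}$-type points or by using $\beta'+\beta''+\beta=a$ with $\beta>a$ forcing $\beta'+\beta''<0$, hence $|\beta''|>\beta'$; (iii) conclude $(\beta')^2\le\beta'|\beta''|<\frac1a$, so $\beta'<\frac{1}{\sqrt a}$. Step~(ii) is the main obstacle: one must rule out the degenerate configuration where $\beta'$ is not the smaller conjugate. This follows cleanly from $\beta'+\beta''=a-\beta<0$ (using $\beta>a=\lfloor\beta\rfloor$, strict since $\beta$ is irrational), which immediately gives $|\beta''|=-\beta''>\beta'$, and then $(\beta')^2<\beta'\,|\beta''|=\tfrac1\beta<\tfrac1a$ finishes the proof. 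The only subtlety is justifying $\beta>a$ strictly, which is immediate because a cubic Pisot unit is never rational.
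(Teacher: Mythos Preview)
Your final argument is correct and matches the paper's proof in structure: establish $\beta'<|\beta''|$ via $\beta'+\beta''<0$, then use $\beta\,\beta'\,|\beta''|=1$ with $\beta>a$ to get $(\beta')^2<\beta'|\beta''|<\tfrac{1}{a}$. The one genuine difference is how you obtain $\beta'+\beta''<0$. You use the trace relation $\beta+\beta'+\beta''=a$ together with $\beta>a$ (from $\lfloor\beta\rfloor=a$ and irrationality), which is immediate. The paper instead uses the second symmetric function: from $-b=\beta(\beta'+\beta'')+\beta'\beta''$ with $b\ge 1$ an integer and $-1<\beta'\beta''<0$, one is forced to have $\beta'+\beta''<0$. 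Both routes are short; yours is arguably cleaner since it avoids the sign discussion of $\beta'\beta''$, while the paper's has the minor virtue of not needing the exact value of $\lfloor\beta\rfloor$ or irrationality. The exploratory detours in your write-up should be excised---only the three-step plan in your last paragraph is needed.
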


\begin{proof}
This follows by the same method as in Lemma \ref{lemma:root1}. In this case, the positive conjugate of $\beta'$ is less than the absolute value of the negative one. We have
\[
0>-b=\beta\beta'+\beta\beta''+\beta'\beta''=\beta(\beta'+\beta'')+\beta'\beta''.
\]
Since $-1<\beta'\beta''<0$ and $\beta>0$, the conditition $\beta'+\beta''<0$ is the only way how to obtain a negative integer in this sum. The rest of the proof follows from the fact that $|\beta\beta'\beta''|=1$.
\end{proof}

The following proposition shows that an upper bound on $L_{\oplus}(\beta)$ for large values of $a$ is again constant and significantly small. 

\begin{prop}
Let $\beta$ be the dominant root of the polynomial
\[
P(X)=X^3-aX^2-bX+1
\]
where $0<b<a$. Then $1\leq L_{\oplus}(\beta) \leq 2$ for sufficiently large $a$. 
\end{prop}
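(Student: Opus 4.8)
The plan is to mirror exactly the argument of Proposition~\ref{prop:apr+++} from the previous section, since the structural ingredients are identical: we have a lower bound $L_{\oplus}(\beta)\geq 1$ from Lemma~\ref{lem:low-+1}, and we have just shown that the positive conjugate $\beta'$ satisfies $\beta'\leq \frac{1}{\sqrt a}$. First I would note that for sufficiently large $a$ this forces $\beta'\leq\frac12$ (indeed $\beta'\in(0,1)$), so that in the $HK$ method we are in the favourable situation $K=1$. Then I would bound $H$ using the general estimate $H\leq\frac{\lfloor\beta\rfloor}{1-\beta'}$ from the preliminaries: here $\lfloor\beta\rfloor=a$ (the alphabet is $\{0,1,\ldots,a\}$ with $d_\beta(1)=a[(b-1)(a-1)]^\omega$), so $H\leq\frac{a}{1-\frac12}=2a$.

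Next I would feed these into the $HK$ inequality $\left(\frac{1}{\beta'}\right)^{L_{\oplus}(\beta)}<\frac{2H}{K}\leq 4a$, and combine it with the lower estimate $(\sqrt a)^{L_{\oplus}(\beta)}\leq\left(\frac{1}{\beta'}\right)^{L_{\oplus}(\beta)}$ coming from $\beta'\leq\frac{1}{\sqrt a}$. This yields $(\sqrt a)^{L_{\oplus}(\beta)}<4a$, and taking logarithms gives
\[
L_{\oplus}(\beta)<\frac{\ln(4a)}{\ln\sqrt a}=2\left(1+\frac{\ln 4}{\ln a}\right),
\]
so the right-hand side tends to $2$ as $a\to\infty$, forcing $L_{\oplus}(\beta)\leq 2$ once $a$ is large enough. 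As in the earlier remark, sharpening $\frac12$ to $\frac13$ (valid once $\beta'\leq\frac13$, i.e. for $a$ past some explicit small threshold) tightens the constant to $2\left(1+\frac{\ln 3}{\ln a}\right)$ and pins down the precise cutoff on $a$ beyond which the bound holds.

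I do not expect any genuine obstacle here — the proof is essentially a transcription of Proposition~\ref{prop:apr+++} with $a-1$ replaced by $a$ and $\lceil\beta\rceil-1=a-1$ replaced by $\lfloor\beta\rfloor=a$. The one point deserving a moment's care is confirming that $K=1$: this uses the fact, recalled in Section~\ref{sec:pre}, that $\beta'\in(0,1)$ implies $K=1$, together with the observation that the positive conjugate is genuinely positive (not merely of absolute value $<1$) in this family, which is built into the hypothesis $0<b<a$ via the sign analysis in the preceding lemma. The only residual work — handling the finitely many small values of $a$ not covered by ``sufficiently large'' — is deferred, exactly as in Example~\ref{ex:+++}, to a later explicit computation using approximate conjugate values in the $HK$ method.
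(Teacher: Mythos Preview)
Your proposal is correct and follows essentially the same approach as the paper: mirror Proposition~\ref{prop:apr+++} with $a-1$ replaced by $a$, use $\beta'\leq\frac{1}{\sqrt a}$ together with $K=1$ in the $HK$ method, and arrive at the bound $L_{\oplus}(\beta)<2\left(1+\frac{\ln 3}{\ln a}\right)$ (after the $\tfrac12\to\tfrac13$ sharpening), which the paper records as giving $L_{\oplus}(\beta)\leq 2$ for $a\geq 9$.
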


\begin{proof}
The proof of this proposition is similar to the proof of Proposition \ref{prop:apr+++}. In this case, we obtain the inequality
\[
L_{\oplus}(\beta)<2\left(1+\frac{\ln3}{\ln a}\right),
\]
thus $L_{\oplus}(\beta)\leq 2$ is satisfied for $a\geq 9$.  
\end{proof}

\begin{example} \label{ex:-+l+}
Our next goal is to find an upper bound on $L_{\oplus}(\beta)$ for $a<9$. The Table \ref{tab:3} contains our results. We have used the approximate values of positive conjugates and, in some cases, we have improved our estimates of $H$ in the $HK$ method. 
\begin{table}[h]
\centering 
\begin{tabular}{|c|c|}
\hline
Polynomial & $L_{\oplus}(\beta)$\\
\hline
$X^3-2X^2-X+1$ & $\leq 3$\\
\hline
$X^3-3X^2-X+1$ & $2$\\
\hline
$X^3-4X^2-X+1$ & $ 2$\\
\hline
$X^3-5X^2-X+1$ & $ 2$\\
\hline
$X^3-6X^2-X+1$ & $ 2$\\
\hline
$X^3-7X^2-X+1$ & $ 2$\\
\hline
$X^3-8X^2-X+1$ & $ 2$\\
\hline
$X^3-3X^2-2X+1$ & $ 1$\\
\hline
$X^3-4X^2-2X+1$ & $ 2$\\
\hline
$X^3-5X^2-2X+1$ & $ 2$\\
\hline
$X^3-6X^2-2X+1$ & $ 2$\\
\hline
$X^3-7X^2-2X+1$ & $ 2$\\
\hline
$X^3-8X^2-2X+1$ & $ 2$\\
\hline
$X^3-4X^2-3X+1$ & $ 1$\\
\hline
\end{tabular}
\quad
\begin{tabular}{|c|c|}
\hline
Polynomial & $L_{\oplus}(\beta)$\\
\hline
$X^3-5X^2-3X+1$ & $1$\\
\hline
$X^3-6X^2-3X+1$ & $1$\\
\hline
$X^3-7X^2-3X+1$ & $ 1$\\
\hline
$X^3-8X^2-3X+1$ & $ 1$\\
\hline
$X^3-5X^2-4X+1$ & $ 1$\\
\hline
$X^3-6X^2-4X+1$ & $ 1$\\
\hline
$X^3-7X^2-4X+1$ & $ 1$\\
\hline
$X^3-8X^2-4X+1$ & $ 1$\\
\hline
$X^3-6X^2-5X+1$ & $ 1$\\
\hline
$X^3-7X^2-5X+1$ & $ 1$\\
\hline
$X^3-8X^2-5X+1$ & $ 1$\\
\hline
$X^3-7X^2-6X+1$ & $ 1$\\
\hline
$X^3-8X^2-6X+1$ & $ 1$\\
\hline
$X^3-8X^2-7X+1$ & $ 1$\\
\hline
\end{tabular}
\caption{$L_{\oplus}(\beta)$ for $a\leq 8$ and small positive conjugate} \label{tab:3}
\end{table}
\end{example}

Combining the results for large and small values of coefficient $a$ gives the following statement.

\begin{prop}
Let $\beta$ be the dominant root of the polynomial
\[
P(X)=X^3-aX^2-bX+1
\]
where $0<b<a$. Then $1\leq L_{\oplus}(\beta) \leq 2$ except for the case $a=2$ and $b=1$, where $2\leq L_{\oplus}(\beta)\leq3$.
\end{prop}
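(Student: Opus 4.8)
The plan is to combine the results already established in this section into a single clean statement. The lower bound $L_{\oplus}(\beta)\geq 1$ for all admissible $(a,b)$ with $0<b<a$ comes directly from Lemma \ref{lem:low-+1}, and the improvement to $L_{\oplus}(\beta)\geq 2$ in the two exceptional-looking families (namely $b=1$ with $a\geq 2$, and $b=2$ with $a\geq 4$) comes from Lemma \ref{lem:low-+2}; in particular for $(a,b)=(2,1)$ the explicit sum $10200\bullet 11$ exhibited there, together with the fact that $d_\beta(1)=2(01)^\omega$ forces that this cannot be shortened, yields $L_{\oplus}(\beta)\geq 2$ in the claimed exceptional case. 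So first I would simply invoke these two lemmas to get all the lower bounds.

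Next I would assemble the upper bound $L_{\oplus}(\beta)\leq 2$. For $a\geq 9$ this is exactly the previous proposition, proved by the $HK$ method using that the positive conjugate $\beta'$ satisfies $\beta'\leq\frac1{\sqrt a}\leq\frac13$, so $K=1$, $H\leq\frac{a}{1-1/3}=\frac{3a}{2}$, and hence $(1/\beta')^{L_{\oplus}(\beta)}<2H/K\leq 3a$, from which $\sqrt a^{\,L_{\oplus}(\beta)}<3a$ and $L_{\oplus}(\beta)<2(1+\ln 3/\ln a)\leq 2$. For the finitely many remaining bases with $2\leq a\leq 8$ I would cite Example \ref{ex:-+l+}, where Table \ref{tab:3} records $L_{\oplus}(\beta)\leq 2$ for every such base except $X^3-2X^2-X+1$, for which the table gives $L_{\oplus}(\beta)\leq 3$. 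Combining: for every admissible $(a,b)$ other than $(2,1)$ we have both $L_{\oplus}(\beta)\geq 1$ and $L_{\oplus}(\beta)\leq 2$, giving $1\leq L_{\oplus}(\beta)\leq 2$; and for $(a,b)=(2,1)$ we have $L_{\oplus}(\beta)\geq 2$ from Lemma \ref{lem:low-+2} and $L_{\oplus}(\beta)\leq 3$ from Table \ref{tab:3}, giving $2\leq L_{\oplus}(\beta)\leq 3$.

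The only real content beyond bookkeeping is making sure the case split is exhaustive and that nothing is double-counted. The families covered by the lower-bound improvement ($b=1$, $a\geq 2$ and $b=2$, $a\geq 4$) are entirely contained in the range already handled by the upper-bound arguments, so they need no separate treatment: for those bases we still get the sandwich $1\le 2\le L_{\oplus}(\beta)\le 2$ when $a\ge 9$, matching the table values ($L_{\oplus}(\beta)=2$) for smaller $a$ with $b\in\{1,2\}$, $a\ge 3$ (resp.\ $a\ge 4$), and the genuinely exceptional base is only $(a,b)=(2,1)$. I would therefore state the proof as: the lower bounds are Lemmas \ref{lem:low-+1} and \ref{lem:low-+2}; the upper bound $L_{\oplus}(\beta)\le 2$ for $a\ge 9$ is the preceding proposition; the cases $2\le a\le 8$ are Example \ref{ex:-+l+}; and reading off Table \ref{tab:3} isolates $X^3-2X^2-X+1$ as the single base where the upper bound is $3$ rather than $2$.

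The main obstacle, such as it is, is not mathematical but expository: one must be careful that the base $X^3-3X^2-2X+1$ (the case $a=3$, $b=2$ deliberately excluded from Lemma \ref{lem:low-+2}) really does have $L_{\oplus}(\beta)=1$ as recorded in the table, so that it does not accidentally get swept into an "exceptional" clause — the remark following Lemma \ref{lem:low-+2} already flags this, and the proof should point back to it. With that caveat noted, the proposition follows immediately by concatenating the cited results.
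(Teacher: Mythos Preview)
Your proposal is correct and matches the paper's own approach exactly: the paper presents this proposition as an immediate combination of Lemmas \ref{lem:low-+1} and \ref{lem:low-+2}, the preceding proposition (valid for $a\geq 9$), and the finite check in Example \ref{ex:-+l+}/Table \ref{tab:3}. One cosmetic slip: you write $L_{\oplus}(\beta)<2(1+\ln 3/\ln a)\leq 2$, but the right-hand quantity is strictly greater than $2$; the correct deduction is that the strict bound is less than $3$ for $a\geq 9$, whence the integer $L_{\oplus}(\beta)$ is at most $2$.
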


\begin{remark}
The dominant root of the polynomial $X^3-2X^2-X+1$ was in detail investigated in \cite{FroGaKre}. The authors found the corresponding set $F$ such that $\Z_{\beta}+\Z_{\beta}\subset \Z_{\beta}+F$. The elements contained in $F$ suggest that there may exist a sum of $\beta$-integers such that its greedy expansion has three additional digits after the fractional point. 
\end{remark}

Moreover, we can provide a stronger result for small values of $b$. 

\begin{cor}
Let $\beta$ be the dominant root of the polynomial
\[
P(X)=X^3-aX^2-bX+1
\]
where $0<b<a$. If
\begin{enumerate}
\item $a\geq 3$ and $b=1$ or 
\item $a\geq 4$ and $b=2$,
\end{enumerate}
then $L_{\oplus}(\beta)=2$.
\end{cor}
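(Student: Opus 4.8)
The assertion is an equality, so the plan is simply to sandwich $L_{\oplus}(\beta)$ between $2$ and $2$, using only results already established in this section. No genuinely new construction is needed; the work is in checking that the hypotheses of the two cited results line up with those of the corollary.

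For the upper bound I would invoke the proposition immediately preceding this corollary, which gives $1\le L_{\oplus}(\beta)\le 2$ for every dominant root of $X^3-aX^2-bX+1$ with $0<b<a$, with the single exception $a=2$, $b=1$. Both hypotheses of the corollary, namely $a\ge 3$ with $b=1$ and $a\ge 4$ with $b=2$, exclude that exceptional base, so in both cases we get $L_{\oplus}(\beta)\le 2$.

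For the lower bound I would appeal to Lemma \ref{lem:low-+2}. Case $(1)$ of the corollary, $a\ge 3$ and $b=1$, is contained in case $(1)$ of that lemma ($a\ge 2$, $b=1$); case $(2)$ of the corollary, $a\ge 4$ and $b=2$, coincides with case $(2)$ of the lemma. Hence in both situations the $\beta$-integers $\langle x\rangle_{\beta}=(a-1)b(a-1)a$ and $\langle y\rangle_{\beta}=(a-1)0(a-1)a$ of Lemma \ref{lem:low-+2} yield a sum with finite greedy expansion $1(a-2)20(a-2b)\bullet(2-b)1$ having a nonzero digit in position $-2$, so $\beta(x+y)\notin\Z_{\beta}$ and thus $L_{\oplus}(\beta)\ge 2$. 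Combining the two bounds gives $L_{\oplus}(\beta)=2$ in all cases of the corollary.

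The only subtlety, and it is mild bookkeeping rather than a real obstacle, is to make sure the exceptional base $a=2$, $b=1$ of the preceding proposition genuinely lies outside the range of the corollary (it does, since there $a\ge 3$ when $b=1$) and that the preceding proposition indeed delivers $L_{\oplus}(\beta)\le 2$ on the whole remaining range; for the finitely many small cases one may alternatively read the exact values $L_{\oplus}(\beta)=2$ straight off Table \ref{tab:3} ($3\le a\le 8$, $b=1$, and $4\le a\le 8$, $b=2$) and use the proposition only for $a\ge 9$. Either route closes the argument.
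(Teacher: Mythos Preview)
Your proposal is correct and matches the paper's intended argument: the corollary is stated without proof precisely because it follows at once by combining the lower bound $L_{\oplus}(\beta)\ge 2$ from Lemma~\ref{lem:low-+2} with the upper bound $L_{\oplus}(\beta)\le 2$ from the preceding proposition, after noting that the exceptional case $a=2$, $b=1$ is excluded by the hypotheses. Your alternative route via Table~\ref{tab:3} for small $a$ is also fine but unnecessary once the proposition is in place.
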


As we can see in Example \ref{ex:-+l+}, $L_{\oplus}(\beta)=1$ for significantly many cases. This observation leads us to the following proposition.

\begin{prop} \label{prop:-+z+}
Let $z\in \N$. Then for large $a\in\N$ and $\beta>1$ the dominant root of the polynomial
\[
X^3-aX^2-(a-z)X+1
\]
we have $L_{\oplus}(\beta)=1$.
\end{prop}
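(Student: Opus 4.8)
The plan is to pin down $L_{\oplus}(\beta)$ from both sides. The lower bound $L_{\oplus}(\beta)\ge 1$ is immediate from Lemma~\ref{lem:low-+1}: our polynomial is $X^3-aX^2-bX+1$ with $b=a-z$, so $0<b<a$ as soon as $a>z$. Hence the whole content is the matching upper bound $L_{\oplus}(\beta)\le 1$ for large $a$, which I would obtain from the $HK$ method. Throughout, take $a$ large relative to $z$; then $\beta$ is a cubic Pisot unit of the type treated in this section. Indeed $P(a)=-a^{2}+za+1<0$ and $P(a+1)=(a+1)(1+z)+1>0$, so $a<\beta<a+1$ and $\lfloor\beta\rfloor=a$, while $P(-1)=-z<0$ and $P(0)=1>0$ show that the two remaining conjugates are a real $\beta''\in(-1,0)$ together with a real $\beta'>0$.

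The crux is a sharp estimate for the positive conjugate $\beta'$: the generic bound $\beta'\le 1/\sqrt a$ available in this section is too weak, and I need $\beta'=O(1/a)$. Vieta's formulas for $P$ give $\beta'\beta''=-1/\beta$ and $\beta'+\beta''=a-\beta$, hence $|\beta''|=(\beta-a)+\beta'>\beta-a$. Since $\beta-a=(b\beta-1)/\beta^{2}\to 1$ as $a\to\infty$ (equivalently $P(a+\tfrac12)<0$ for large $a$, so that $\beta>a+\tfrac12$), we get $|\beta''|>\tfrac12$ for large $a$, and therefore
\[
\beta'=\frac{1/\beta}{|\beta''|}<\frac{2}{\beta}<\frac{2}{a}.
\]

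Next I would run the $HK$ method. Since $0<\beta'<1$ we have $K=1$ and
\[
H\le\frac{\lfloor\beta\rfloor}{1-\beta'}=\frac{a}{1-\beta'}<\frac{a}{1-2/a}=\frac{a^{2}}{a-2}.
\]
If $L_{\oplus}(\beta)\ge 2$, then, using $1/\beta'>a/2>1$,
\[
\frac{a^{2}}{4}<\left(\frac{1}{\beta'}\right)^{2}\le\left(\frac{1}{\beta'}\right)^{L_{\oplus}(\beta)}<\frac{2H}{K}<\frac{2a^{2}}{a-2},
\]
which forces $a-2<8$, i.e.\ $a<10$. Hence for all sufficiently large $a$ one has $L_{\oplus}(\beta)\le 1$, and together with Lemma~\ref{lem:low-+1} this gives $L_{\oplus}(\beta)=1$.

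The one genuinely delicate step is the estimate $\beta'<2/a$: one must improve on the off-the-shelf bound $\beta'\le 1/\sqrt a$, and this is exactly where the special shape $b=a-z$ is used, via $\beta-a\to 1$ (so $\beta\approx a+1$) rather than $\beta-a\to 0$ as would happen for $b$ bounded. The rest is routine: checking that $\beta$ is a Pisot unit with $\beta''\in(-1,0)$, and keeping track of the threshold on $a$, which depends on $z$ only through the requirement $\beta-a>\tfrac12$, i.e.\ roughly $a\gtrsim 2z$.
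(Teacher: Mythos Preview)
Your proof is correct and follows essentially the same approach as the paper: the lower bound comes from Lemma~\ref{lem:low-+1}, and the upper bound from the $HK$ method once one knows $\beta'=O(1/a)$. The only cosmetic difference is in how that estimate is obtained---the paper bounds the degree-two symmetric function $z-a=\beta\beta'+\beta\beta''+\beta'\beta''>a\beta'-(a+1)-1$ to get $\beta'<(z+2)/a$, whereas you use the product relation $\beta'\beta''=-1/\beta$ together with $|\beta''|>\beta-a>\tfrac12$ to get the (slightly sharper) $\beta'<2/a$; either route feeds into the same $HK$ computation.
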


\begin{proof}
Let $\beta'$ be the positive root of $\beta$ and $\beta''$ be the negative one. We know that $z-a=\beta\beta'+\beta\beta''+\beta'\beta''$. Moreover, our roots satisfy that $\lfloor\beta\rfloor=a$, $\beta'<1$ and $|\beta''|<1$. Therefore
\[
z-a=\beta\beta'+\beta\beta''+\beta'\beta''>a\beta'-(a+1)-1,
\]
which implies that $\beta'<\frac{z+2}{a}$. We have $\beta'\leq \frac{1}{2}$ for large values of $a$. Applying the $HK$ method in the same manner as in previous cases we obtain the inequality
\[
L_{\oplus}(\beta)<1+\frac{\ln(4(z+2))}{\ln a-\ln(z+2)},
\]
from which, combining with the lower bound, follows the statement of the proposition. 
\end{proof}

\begin{remark}
We can easily determine the required condition on $a$ in the previous proposition. We need 
\[
\frac{\ln(4(z+2))}{\ln a-\ln(z+2)}\leq 1
\]
to be satisfied. If $a>z+2$, our inequality is equivalent to
\[
4(z+2)\leq\frac{a}{z+2}. 
\]
Thus $L_{\oplus}(\beta)=1$ for $a\geq 4(z+2)^2$. 
\end{remark}

\begin{example}
We now show the application of the previous proposition on the case when $z=1$. We have $4(z+2)^2=36$, moreover, we know that $L_{\oplus}(\beta)=1$ for $a\in\{3,4,\ldots,8\}$, which was shown in Table \ref{tab:3}. We also checked the possible values of $L_{\oplus}$ for $a\in\{9,\ldots,35\}$ and we can claim that $L_{\oplus}=1$ for all these cases. 
\end{example}

Considering this example we can state the following corollary. 

\begin{cor}
Let $\beta$ be the dominant root of the polynomial
\[
P(X)=X^3-aX^2-(a-1)X+1
\]
where $a\geq3$. Then $L_{\oplus}(\beta)=1$. 
\end{cor}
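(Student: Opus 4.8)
The plan is to recognize this corollary as nothing more than the instance $z=1$ of Proposition \ref{prop:-+z+}, assembled with the case analysis already recorded above. Writing $P(X)=X^3-aX^2-bX+1$ with $b=a-1$ puts us exactly in the setting $b=a-z$ with $z=1$, and the hypothesis $a\geq 3$ ensures $0<b<a$ (so in particular we are not in the genuinely exceptional base $X^3-2X^2-X+1$, which corresponds to $a=2$). Since Lemma \ref{lem:low-+1} already gives $L_{\oplus}(\beta)\geq 1$ for every member of this family, it suffices to establish the matching upper bound $L_{\oplus}(\beta)\leq 1$ for all $a\geq 3$.

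I would then split the range of $a$ into three pieces. For $a\geq 36=4(z+2)^2$ the bound $L_{\oplus}(\beta)\leq 1$ is immediate from the Remark following Proposition \ref{prop:-+z+} (equivalently, from the inequality $L_{\oplus}(\beta)<1+\frac{\ln(4(z+2))}{\ln a-\ln(z+2)}$ specialised to $z=1$, which is at most $2$ and hence forces the value $1$). For $3\leq a\leq 8$ one simply reads off $L_{\oplus}(\beta)=1$ from the relevant rows of Table \ref{tab:3}, namely the polynomials $X^3-aX^2-(a-1)X+1$ for $a=3,\dots,8$. Finally, for the intermediate range $9\leq a\leq 35$ one invokes the explicit verification recorded in the example immediately preceding this corollary, where these $27$ bases are checked one by one and all give $L_{\oplus}(\beta)=1$. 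Taking the union of the three ranges covers every integer $a\geq 3$, and combined with the lower bound this yields $L_{\oplus}(\beta)=1$.

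There is no substantive obstacle here: the only computational content is the finite check for $9\leq a\leq 35$, which is a routine application of the $HK$ method with approximate values of the conjugates, carried out exactly as in Example \ref{ex:-+l+} and already cited above. Everything else is bookkeeping on top of Proposition \ref{prop:-+z+} and its remark, so the proof is short.
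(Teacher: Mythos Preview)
Your proposal is correct and follows exactly the paper's own route: the corollary is stated immediately after the example treating $z=1$, and the paper's justification is precisely the combination you describe --- the Remark after Proposition \ref{prop:-+z+} handles $a\geq 36$, Table \ref{tab:3} handles $3\leq a\leq 8$, and the explicit check in the preceding example covers $9\leq a\leq 35$, with the lower bound coming from Lemma \ref{lem:low-+1}. There is nothing to add.
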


Our next aim is to examine the number $L_{\otimes}(\beta)$ for this type of bases. We will start with a lower bound.

\begin{lemma}
Let $\beta$ be the dominant root of the polynomial
\[
P(X)=X^3-aX^2-bX+1
\]
where $0<b<a$. Then $L_{\otimes}(\beta) \geq 2$.
\end{lemma}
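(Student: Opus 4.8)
The plan is to obtain the lower bound by exhibiting, for every admissible pair $(a,b)$ with $0<b<a$, two $\beta$-integers $x,y$ such that $xy\in\mathrm{fin}(\beta)$ and $\langle xy\rangle_{\beta}$ carries a nonzero digit in the position $\beta^{-2}$; then $\beta(xy)$ still has a nonzero digit after the radix point, so $\beta(xy)\notin\Z_\beta$ and hence $L_{\otimes}(\beta)\geq 2$.

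First I would record the arithmetic of the base that the normalization will use. From $P(\beta)=0$ we get $\beta^{3}=a\beta^{2}+b\beta-1$, and dividing by powers of $\beta$ (legitimate since $\beta$ is a unit) the relations $1=a\beta^{-1}+b\beta^{-2}-\beta^{-3}$ and $\beta=a+b\beta^{-1}-\beta^{-2}$, equivalently $\beta^{-1}=-\beta^{2}+a\beta+b$ and $\beta^{-2}=-b\beta^{2}+(ab-1)\beta+(a+b^{2})$. These are exactly the carry rules that turn a digitwise product into a $(\beta,\A)$-representation, and they are the source of the fractional digits: because the rule for $\beta^{3}$ lowers the constant term by $1$, a sufficiently large high-order digit in the naive product is pushed past the radix point after normalization. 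I would then choose the witnesses $x,y\in\Z_\beta$ explicitly (giving their greedy strings over $\A=\{0,1,\dots,a\}$), carry out the schoolbook multiplication, and normalize with the relations above until all digits lie in $\A$, obtaining a string of the shape $(\cdots)\bullet c_{-1}c_{-2}(0)^{\omega}$ with $c_{-2}\neq 0$ (for instance ending in $\bullet 1 1$). Natural candidates are $\beta$-integers lying just below a power of $\beta$: e.g. $x=y$ with $\langle x\rangle_{\beta}=a(b-1)(a-1)$, so that $x=\beta^{3}-(\beta-a)$, whose square overflows several digit positions and whose reduction cascades down into the fractional part; it may be necessary to separate the small values $b=1$ and $b=2$, in the spirit of Lemma~\ref{lem:low-+2}. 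With the string in hand I would invoke the admissibility criterion of \cite{Par} quoted above to certify it is genuinely the greedy expansion: every suffix must be lexicographically less than $d_{\beta}(1)=a[(b-1)(a-1)]^{\omega}$, which for these strings reduces to a handful of short comparisons using only $0<b<a$ and $a\geq 2$.

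The main obstacle is the choice of $(x,y)$: it must work for the whole family $0<b<a$ — in particular for the borderline pairs $a=3,\,b=2$ and $a=2,\,b=1$ flagged as atypical earlier — while keeping the admissibility check uniform, and the digitwise product of two ``large'' $\beta$-integers already produces coefficients of size $O(a^{2})$ whose reduction has to be done carefully. If no single pair works uniformly, I would fall back, exactly as was done for $L_{\oplus}$ in this section, on a general argument for $b$ large together with a short finite case analysis for small $b$. In every case, once an admissible expansion with a nonzero $\beta^{-2}$-digit is produced, the conclusion $L_{\otimes}(\beta)\geq 2$ is immediate from the shift argument above.
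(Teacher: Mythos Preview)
Your strategy---produce explicit $\beta$-integers whose product has a nonzero digit at position $\beta^{-2}$ and verify admissibility via Parry's criterion---is exactly the paper's approach. The gap is that you never actually pin down a working pair: you float the candidate $x=y$ with $\langle x\rangle_\beta=a(b-1)(a-1)$, anticipate that it may fail for small $b$, and fall back on an unspecified case split. That is not yet a proof.

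The paper's witness is far simpler than yours and works uniformly with no case analysis: take $\langle x\rangle_\beta=(a-1)(b+1)$ and $\langle y\rangle_\beta=a$. A direct computation (using $\beta^{-1}=-\beta^{2}+a\beta+b$ and $\beta^{-2}=-b\beta^{2}+(ab-1)\beta+(a+b^{2})$, which you already recorded) gives
\[
xy=(a^{2}-a)\beta+(ab+a)=(a-1)\beta^{2}+\beta+b+(a-b-1)\beta^{-1}+\beta^{-2},
\]
i.e.\ $\langle xy\rangle_\beta=(a-1)1b\bullet(a-b-1)1$. Every digit lies in $\{0,\dots,a-1\}$ because $0<b<a$, so each suffix begins with a digit strictly below $a$ and is automatically $\prec d_\beta(1)=a[(b-1)(a-1)]^\omega$; the last digit is $1\neq 0$, and $L_\otimes(\beta)\geq 2$ follows. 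The moral: a two-digit $x$ times a one-digit $y$ already suffices, and the admissibility check becomes trivial---your three-digit square would require tracking coefficients of size $O(a^{2})$ and does force the case splits you feared.
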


\begin{proof}
Set $\langle x\rangle_{\beta}=(a-1)(b+1)$ and $\langle y\rangle_{\beta}=a$. Then $\langle x\times y\rangle_{\beta}=(a-1)1b\bullet (a-b-1)1$ is the greedy expansion of the product $x\times y$ in base $\beta$ since $0<b<a$. Hence $L_{\otimes}(\beta) \geq 2$.  
\end{proof}

We will improve this lower bound for the case when $b=1$.

\begin{lemma}
Let $\beta$ be the dominant root of the polynomial
\[
P(X)=X^3-aX^2-X+1
\]
where $a\geq 2$. Then $L_{\otimes}(\beta) \geq 3$. 
\end{lemma}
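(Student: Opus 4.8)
The plan is to exhibit two explicit $\beta$-integers whose product has a finite greedy expansion with exactly three nonzero digits after the fractional point, mirroring the strategy used in all the preceding lower-bound lemmas (Lemmas \ref{lem:low++}, \ref{lem:low-+1}, \ref{lem:low-+2}). Recall that for $P(X)=X^3-aX^2-X+1$ we have $b=1$, so the R\'enyi expansion of $1$ is $d_\beta(1)=a[(a-1)0]^{\omega}$ and the alphabet is $\A=\{0,1,\ldots,a\}$; also $\beta^3=a\beta^2+\beta-1$, equivalently $1=a\beta^{-1}+\beta^{-2}-\beta^{-3}$, which is the key relation for rewriting carries.

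\textbf{Step 1: choose the candidate pair.} I would look for short greedy expansions $\langle x\rangle_\beta$ and $\langle y\rangle_\beta$ of $\beta$-integers (each a word over $\A$ satisfying the Parry condition against $a[(a-1)0]^\omega$) whose product, after using $\beta^3=a\beta^2+\beta-1$ to eliminate negative powers down to $\beta^{-3}$ and to propagate carries leftward, yields a word of the form $\ldots\bullet c_{-1}c_{-2}c_{-3}$ with $c_{-3}\neq 0$ and all $c_{-i}\in\A$. A natural guess, by analogy with the $b=1$ improvement in Lemma \ref{lem:low-+2} and the $L_\otimes\ge 2$ lemma just proved (which used $\langle x\rangle_\beta=(a-1)(b+1)$, $\langle y\rangle_\beta=a$), is to take $\langle x\rangle_\beta=(a-1)2$ or $(a-1)(a-1)2$-type words times $\langle y\rangle_\beta=a\,a$ or $(a-1)a$; I would compute $x y\beta^{0}$ as an integer combination of powers of $\beta$ and then reduce. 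One expects the third fractional digit to emerge from a term $-\beta^{-3}$ in the relation $1 = a\beta^{-1}+\beta^{-2}-\beta^{-3}$ being triggered exactly once.

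\textbf{Step 2: perform the carry reduction and check the Parry condition.} Having fixed the pair, I would write out the long-multiplication/carry tableau exactly as in the proof of Lemma \ref{lem:low++}: add the naive digit string to copies of the relation $(1)(-a)(-1)(1)$ (placed at appropriate positions) until every digit lies in $\{0,1,\ldots,a\}$ and only finitely many are nonzero. Then I must verify that every length-$J$ suffix of the resulting string is lexicographically $\prec a[(a-1)0]^\omega$; since the leading digit will be $1<a$ this is essentially automatic, but I would note the relevant comparison explicitly (typically it reduces to checking the first one or two digits against $a$, then against $a-1$). This confirms the string is the genuine greedy expansion $\langle xy\rangle_\beta$, hence $\beta^{2}(xy)\notin\Z_\beta$, giving $L_\otimes(\beta)\ge 3$.

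\textbf{Main obstacle.} The only real difficulty is finding a pair that works uniformly for \emph{all} $a\ge 2$ while keeping all reduced digits in $\A$ and nonnegative — for small $a$ (especially $a=2$, where $d_\beta(1)=2(01)^\omega$ has a very restrictive Parry condition) the carry propagation can overshoot the alphabet or force an extra fractional digit, so I anticipate needing either a mild case split (a generic family for $a\ge 3$ and a separate hand-computation for $a=2$, as was done in Lemma \ref{lem:low-+2}) or a cleverly balanced choice of digits. Once the right pair is identified, the verification is the same routine tableau computation used throughout Section \ref{chap:-+}.
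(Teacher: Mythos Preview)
Your overall strategy---exhibit an explicit product of $\beta$-integers, reduce via the relation $\beta^3=a\beta^2+\beta-1$, verify the Parry condition, and split off small $a$---is exactly the paper's approach. However, your proposal stops short of a proof: you never commit to a pair and carry out the computation, and the candidates you float are unlikely to work. Two-digit words such as $(a-1)2$ multiplied by $(a-1)a$ are too short to force three fractional digits; the paper instead reuses the four-digit integer $\langle x\rangle_\beta=(a-1)0(a-1)a$ from Lemma~\ref{lem:low-+2} and \emph{squares} it, obtaining
\[
\langle x\times x\rangle_\beta=(a-2)20(a-4)31(a-4)2\bullet 0(a-2)1
\]
for $a\ge 4$, with $a=2$ and $a=3$ handled by separate direct computations (your anticipated case split is indeed needed, but one level higher than you guessed).

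Two concrete errors to fix. First, you have the period of the R\'enyi expansion reversed: for $b=1$ one has $d_\beta(1)=a[0(a-1)]^\omega$, not $a[(a-1)0]^\omega$ (your $a=2$ specialization $2(01)^\omega$ happens to be correct, but the general formula is not, and this changes which strings pass the Parry test---for instance $a(a-1)$ is \emph{not} admissible). Second, one of your suggested factors, $\langle y\rangle_\beta=a\,a$, is never a greedy expansion since $aa\succ a0(a-1)\ldots=d^*_\beta(1)$.
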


\begin{proof}
In this case, we have $d_{\beta}(1)=a[0(a-1)]^{\omega}$. Put $\langle x\rangle_{\beta}=(a-1)0(a-1)a$. The product $x\times x$ can be expressed as $(a-2)20(a-4)31(a-4)2\bullet 0(a-2)1$. This representation is also the greedy expansion of $x\times x$ if $a\geq 4$. If $a=2$, we have $\langle x\times x\rangle_{\beta}=1120101\bullet001$. If $a=3$, we obtain $\langle x\times x\rangle_{\beta}=12000002\bullet011$. In all the considered cases, we have exactly three additional digits after the fractional point, thus $L_{\otimes}(\beta) \geq 3$.
\end{proof}

Using the small positive conjugate of $\beta$ we can find an upper bound on $L_{\otimes}(\beta)$ for large values of the coefficient $a$. 

\begin{prop}
Let $\beta$ be the dominant root of the polynomial
\[
P(X)=X^3-aX^2-bX+1
\]
where $0<b<a$. Then $2\leq L_{\otimes}(\beta) \leq 4$ for sufficiently large $a$. 
\end{prop}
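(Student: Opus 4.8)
The plan is to combine the lower bound $L_{\otimes}(\beta)\geq 2$ established above with an upper bound extracted from the $HK$ method, following verbatim the strategy of Proposition \ref{prop:apr++x}.

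First I would collect the data special to this family. Since $d_{\beta}(1)=a[(b-1)(a-1)]^{\omega}$ we have $\lfloor\beta\rfloor=a$, and by the preceding lemma the positive conjugate $\beta'$ of $\beta$ satisfies $0<\beta'\leq\frac{1}{\sqrt a}$. For $a\geq 9$ this gives $\beta'\leq\frac13<1$, so (as recorded in Section \ref{sec:pre}, $\beta'\in(0,1)$ forces $K=1$) we get $K=1$, and moreover
\[
H\leq\frac{\lfloor\beta\rfloor}{1-\beta'}\leq\frac{a}{1-\frac13}=\frac{3a}{2}.
\]

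Next I would substitute these estimates into the multiplicative inequality $\left(\frac{1}{\beta'}\right)^{L_{\otimes}(\beta)}<\frac{H^{2}}{K}$, obtaining $\left(\frac{1}{\beta'}\right)^{L_{\otimes}(\beta)}<\frac{9a^{2}}{4}$, while from $\frac{1}{\beta'}\geq\sqrt a$ we have $(\sqrt a)^{L_{\otimes}(\beta)}\leq\left(\frac{1}{\beta'}\right)^{L_{\otimes}(\beta)}$. Combining the two and applying $\ln$,
\[
L_{\otimes}(\beta)<\frac{\ln\frac{9a^{2}}{4}}{\ln\sqrt a}=4\left(1+\frac{\ln\frac32}{\ln a}\right),
\]
which is strictly below $5$ as soon as $\ln a>4\ln\frac32$, i.e.\ $a\geq 6$; together with the requirement $a\geq 9$ needed for $\beta'\leq\frac13$, this yields $L_{\otimes}(\beta)\leq 4$ for every $a\geq 9$, hence for sufficiently large $a$.

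I do not expect a real obstacle here: the computation is a direct transcription of the $HK$ argument already carried out for the norm $-1$ polynomials, the only new inputs being $\lfloor\beta\rfloor=a$ and $\beta'\leq\frac{1}{\sqrt a}$. The single point demanding a little care is the choice of threshold on $a$, balancing the crude constant $\frac13$ used to bound $\beta'$ against the constant needed to drive the logarithmic term below $\frac14$; a sharper bound on $\beta'$ would lower the threshold, but $a\geq 9$ already suffices. The finitely many remaining bases with $a<9$ fall outside this proposition and would be handled afterwards by plugging numerical approximations of $\beta'$ into the $HK$ method, as in Example \ref{ex:++x}.
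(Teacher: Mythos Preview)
Your proposal is correct and follows essentially the same approach as the paper: it applies the $HK$ method with the positive conjugate bounded by $\frac{1}{\sqrt a}\leq\frac13$, yielding the inequality $L_{\otimes}(\beta)<4\bigl(1+\frac{\ln(3/2)}{\ln a}\bigr)$, and the paper likewise records (in the example immediately following the proposition) that this gives $L_{\otimes}(\beta)\leq 4$ for $a\geq 9$. Your adaptation of $\lfloor\beta\rfloor=a$ in place of the $a-1$ used in Proposition \ref{prop:apr++x} is exactly the distinction between the two families.
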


\begin{example}
Using the upper bound $\frac{1}{3}$ on the positive conjugate we get $2\leq L_{\otimes}(\beta) \leq 4$ for $a\geq 9$. The Table \ref{tab:4} shows the results for the remaining cases. As we can see, there is a stripe with higher upper bounds on $L_{\otimes}(\beta)$, which we have not supported by corresponding lower bounds.    

\begin{table}[h]
\centering 
\begin{tabular}{|c|c|}
\hline
Polynomial & $L_{\otimes}(\beta)$\\
\hline
$X^3-2X^2-X+1$ & $\leq 5$\\
\hline
$X^3-3X^2-X+1$ & $\leq 4$\\
\hline
$X^3-4X^2-X+1$ & $ \leq 4$\\
\hline
$X^3-5X^2-X+1$ & $ \leq 4$\\
\hline
$X^3-6X^2-X+1$ & $ \leq 4$\\
\hline
$X^3-7X^2-X+1$ & $ \leq 4$\\
\hline
$X^3-8X^2-X+1$ & $ \leq 4$\\
\hline
$X^3-3X^2-2X+1$ & $ 2$\\
\hline
$X^3-4X^2-2X+1$ & $ \leq 3$\\
\hline
$X^3-5X^2-2X+1$ & $ \leq 3$\\
\hline
$X^3-6X^2-2X+1$ & $ \leq 3$\\
\hline
$X^3-7X^2-2X+1$ & $ \leq 3$\\
\hline
$X^3-8X^2-2X+1$ & $ \leq 3$\\
\hline
$X^3-4X^2-3X+1$ & $ 2$\\
\hline
\end{tabular}
\quad
\begin{tabular}{|c|c|}
\hline
Polynomial & $L_{\otimes}(\beta)$\\
\hline
$X^3-5X^2-3X+1$ & $2$\\
\hline
$X^3-6X^2-3X+1$ & $2$\\
\hline
$X^3-7X^2-3X+1$ & $ 2$\\
\hline
$X^3-8X^2-3X+1$ & $ \leq 3$\\
\hline
$X^3-5X^2-4X+1$ & $ 2$\\
\hline
$X^3-6X^2-4X+1$ & $ 2$\\
\hline
$X^3-7X^2-4X+1$ & $ 2$\\
\hline
$X^3-8X^2-4X+1$ & $ 2$\\
\hline
$X^3-6X^2-5X+1$ & $ 2$\\
\hline
$X^3-7X^2-5X+1$ & $ 2$\\
\hline
$X^3-8X^2-5X+1$ & $ 2$\\
\hline
$X^3-7X^2-6X+1$ & $ 2$\\
\hline
$X^3-8X^2-6X+1$ & $ 2$\\
\hline
$X^3-8X^2-7X+1$ & $ 2$\\
\hline
\end{tabular}
\caption{$L_{\otimes}(\beta)$ for $a\leq 8$ and small positive conjugate}\label{tab:4}
\end{table}
\end{example}

We summarize our results in the following statement.

\begin{prop}
Let $\beta$ be the dominant root of the polynomial
\[
P(X)=X^3-aX^2-bX+1
\]
where $0<b<a$. Then $2\leq L_{\otimes}(\beta) \leq 4$ except for the case $a=2$ and $b=1$, where $3\leq L_{\otimes}(\beta)\leq5$. 
\end{prop}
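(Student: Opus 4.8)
The proposition is a summary of the bounds obtained above, so the plan is to assemble the lower bounds from the two preceding lemmas, the upper bound from the $HK$-method proposition for large $a$ together with the table for small $a$, and then to single out the one exceptional base. For the lower bound in the generic case, I would quote the lemma applied to $\langle x\rangle_\beta=(a-1)(b+1)$ and $\langle y\rangle_\beta=a$: the hypothesis $0<b<a$ is exactly what keeps every digit of $\langle xy\rangle_\beta=(a-1)1b\bullet(a-b-1)1$ in the alphabet $\{0,\ldots,a\}$ and makes each of its suffixes lexicographically smaller than $d_\beta(1)=a[(b-1)(a-1)]^\omega$, so this string is genuinely a greedy expansion and $L_\otimes(\beta)\geq 2$ whenever $0<b<a$. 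For $b=1$, I would instead use the sharper lemma: with $\langle x\rangle_\beta=(a-1)0(a-1)a$ the product $x^2$ has the representation $(a-2)20(a-4)31(a-4)2\bullet 0(a-2)1$ (with the two ad hoc variants for $a=2,3$), which is greedy and has three fractional digits, giving $L_\otimes(\beta)\geq 3$; in particular this applies when $a=2$, $b=1$.

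For the upper bound, the case $a\geq 9$ is handled by the $HK$ method: the positive conjugate $\beta'$ satisfies $\beta'\leq \tfrac{1}{\sqrt a}\leq\tfrac13$, hence $K=1$ and $H\leq \lfloor\beta\rfloor/(1-\beta')=a/(1-\beta')\leq\tfrac32 a$, so that
\[
\left(\frac{1}{\beta'}\right)^{L_{\otimes}(\beta)}<\frac{H^{2}}{K}\leq\frac{9a^{2}}{4},\qquad\text{hence}\qquad(\sqrt a)^{L_{\otimes}(\beta)}<\frac{9a^{2}}{4},
\]
and taking logarithms forces $L_\otimes(\beta)\leq 4$ for $a\geq 9$. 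This leaves the finitely many polynomials $X^3-aX^2-bX+1$ with $1\leq b<a\leq 8$; for each of these I would substitute a sufficiently accurate numerical value of $\beta'$ into the inequality $(1/\beta')^{L_\otimes}<H^2/K$, improving the crude estimate $H\leq\lfloor\beta\rfloor/(1-\beta')$ whenever it is too weak, and read off $L_\otimes(\beta)\leq 4$ in every case except the dominant root of $X^3-2X^2-X+1$, where the same computation yields only $L_\otimes(\beta)\leq 5$. These are precisely the entries of Table~\ref{tab:4}. Combining the two halves: for every $0<b<a$ with $(a,b)\neq(2,1)$ one has $2\leq L_\otimes(\beta)\leq 4$, while for $(a,b)=(2,1)$ the sharper lower bound and the numerical upper bound give $3\leq L_\otimes(\beta)\leq 5$.

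The one delicate point is the exceptional base $X^3-2X^2-X+1$. There $\lfloor\beta\rfloor=2$ is as small as possible while the positive conjugate $\beta'\approx 0.555$ is not small, so $H^2/K$ only just exceeds $(1/\beta')^5$ and the $HK$ estimate cannot be pushed below $5$; lowering the upper bound to $4$ would require replacing $H\leq\lfloor\beta\rfloor/(1-\beta')$ by a genuinely finer description of $\{|x'|:x\in\Z_\beta\}$ (e.g.\ via the Rauzy-fractal method used for the two-negative-conjugate case), which I would not pursue here. This is consistent with the remark that the set $F$ computed in \cite{FroGaKre} for this base already exhibits sums with three extra fractional digits, so the gap between $3$ and $5$ is left open on purpose.
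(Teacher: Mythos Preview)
Your proposal is correct and follows exactly the paper's approach: the proposition is stated there merely as a summary of the two lower-bound lemmas, the $HK$ upper bound for large $a$, and Table~\ref{tab:4} for $a\leq 8$, and you have reassembled precisely these ingredients with the computations spelled out. The one inaccuracy is in your closing aside: the remark about the set $F$ from \cite{FroGaKre} concerns \emph{sums} (hence $L_\oplus$), not products, so it does not directly bear on the $3\leq L_\otimes\leq 5$ gap; but this is commentary, not part of the argument.
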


Moreover, we can conclude the following proposition, which is similar to the result for $L_{\oplus}(\beta)$. 

\begin{prop} 
Let $z\in \N$. Then for large $a\in\N$ and $\beta>1$ the dominant root of the polynomial
\[
X^3-aX^2-(a-z)X+1
\]
we have $L_{\otimes}(\beta)=2$.
\end{prop}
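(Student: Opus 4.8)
The plan is to mirror the argument of Proposition~\ref{prop:-+z+}, the only change being that the addition estimate in the $HK$ method is replaced by the multiplication estimate $\left(\tfrac{1}{|\beta'|}\right)^{L_{\otimes}(\beta)}<\tfrac{H^2}{K}$.

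First, for the lower bound I would invoke the lemma giving $L_{\otimes}(\beta)\geq 2$ for $X^3-aX^2-bX+1$ with $0<b<a$ (the one using the product of $\langle x\rangle_\beta=(a-1)(b+1)$ and $\langle y\rangle_\beta=a$). Since $z\in\N$, for $a>z$ the coefficient $b=a-z$ satisfies $0<b<a$, so the hypotheses hold and $L_{\otimes}(\beta)\geq 2$ for all sufficiently large $a$.

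Second, for the upper bound I would estimate the positive conjugate exactly as in the proof of Proposition~\ref{prop:-+z+}: writing $\beta'$ for the positive and $\beta''$ for the negative conjugate, $z-a=\beta\beta'+\beta\beta''+\beta'\beta''>a\beta'-(a+1)-1$ gives $\beta'<\tfrac{z+2}{a}$, hence $\beta'\leq\tfrac12$ for large $a$. As $\beta'\in(0,1)$ we have $K=1$, and $H\leq\tfrac{\lfloor\beta\rfloor}{1-\beta'}=\tfrac{a}{1-\beta'}\leq 2a$. The multiplication inequality of the $HK$ method then yields
\[
\left(\frac{a}{z+2}\right)^{L_{\otimes}(\beta)}\leq\left(\frac{1}{\beta'}\right)^{L_{\otimes}(\beta)}<\frac{H^2}{K}\leq 4a^2,
\]
and applying $\ln$ gives
\[
L_{\otimes}(\beta)<\frac{\ln(4a^2)}{\ln a-\ln(z+2)}=\frac{2\ln a+\ln 4}{\ln a-\ln(z+2)}.
\]
The right-hand side tends to $2$ as $a\to\infty$, so it is strictly below $3$ for all sufficiently large $a$, whence $L_{\otimes}(\beta)\leq 2$. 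Combining with the lower bound gives $L_{\otimes}(\beta)=2$ for large $a$.

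The only point requiring care is bookkeeping the thresholds on $a$: one needs $a$ large enough simultaneously so that $a>z$ (for the lower-bound lemma and so that $b=a-z$ sits in the expected range), so that $\beta'\leq\tfrac12$ (for $H\leq 2a$ and $K=1$), and so that $\tfrac{2\ln a+\ln4}{\ln a-\ln(z+2)}<3$. As in the remark after Proposition~\ref{prop:-+z+} one could make this explicit (of order $(z+2)^2$), but the statement only asks for large $a$, so this is unnecessary. I do not anticipate a genuine obstacle: the multiplicative case runs strictly parallel to the already-proven additive one, the extra factor of $H$ in $H^2/K$ merely shifting the limiting constant from $1$ to $2$.
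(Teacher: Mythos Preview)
Your proposal is correct and follows essentially the same route as the paper, which simply says to repeat the procedure of Proposition~\ref{prop:-+z+} with the multiplication estimate in place of the addition estimate. One tiny slip: in your closing parenthetical you quote the explicit threshold as being of order $(z+2)^2$, but carrying your own inequality $\tfrac{2\ln a+\ln 4}{\ln a-\ln(z+2)}<3$ through gives $a\geq 4(z+2)^3$, which is exactly the bound the paper records; this does not affect the argument since only ``large $a$'' is required.
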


\begin{proof}
We use the same procedure as in Proposition \ref{prop:-+z+}. In this case, the statement is satisfied for $a\geq4(z+2)^3$.
\end{proof}

\begin{example}
Let us develop our study of the case $z=1$. In this case, $4(z+2)^3=108$. Hence $L_{\otimes}(\beta)=2$ for $a\geq 108$. Moreover, $L_{\otimes}(\beta)=2$ for $a\in\{3,4,\ldots,8\}$, see Table \ref{tab:4}. We also checked $L_{\otimes}(\beta)$ for $a$ between $8$ and $108$ and we can claim that $L_{\otimes}(\beta)=2$ for all these values of $a$.  
\end{example}

Summarizing it we can state the following corollary. 

\begin{cor}
Let $\beta$ be the dominant root of the polynomial
\[
P(X)=X^3-aX^2-(a-1)X+1
\]
where $a\geq3$. Then $L_{\otimes}(\beta)=2$. 
\end{cor}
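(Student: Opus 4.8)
The plan is to assemble the corollary from three ingredients already established for this family: the general lower bound, the asymptotic upper bound for the subfamily $X^3-aX^2-(a-z)X+1$ with $z=1$, and the finite computation that covers the remaining small values of $a$. First I would record the trivial but essential observation that $P(X)=X^3-aX^2-(a-1)X+1$ is the member $b=a-1$ of the family $X^3-aX^2-bX+1$, and that the hypothesis $a\geq 3$ forces $0<b<a$, so every lemma and proposition proved above for bases with $0<b<a$ is available here, including their specialization to $z=1$.

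The lower bound $L_{\otimes}(\beta)\geq 2$ is then immediate: it is exactly the content of the lemma asserting $L_{\otimes}(\beta)\geq 2$ whenever $0<b<a$. Concretely one takes the $\beta$-integers with greedy expansions $(a-1)(b+1)$ and $a$, computes their product, and checks—using $d_{\beta}(1)=a[(b-1)(a-1)]^{\omega}$—that the resulting string $(a-1)1b\bullet(a-b-1)1$ is a genuine greedy expansion, which has two digits after the radix point.

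For the matching upper bound I would split on the size of $a$. When $a\geq 4(1+2)^3=108$, the proposition giving $L_{\otimes}(\beta)=2$ for roots of $X^3-aX^2-(a-z)X+1$ at large $a$ applies with $z=1$; its proof runs the $HK$ method from the estimate $\beta'<\tfrac{3}{a}$ on the positive conjugate, so that $\beta'\leq\tfrac12$ and $K=1$, $H\leq\tfrac{a}{1-\beta'}\leq 2a$, and $(1/\beta')^{L_{\otimes}(\beta)}<H^{2}/K$ forces $L_{\otimes}(\beta)<2\bigl(1+\tfrac{\ln\frac{3}{2}}{\ln(a/3)}\bigr)$, which is $<3$ once $a\geq 108$, hence $L_{\otimes}(\beta)=2$ there. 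For $3\leq a\leq 107$ the assertion is finite: the values $a\in\{3,\dots,8\}$ are precisely the entries of Table~\ref{tab:4}, each equal to $2$, and the range $9\leq a\leq 107$ is covered by the direct verification described in the preceding example, in which one substitutes numerical approximations of the conjugates into the $HK$ inequality—in the borderline cases replacing the crude bound $H\leq\lfloor\beta\rfloor/(1-\beta')$ by a sharper one—to conclude $L_{\otimes}(\beta)\leq 2$ for each such $a$.

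The only genuinely non-routine part is this finite check for $9\leq a\leq 107$: for such moderate $a$ the $HK$ estimate sits near the boundary between $2$ and $3$, so the main obstacle is making the numerics tight enough (through improved values of $H$) to land on $2$ for every individual $a$, rather than settling for the weaker $L_{\otimes}(\beta)\leq 3$ that the generic estimate yields; everything else in the argument is bookkeeping.
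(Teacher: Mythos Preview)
Your proposal is correct and follows exactly the paper's route: the lower bound from the general lemma for $0<b<a$, the upper bound for $a\geq 108$ from the $z=1$ instance of the asymptotic proposition, Table~\ref{tab:4} for $3\leq a\leq 8$, and the finite numerical check for $9\leq a\leq 107$ recorded in the preceding example. One slip to fix: the displayed $HK$ bound should be $L_{\otimes}(\beta)<2+\dfrac{\ln 36}{\ln(a/3)}$ (from $(a/3)^{L}<4a^{2}$), not $2\bigl(1+\tfrac{\ln(3/2)}{\ln(a/3)}\bigr)$; your version already falls below $3$ at $a=7$, which would make the separate check for $9\leq a\leq 107$ unnecessary and contradicts the threshold $4(z+2)^{3}=108$ you invoke.
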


If we combine our results derived in Sections \ref{chap++} and \ref{chap:-+}, we obtain the main theorem stated in the introduction.

\section{Case with large positive conjugate} \label{chap:+-}

In this section, we discuss the remaining case of bases with the norm $-1$. We consider the minimal polynomials of the form
\[
P(X)=X^3-aX^2+bX+1
\]
where $0\leq b\leq a-3$. In contrast to the previous cases, these bases satisfy property $(PF)$, i.e., a sum of two nonnegative numbers with a finite greedy expansion has also a finite greedy expansion. This fact, in some sense, complicates the estimation of $L_{\oplus}(\beta)$. The nonnegative coefficient $b$ gives us that the positive conjugate of $\beta$ is larger than the absolute value of the negative conjugate. Furthermore, $d_{\beta}(1)=(a-1)(a-b-1)(a-b-2)^{\omega}$ and $\A=\{0,1,\ldots,a-1\}$.

We proceed with a lower bound on $L_{\oplus}(\beta)$.    

\begin{lemma}
Let $\beta$ be the dominant root of the polynomial
\[
P(X)=X^3-aX^2+bX+1
\]
where $0\leq b\leq a-3$. Then $L_{\oplus}(\beta)\geq 2$. 
\end{lemma}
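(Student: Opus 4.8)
The plan is to exhibit an explicit pair of $\beta$-integers $x,y$ whose sum has a finite greedy expansion with exactly two nonzero digits (or at least two digits) after the radix point, so that $\beta^2(x+y)\in\Z_\beta$ but $\beta(x+y)\notin\Z_\beta$. Since these bases satisfy property $(PF)$, I know that $x+y$ will automatically be in $\mathrm{fin}(\beta)$, so the only work is to produce the witness and verify that the representation I write down is genuinely the greedy expansion. As in Lemma~\ref{lem:low++} and Lemma~\ref{lem:low-+1}, the natural candidates are $\beta$-integers whose digit strings are built from the maximal digits of the alphabet $\A=\{0,1,\dots,a-1\}$ and from the pattern appearing in $d_\beta(1)=(a-1)(a-b-1)(a-b-2)^\omega$; carrying when we add two such strings will push digits past the radix point.

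First I would fix a concrete choice, say $\langle x\rangle_\beta$ and $\langle y\rangle_\beta$ each a short string ending in a digit near $a-1$ (for instance something like $(a-1)\,0\,(a-2)$-type patterns, adjusted so that each individual string is admissible, i.e. lexicographically smaller than $d_\beta(1)$ when read from every position). Then I would perform the addition using the relation $\beta^3=a\beta^2-b\beta-1$, i.e. replacing an overflowing leading block $a\cdots$ by $1\,(-a)\,(b)\,(1)$ shifted appropriately, exactly in the tableau style used in Lemma~\ref{lem:low++}. Iterating the normalization both to the left (to absorb carries into higher powers) and to the right (the term $-1$ in $\beta^3=a\beta^2-b\beta-1$ forces a contribution at lower powers, which is precisely what generates the fractional digits) should terminate after finitely many steps in a string with two digits after the point.

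The two things to check carefully are: (i) every digit in the final string lies in $\{0,1,\dots,a-1\}$ — here the hypothesis $0\le b\le a-3$ gives the slack needed, e.g. quantities like $a-b-2\ge 0$ and $b+1\le a-2$ stay in range; and (ii) the resulting string is admissible, i.e. every suffix is lexicographically below $(a-1)(a-b-1)(a-b-2)^\omega$, which I would verify by comparing the leading digits block by block (the first digit of each relevant suffix is at most $a-1$, and when it equals $a-1$ the next digit must be compared against $a-b-1$, etc.). Finally I would confirm that the penultimate fractional digit is nonzero, so that $\beta(x+y)$ still has a nonzero digit after the point and hence is not a $\beta$-integer, giving $L_\oplus(\beta)\ge 2$.

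The main obstacle is choosing the witness strings so that \emph{both} addends are themselves admissible \emph{and} the sum, after normalization, is admissible with exactly two trailing fractional digits; for a one-parameter family indexed by $(a,b)$ this requires the inequalities in step (i)/(ii) above to hold uniformly in $b$ across the whole range $0\le b\le a-3$, and it may be that a single formula does not work for the smallest values of $a$ (as happened with the exceptional polynomial $X^3-2X^2-X+1$ in the two earlier cases), so a short separate check of the boundary cases $a=3$ or $b=a-3$ might be needed.
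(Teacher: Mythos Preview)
Your strategy---exhibit an explicit pair $x,y\in\Z_\beta$ whose sum has a greedy expansion with exactly two fractional digits---is exactly the paper's, but you have made the execution much harder than necessary and, crucially, never fixed a concrete witness (``something like $(a-1)\,0\,(a-2)$-type patterns, adjusted'' is not yet a proof). The paper's choice is the simplest possible: $x=a-1$ and $y=1$, two single-digit $\beta$-integers. Their sum is $a$, and dividing the relation $\beta^3=a\beta^2-b\beta-1$ by $\beta^2$ gives immediately
\[
a=\beta+b\beta^{-1}+\beta^{-2},
\]
i.e.\ $\langle a\rangle_\beta=10\bullet b1$. Admissibility is trivial: since $0\le b\le a-3$ forces $a\ge 3$, every digit in the string $1,0,b,1$ is strictly less than $a-1$, so every suffix lies below $d_\beta(1)=(a-1)(a-b-1)(a-b-2)^\omega$ already at its first symbol. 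No tableau manipulation, no boundary cases, no separate treatment of small $a$ is needed.

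One minor slip in your outline: to conclude $\beta(x+y)\notin\Z_\beta$ you need the \emph{last} fractional digit to be nonzero, not the penultimate one; with the paper's witness that last digit is $1$.
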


\begin{proof}
In this case, we have $d_{\beta}(1)=(a-1)(a-b-1)(a-b-2)^{\omega}$. Let $\langle x\rangle_{\beta}=a-1$ and $\langle y\rangle_{\beta}=1$. Then $\langle x+y\rangle_{\beta}=10\bullet b1$. Obviously, this representation is the greedy expansion of $x+y$. 
\end{proof}

In the previous sections, we have derived a useful estimate of the positive conjugate of $\beta$. In this case, we cannot provide similar results. However, we will show how to find another upper bound on the positive conjugate, which we will use in the following application of the $HK$ method.   

\begin{lemma}
Let $\beta$ be the dominant root of the polynomial
\[
P(X)=X^3-aX^2+bX+1
\]
where $0\leq b\leq a-3$. Then the positive conjugate of $\beta$ is less than or equal to
\[
\frac{b+\sqrt{a-1}+\frac{2}{\sqrt{a-1}}}{a-1}.
\]
\end{lemma}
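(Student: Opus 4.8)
The plan is to pin down the two conjugates of $\beta$ explicitly in terms of $\beta$, $a$, $b$, and then estimate the positive one using nothing more than $\lfloor\beta\rfloor=a-1$. Write $\beta'$ for the positive conjugate and $\beta''$ for the negative one. Dividing the minimal equation $\beta^{3}-a\beta^{2}+b\beta+1=0$ by $\beta^{2}$ gives $\beta-a+\frac{b}{\beta}+\frac{1}{\beta^{2}}=0$, so together with Vieta's formulas
\[
\beta'+\beta'' \;=\; a-\beta \;=\; \frac{b}{\beta}+\frac{1}{\beta^{2}}, \qquad \beta'\beta''\;=\;-\frac{1}{\beta}.
\]
Hence $\beta'$ and $\beta''$ are the two roots of $t^{2}-St-\frac{1}{\beta}$, where $S:=\frac{b}{\beta}+\frac{1}{\beta^{2}}\ge 0$; since the product of the roots is negative and $\beta'>0$, the conjugate $\beta'$ is the larger root, $\beta'=\frac{1}{2}\bigl(S+\sqrt{S^{2}+\frac{4}{\beta}}\,\bigr)$.

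Next I would bound the square root crudely by $\sqrt{S^{2}+\frac{4}{\beta}}\le S+\frac{2}{\sqrt{\beta}}$ (subadditivity of the square root), which yields
\[
\beta'\;\le\; S+\frac{1}{\sqrt{\beta}}\;=\;\frac{b}{\beta}+\frac{1}{\beta^{2}}+\frac{1}{\sqrt{\beta}}.
\]
Finally, since $d_{\beta}(1)=(a-1)(a-b-1)(a-b-2)^{\omega}$ we have $\lfloor\beta\rfloor=a-1$, so $\beta\ge a-1$; replacing $\beta$ by $a-1$ in each of the three decreasing terms above, and using that $a\ge 3$ (forced by $0\le b\le a-3$) so that $\frac{1}{(a-1)^{2}}\le\frac{2}{(a-1)^{3/2}}$, we obtain
\[
\beta'\;\le\;\frac{b}{a-1}+\frac{1}{(a-1)^{2}}+\frac{1}{\sqrt{a-1}}\;\le\;\frac{b}{a-1}+\frac{2}{(a-1)^{3/2}}+\frac{1}{\sqrt{a-1}}\;=\;\frac{b+\sqrt{a-1}+\frac{2}{\sqrt{a-1}}}{a-1},
\]
which is exactly the asserted bound.

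I do not expect a genuine obstacle here: the argument is just Vieta's formulas, the quadratic formula, and the elementary inequality $\sqrt{u+v}\le\sqrt{u}+\sqrt{v}$. The only two points to watch are checking the sign of $\beta'\beta''$, so that $\beta'$ genuinely is the larger root of the quadratic, and verifying that the crude term-by-term replacement of $\beta$ by $a-1$ still collects into the single fraction displayed in the statement, which it does precisely because $a\ge 3$. The estimate is far from tight, but this is harmless since it is only used as input to the subsequent $HK$-method computation for $L_{\oplus}(\beta)$.
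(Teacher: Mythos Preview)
Your argument is correct. Both proofs rest on Vieta's formulas and the inequality $\beta\ge a-1$, but they organize the computation differently. The paper first isolates a bound on the \emph{negative} conjugate, $|\beta''|\le\frac{1}{\sqrt{a-1}}$ (via $|\beta\beta'\beta''|=1$, exactly as in Lemma~\ref{lemma:root1}), and then feeds this together with $a-1\le\beta\le a$ into the second symmetric function $b=\beta\beta'+\beta\beta''+\beta'\beta''$ to extract the bound on $\beta'$. You instead bypass any separate estimate of $\beta''$: you write $\beta',\beta''$ as the roots of the quadratic $t^{2}-St-\tfrac{1}{\beta}$ and read off $\beta'$ directly from the quadratic formula, bounding the radical via $\sqrt{u+v}\le\sqrt u+\sqrt v$. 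Your route is slightly more self-contained (it does not appeal to an earlier lemma), while the paper's route makes explicit the intermediate fact $|\beta''|\le\frac{1}{\sqrt{a-1}}$, which is independently useful. Both land on the same final inequality; your extra slack step $\frac{1}{(a-1)^2}\le\frac{2}{(a-1)^{3/2}}$ is harmless since $a\ge3$.
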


\begin{proof}
Let $\beta'$ be the positive conjugate of $\beta$ and $\beta''$ be the negative one. In the same manner as in previous sections we can prove that $|\beta''|\leq \frac{1}{\sqrt{a-1}}$. Futhermore, we know that $\lfloor\beta\rfloor=a-1$ and $\lceil\beta\rceil=a$. Therefore
\[
b=\beta\beta'+\beta\beta''+\beta'\beta''\geq (a-1)\beta'-a\frac{1}{\sqrt{a-1}}-\frac{1}{\sqrt{a-1}}=(a-1)\beta'-\frac{a-1}{\sqrt{a-1}}-
\] 
\[
-\frac{2}{\sqrt{a-1}}=
(a-1)\beta'-\sqrt{a-1}-\frac{2}{\sqrt{a-1}}.
\]
Rearranging this inequality we obtain the statement of the proposition.   
\end{proof}

If we fix $b$, then the positive conjugate of $\beta$ is small for large values of $a$. Considering this fact we can state the following proposition.

\begin{prop}
Let $\beta$ be the dominant root of the polynomial
\[
P(X)=X^3-aX^2+bX+1
\]
where $0\leq b\leq a-3$. Then for every $b\in\N_{0}$ there exists $a_0\in\N$ such that for every $a\geq a_0$ we have $L_{\oplus}(\beta)=2$.
\end{prop}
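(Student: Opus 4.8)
The plan is to combine the lower bound $L_{\oplus}(\beta)\geq 2$ (already proved) with an upper bound $L_{\oplus}(\beta)\leq 2$ obtained by the $HK$ method, where now we must be careful because $b$ is held fixed while $a\to\infty$. First I would invoke the previous lemma to write $\beta'\leq \frac{b+\sqrt{a-1}+2/\sqrt{a-1}}{a-1}$ for the positive conjugate $\beta'$. For fixed $b$ this upper bound behaves like $\frac{1}{\sqrt{a-1}}(1+o(1))$ as $a\to\infty$; in particular there is $a_0=a_0(b)$ so that $\beta'\leq \tfrac13$ for all $a\geq a_0$, which guarantees $K=1$ in the $HK$ method (since $\beta'\in(0,1)$).

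Next I would estimate $H$. Using $H\leq \frac{\lfloor\beta\rfloor}{1-\beta'}=\frac{a-1}{1-\beta'}\leq \frac{3}{2}(a-1)$ for $a\geq a_0$, the $HK$ inequality gives
\[
\left(\frac{1}{\beta'}\right)^{L_{\oplus}(\beta)}<\frac{2H}{K}\leq 3(a-1).
\]
On the other hand, from $\beta'\leq \frac{b+\sqrt{a-1}+2/\sqrt{a-1}}{a-1}$ we get a lower bound on $1/\beta'$ of the form $\frac{1}{\beta'}\geq \frac{a-1}{b+\sqrt{a-1}+2/\sqrt{a-1}}$, and for fixed $b$ this is $\sqrt{a-1}(1+o(1))$, so for $a$ large enough (enlarging $a_0$ if necessary) we have $\frac{1}{\beta'}\geq (a-1)^{2/5}$, say. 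Plugging this in yields $(a-1)^{(2/5)L_{\oplus}(\beta)}<3(a-1)$, hence $L_{\oplus}(\beta)<\frac{5}{2}\cdot\frac{\ln(3(a-1))}{\ln(a-1)}=\frac{5}{2}\bigl(1+\frac{\ln 3}{\ln(a-1)}\bigr)$, which is $<3$ once $a$ is large. Since $L_{\oplus}(\beta)$ is an integer and $\geq 2$, this forces $L_{\oplus}(\beta)=2$ for all $a\geq a_0(b)$.

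The main obstacle — and the reason the statement is phrased ``for every $b$ there exists $a_0$'' rather than uniformly — is that the conjugate estimate $\beta'\lesssim 1/\sqrt{a-1}$ degrades as $b$ grows: the numerator $b+\sqrt{a-1}+2/\sqrt{a-1}$ is only $\sqrt{a-1}(1+o(1))$ when $b$ is fixed, so the threshold $a_0$ at which $\frac{1}{\beta'}$ dominates $(a-1)^{2/5}$ (or whatever exponent $>2/\ln 3\approx 1.82$ in the denominator we need to beat the $\ln 3(a-1)$ numerator) must be taken depending on $b$. One should therefore carry out the two asymptotic comparisons — $\beta'\leq\frac13$ and $\frac{1}{\beta'}\geq(a-1)^{2/5}$ — explicitly as inequalities in $a$ and $b$, take the maximum of the resulting thresholds, and record that as $a_0$. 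Everything else is the routine $HK$ bookkeeping already performed in Sections~\ref{chap++} and~\ref{chap:-+}. It is worth remarking that, unlike the earlier cases, property $(PF)$ holds here, so the hypothesis $x+y\in\mathrm{fin}(\beta)$ in the definition of $L_{\oplus}(\beta)$ is automatic for $\beta$-integers; this does not change the argument but explains why no separate finiteness discussion is needed.
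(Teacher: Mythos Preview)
Your approach is essentially the same as the paper's: both combine the already-established lower bound $L_{\oplus}(\beta)\geq 2$ with the $HK$ method applied to the positive conjugate, using the lemma's estimate $\beta'\leq \frac{b+\sqrt{a-1}+2/\sqrt{a-1}}{a-1}$ and the observation that for fixed $b$ this behaves like $c_b/\sqrt{a-1}$. The paper packages the asymptotic as $\beta'\leq c_b/\sqrt{a-1}$ and arrives at $L_{\oplus}(\beta)<2\bigl(1+\frac{\ln(4c_b^2)}{\ln(a-1)-\ln c_b^2}\bigr)$, whereas you take a slightly weaker exponent $(a-1)^{2/5}$ for $1/\beta'$ and obtain $L_{\oplus}(\beta)<\tfrac{5}{2}\bigl(1+\frac{\ln 3}{\ln(a-1)}\bigr)$; either way the bound drops below $3$ for $a\geq a_0(b)$, forcing $L_{\oplus}(\beta)=2$.

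Two small remarks. First, the justification ``$\beta'\leq\tfrac13$, which guarantees $K=1$'' is misplaced: $K=1$ follows simply from $\beta'\in(0,1)$, as noted in the preliminaries, and holds regardless of the $\tfrac13$ bound (which you only need to control $H$). Second, your closing comment that $(PF)$ makes the hypothesis $x+y\in\mathrm{fin}(\beta)$ automatic is only true for nonnegative $x,y$; since $\Z_\beta$ contains negatives and these bases do not satisfy $(F)$, the finiteness hypothesis is not in general redundant. Neither point affects the validity of the argument.
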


\begin{proof}
As before, we use the $HK$ method applied on the positive conjugate $\beta'$ of our base $\beta$. From the previous lemma, we have the upper bound on $\beta'$, i.e.,
\[
\beta'\leq \frac{b+\sqrt{a-1}+\frac{2}{\sqrt{a-1}}}{a-1}. 
\]
Let $b$ be fixed. Then $\beta'\leq \frac{1}{2}$ for large $a$. We can also assume that our bound has similar properties as the fuction $\frac{1}{\sqrt{a-1}}$ for large $a$, so the conjugate $\beta'$ can be estimated by $\beta'\leq\frac{c_b}{\sqrt{a-1}}$ where $c_b$ is a constant depending on the coefficient $b$. 

Applying the $HK$ method we obtain 
\[
L_{\oplus}(\beta)<2\left(1+\frac{\ln(4c_b^2)}{\ln(a-1)-\ln c_b^2}\right),
\]
which completes the proof.
\end{proof}

In the same manner we can derive some assertions about the number $L_{\otimes}(\beta)$. Let us start with a lower bound.

\begin{lemma}
Let $\beta$ be the dominant root of the polynomial
\[
P(X)=X^3-aX^2+bX+1
\]
where $0\leq b\leq a-3$. Then for every $b\in\N\setminus\{1\}$ there exists $a_0\in\N$ such that for every $a\geq a_0$ we have $L_{\otimes}(\beta)\geq4$. 
\end{lemma}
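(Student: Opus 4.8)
The plan is to exhibit, for each fixed $b\in\mathbb{N}\setminus\{1\}$ and all sufficiently large $a$, a single $\beta$-integer $x$ whose square $x\times x$ has a finite greedy expansion with exactly four nonzero digits after the fractional point. Since the R\'enyi expansion is $d_\beta(1)=(a-1)(a-b-1)(a-b-2)^\omega$ with alphabet $\A=\{0,1,\ldots,a-1\}$, and since $\beta^3=a\beta^2-b\beta-1$, the idea is to pick $x$ with a short greedy expansion (a few digits close to $a-1$, say $\langle x\rangle_\beta=(a-1)c_1c_2\ldots$ for suitable constants $c_i$ depending only on $b$), compute $x^2=\sum_{i,j}x_ix_j\beta^{i+j}$ as an integer combination of powers of $\beta$, and then repeatedly apply the rewriting rule $\beta^{k+3}=a\beta^{k+2}-b\beta^{k+1}-\beta^k$ to carry all negative and oversized coefficients leftward until we reach a string over $\A$. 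I would organize the computation exactly as in the tableaux already used in the proofs of Lemma \ref{lem:low++} and the lower bounds for $L_\otimes$ in Section \ref{chap:-+}: write the "raw" coefficient vector, then add the appropriate multiples of $(1,-a,b,1)$ (the coefficient pattern of $X^3-aX^2+bX+1$, noting the sign of the constant term is $+1$ here) shifted to the right positions.

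The concrete steps, in order, are: (i) guess the shape of $x$ — I would try something like $\langle x\rangle_\beta=(a-1)(a-b-2)(a-b-2)\cdots$ with three or four digits, mimicking a truncation of $d^*(\beta)$ so that the square stays controlled, and adjust the last digit to force four fractional digits; (ii) multiply out $x^2$ symbolically in $\mathbb{Z}[\beta]$, collecting the coefficient of each $\beta^k$ as an explicit polynomial in $a$ and $b$; (iii) perform the greedy normalization by the carry rule, producing a candidate representation $z_m\cdots z_0\bullet z_{-1}z_{-2}z_{-3}z_{-4}$ whose digits are explicit polynomials in $a,b$; (iv) verify that for $a\ge a_0(b)$ every digit lies in $\{0,1,\ldots,a-1\}$ and that the tail condition of Parry's theorem (Theorem 2.1/2.2 in the excerpt) holds, i.e. every suffix is lexicographically smaller than $t_1t_2t_3\ldots=(a-1)(a-b-1)(a-b-2)(a-b-2)\cdots$ — this is automatic once the leading digit of each relevant window is $\le a-2$ or ties are broken appropriately; (v) check $z_{-4}\neq 0$, which gives $\beta^4 x^2\in\Z_\beta$ but $\beta^3 x^2\notin\Z_\beta$, hence $L_\otimes(\beta)\ge 4$. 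The restriction $b\neq 1$ should enter precisely in step (iv): for $b=1$ the R\'enyi expansion degenerates ($d_\beta(1)=a\,0(a-1)\cdots$ as noted earlier in the paper) and the carry rule $\beta^{k+3}=a\beta^{k+2}-\beta^{k+1}-\beta^k$ produces a digit that either vanishes or violates admissibility, collapsing the length to $3$ — consistent with the separate lemma already proved for $X^3-aX^2-X+1$; I would remark on this rather than fight it.

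The main obstacle is the bookkeeping in steps (ii)–(iii): choosing $x$ so that after normalization the fractional part has length exactly four (not three, not five) for all large $a$ uniformly in $b$, while keeping every digit a genuine element of $\A$. A naive choice of $x$ tends to give either a shorter tail (if $x$ is too "round") or digits that overflow past $a-1$ or go negative in a way the single rule cannot fix in bounded length. The fix is to calibrate the low-order digits of $x$: set $\langle x\rangle_\beta = (a-1)\,u\,v\,w$ with $u,v,w$ linear in $b$, compute the four fractional coefficients of $x^2$ symbolically, and solve for $u,v,w$ so that the coefficient of $\beta^{-4}$ is a fixed positive constant (e.g. $b-1$, which is why $b\ge 2$ is needed) while the coefficient of $\beta^{-5}$ and below is zero. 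Once the symbolic identity is in hand, the admissibility check and the threshold $a_0(b)$ follow by the same elementary estimates ($|\beta''|\le 1/\sqrt{a-1}$, $\lfloor\beta\rfloor=a-1$) already established, so only the algebra requires care; everything else is a routine application of Parry's criterion.
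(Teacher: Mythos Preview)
Your overall strategy --- exhibit a $\beta$-integer $x$, compute $x^2$, and verify via Parry's criterion that the normalized expansion has exactly four fractional digits --- is the same as the paper's. However, you have made the problem far harder than it is, and as written your proposal is a plan with undetermined parameters rather than a proof.

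The paper simply takes the one-digit $\beta$-integer $x=a-1$. Then $(a-1)^2$ has the representation
\[
(a-2)\,b\;\bullet\;(a-2b+1)\,(b^2-b-2)\,(2b-1)\,1,
\]
and for $b\geq 2$ and $a$ large this is already the greedy expansion: every digit is nonnegative (the critical one is $b^2-b-2=(b-2)(b+1)\geq 0$), every digit is at most $a-2$ once $a$ exceeds $b^2$, and hence every suffix is lexicographically below $d_\beta^*(1)=((a-1)(a-b-1)(a-b-2))^\omega$. That is the entire argument. Your proposed three- or four-digit ansatz $\langle x\rangle_\beta=(a-1)uvw$ with $u,v,w$ to be solved for is unnecessary, and you never actually carry out the computation or exhibit a specific $x$, so no proof is given.

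Your diagnosis of why $b=1$ is excluded is also off. The R\'enyi expansion here is $d_\beta(1)=(a-1)(a-b-1)(a-b-2)^\omega$, which does not degenerate at $b=1$; the polynomial $X^3-aX^2-X+1$ you mention belongs to a different family treated in another section. In the paper's proof the obstruction at $b=1$ (and at $b=0$) is purely that the digit $b^2-b-2$ becomes negative, so the displayed string is no longer an admissible representation; those two values of $b$ are handled by a separate lemma with different choices of $x$.
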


\begin{proof}
Set $\langle x\rangle_{\beta}=a-1$ and consider the greedy expansion of $x\times x$. One representation of this product is equal to $(a-2)b\bullet (a-2b+1)(b^2-b-2)(2b-1)1$. If $b>1$ and $a$ is large enough, then we have obtained the greedy expansion of $x\times x$.
\end{proof}

Note that we excluded the case when $b\in\{0,1\}$. We will solve it separately in the following lemma.

\begin{lemma}
Let $\beta$ be the dominant root of the polynomial
\[
P(X)=X^3-aX^2+bX+1
\]
where $0\leq b\leq a-3$ and $b\in\{0,1\}$. Then $L_{\otimes}(\beta)\geq 4$.
\end{lemma}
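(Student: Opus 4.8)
The plan is to exhibit, for each of the two excluded values $b\in\{0,1\}$, an explicit pair of $\beta$-integers whose product has a finite greedy expansion with at least four nonzero digits after the radix point, just as in the previous lemma but now handling the two small cases by hand. First I would recall that for this family $d_{\beta}(1)=(a-1)(a-b-1)(a-b-2)^{\omega}$ and $\A=\{0,1,\ldots,a-1\}$, and I would use the relation $\beta^3=a\beta^2-b\beta-1$ to rewrite products. The natural candidate, mirroring the preceding proof, is again $\langle x\rangle_\beta = a-1$, i.e. $x=\beta-1$ (note $a-1=\lfloor\beta\rfloor$, so $x\in\Z_\beta$), and to compute $\langle x\times x\rangle_\beta = \langle (\beta-1)^2\rangle_\beta = \langle \beta^2-2\beta+1\rangle_\beta$.

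For $b=1$ the minimal polynomial is $X^3-aX^2+X+1$, so $\beta^3 = a\beta^2-\beta-1$; the representation $(a-2)b\bullet(a-2b+1)(b^2-b-2)(2b-1)1$ from the previous lemma degenerates at $b=1$ because the digit $b^2-b-2 = -2$ is negative, which is exactly why this case is excluded there. I would instead carry the carry/borrow arithmetic one more step: write $\beta^2-2\beta+1$ in the alphabet by successively subtracting suitable multiples of $\beta^2$, $\beta$, $1$ and pushing the deficits into negative powers using $1 = a\beta^{-1}-\beta^{-2}-\beta^{-3}$ (the reciprocal relation). This should produce a four-digit fractional tail; I would then verify the Parry condition $(a_i)_{i=-\infty}^J\prec t_1t_2t_3\ldots$ for all $J$, which for large $a$ reduces to checking that each fractional digit lies in $\{0,\ldots,a-1\}$ and that no forbidden prefix $(a-1)(a-b-1)(a-b-2)\ldots$ occurs — routine once the digits are pinned down, and in any case only finitely many small $a$ need separate inspection, handled numerically as in the tables.

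For $b=0$ the polynomial is $X^3-aX^2+1$, so $\beta^3=a\beta^2-1$ and $1=a\beta^{-1}-\beta^{-3}$; here the fractional expansions are sparser (no $\beta^{-2}$ term), so the analogous computation of $\langle(\beta-1)^2\rangle_\beta$ or, if that falls short, of $\langle(\beta-1)(\beta^2-1)\rangle_\beta$ or a similar small product, should be carried out to locate four nonzero fractional digits. I expect that one uniform choice of $x$ (or $x,y$) works for all sufficiently large $a$ in each case, with the finitely many exceptions below the threshold already covered by the computer-assisted results reported in the surrounding examples.

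The main obstacle is purely bookkeeping: choosing the candidate $\beta$-integers so that the product's greedy expansion genuinely has length exactly (or at least) four after the point, and then confirming the Parry admissibility of that expansion uniformly in $a$. There is no conceptual difficulty — the relation $\beta^3=a\beta^2-b\beta-1$ makes every polynomial in $\beta$ with small coefficients reducible to an admissible digit string — but the borrow arithmetic at $b=1$ and the sparsity at $b=0$ each require a careful separate computation, and one must be slightly careful that the leading part of the product lands in $\Z_\beta$ while the tail has the claimed length, rather than collapsing. Once the digit strings are written out, the verification that they are greedy is immediate and the lower bound $L_{\otimes}(\beta)\geq 4$ follows from the definition.
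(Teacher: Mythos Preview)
Your proposal has two problems, one minor and one fatal.

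The minor one: the $\beta$-integer with greedy expansion $\langle x\rangle_\beta=a-1$ is the ordinary integer $x=a-1$, not $\beta-1$. The expression $\beta^2-2\beta+1$ is therefore not the product you want; you need $(a-1)^2=a^2-2a+1$.

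The fatal one: with the correct candidate $x=a-1$, the fractional tail \emph{does} collapse, in both cases. For $b=1$ one has $\beta+\beta^{-1}+\beta^{-2}=a$ (divide $\beta^3=a\beta^2-\beta-1$ by $\beta^2$), hence
\[
(a-1)^2=(a-2)a+1=(a-2)\beta+1+(a-2)\beta^{-1}+(a-2)\beta^{-2},
\]
so $\langle(a-1)^2\rangle_\beta=(a-2)1\bullet(a-2)(a-2)$, with only two fractional digits. For $b=0$ one has $\beta+\beta^{-2}=a$, and the same computation gives $\langle(a-1)^2\rangle_\beta=(a-2)1\bullet0(a-2)$, again only two fractional digits. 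So the very borrow you propose to carry out is precisely what makes the length-4 tail from the generic formula $(a-2)b\bullet(a-2b+1)(b^2-b-2)(2b-1)1$ shrink to length 2 when $b\in\{0,1\}$. Your closing caveat that ``one must be slightly careful\ldots rather than collapsing'' is exactly on point, but you dismissed it too quickly: here it does collapse, and no amount of bookkeeping with the single-digit $x$ recovers four fractional places.

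The paper's proof abandons the single-digit candidate and takes a three-digit $\beta$-integer in each case: $\langle x\rangle_\beta=(a-2)(a-1)(a-1)$ for $b=0$ and $\langle x\rangle_\beta=(a-1)(a-2)(a-3)$ for $b=1$ (the latter is essentially $\beta^3-1$). Squaring these produces explicit greedy expansions with exactly four fractional digits, uniformly in $a$, with no appeal to numerical tables. Finding such a candidate is the actual content of the lemma; once you know which $x$ to square, the verification is indeed routine.
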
 

\begin{proof}
Let $b=0$. Then the R\'{e}nyi expansion of 1 has the form $d_{\beta}(1)=(a-1)(a-1)(a-2)^{\omega}$. Put $\langle x\rangle_{\beta}=(a-2)(a-1)(a-1)$. The greedy expansion of $x\times x$ is equal to 
\[
(a-2)0(a-1)1(a-2)(a-3)\bullet (a-1)011.
\] 
Note that $a-1\geq 2$.

In the second case, i.e., $b=1$, we have $d_{\beta}(1)=(a-1)(a-2)(a-3)^{\omega}$. Put $\langle x\rangle_{\beta}=(a-1)(a-2)(a-3)$. The greedy expansion of $x\times x$ is equal to
\[
(a-1)(a-2)(a-4)210\bullet(a-4)131.
\]
Note that $a\geq 4$ in this case.   
\end{proof}

An upper bound follows. 

\begin{prop}
Let $\beta$ be the dominant root of the polynomial
\[
P(X)=X^3-aX^2+bX+1
\]
where $0\leq b\leq a-3$. Then for every $b\in\N_{0}$ there exists $a_0\in\N$ such that for every $a\geq a_0$ we have $L_{\otimes}(\beta)=4$.
\end{prop}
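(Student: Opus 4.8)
The plan is to combine the lower bound $L_{\otimes}(\beta)\geq 4$, which the preceding two lemmas establish for all $b\in\N_0$ (once $a$ is large, depending on $b$), with a matching upper bound $L_{\otimes}(\beta)\leq 4$ obtained by the $HK$ method, exactly as in the earlier sections. So fix $b\in\N_0$. The two lemmas give some $a_1\in\N$ such that $L_{\otimes}(\beta)\geq 4$ for all $a\geq a_1$; it remains to produce $a_2$ with $L_{\otimes}(\beta)\leq 4$ for $a\geq a_2$, and then take $a_0=\max\{a_1,a_2\}$.

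For the upper bound I would proceed as in the proof of the analogous proposition for $L_{\oplus}$. Let $\beta'$ be the positive conjugate; the lemma above gives
\[
\beta'\leq\frac{b+\sqrt{a-1}+\tfrac{2}{\sqrt{a-1}}}{a-1},
\]
and for $a$ large (with $b$ fixed) this is $\leq\tfrac12$, hence $K=1$ in the $HK$ method, and moreover $\beta'\leq\tfrac{c_b}{\sqrt{a-1}}$ for a constant $c_b$ depending only on $b$. Then $H\leq\frac{\lfloor\beta\rfloor}{1-\beta'}\leq 2(a-1)$, and the multiplicative $HK$ inequality $\bigl(\tfrac{1}{\beta'}\bigr)^{L_{\otimes}(\beta)}<\tfrac{H^2}{K}$ gives
\[
(\sqrt{a-1}/c_b)^{L_{\otimes}(\beta)}\leq\left(\frac{1}{\beta'}\right)^{L_{\otimes}(\beta)}<\frac{H^2}{K}\leq 4(a-1)^2.
\]
Taking logarithms yields
\[
L_{\otimes}(\beta)<\frac{\ln\!\bigl(4(a-1)^2\bigr)}{\ln\sqrt{a-1}-\ln c_b}=4\left(1+\frac{\ln(2c_b^{2})}{\ln(a-1)-\ln c_b^{2}}\right),
\]
and the right-hand side tends to $4$ as $a\to\infty$, so there is $a_2$ with $L_{\otimes}(\beta)\leq 4$ for all $a\geq a_2$. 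Combining the two bounds gives $L_{\otimes}(\beta)=4$ for $a\geq a_0$.

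The routine part is the arithmetic with logarithms, which mirrors what has already been done several times. The one point that deserves genuine care, and which I would expect to be the main obstacle, is justifying the step $\beta'\leq c_b/\sqrt{a-1}$ uniformly enough that the resulting $a_2$ depends only on $b$: the bound from the lemma is $\frac{b+\sqrt{a-1}+2/\sqrt{a-1}}{a-1}$, and writing this as $\frac{1}{\sqrt{a-1}}\bigl(1+\tfrac{b+2/\sqrt{a-1}}{\sqrt{a-1}}\bigr)$ shows that for $a$ beyond a threshold depending on $b$ one may take any $c_b>1$; a clean choice such as $c_b=2$ valid for all $a\geq (b+3)^2$ (say) would make the estimate entirely explicit, after which both $a_1$ and $a_2$ are explicit functions of $b$ and $a_0$ can be named. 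A secondary bookkeeping issue is that the lower-bound lemmas split into the generic case $b\in\N\setminus\{1\}$ and the special cases $b\in\{0,1\}$, so the statement should be assembled by first invoking whichever lower-bound lemma applies and then the common upper bound; no new idea is needed there.
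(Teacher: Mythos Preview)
Your proposal is correct and follows essentially the same approach as the paper's own proof: combine the lower bound $L_{\otimes}(\beta)\geq 4$ from the two preceding lemmas with an upper bound obtained by the $HK$ method applied to the positive conjugate, using the estimate $\beta'\leq c_b/\sqrt{a-1}$ to arrive at the identical inequality
\[
L_{\otimes}(\beta)<4\left(1+\frac{\ln(2c_b^{2})}{\ln(a-1)-\ln c_b^{2}}\right).
\]
Your write-up is in fact more explicit than the paper's (which states only this final inequality), particularly in justifying the passage to $\beta'\leq c_b/\sqrt{a-1}$ and in noting the case split $b\in\{0,1\}$ versus $b\geq 2$ for the lower bound.
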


\begin{proof}
From the previous lemmas we have the lower bound on $L_{\otimes}(\beta)$. Using the estimate of the positive conjugate we obtain the inequality
\[
L_{\otimes}(\beta)<4\left(1+\frac{\ln(2c_b^2)}{\ln(a-1)-\ln(c_b^2)}\right),
\]
which gives our claim. 
\end{proof}

\section{Case with two negative conjugates} \label{Chap--}

Last case which we examine in this paper is a base with two negative conjugates. As we can see in \cite{MaTi}, these bases are the dominant roots of the polynomials
\[
P(X)=X^3-aX^2-bX-1
\]  
where $b<a+2$ and $\Delta(P)\geq0$. As it was proved in \cite{Aki3}, these bases satisfy property $(F)$, therefore, a sum of two $\beta$-integers has a finite greedy expansion. This time, we work with two cases which are distinguished by their corresponding R\'{e}nyi expansions of $1$, i.e., $d_{\beta}(1)=(a+1)00a1$ if $b=a+1$ and $d_{\beta}(1)=ab1$ otherwise. In this paper, we focus only on $L_{\oplus}(\beta)$ for the second case, for which we can rewrite corresponding minimal polynomials as
\[
P(X)=X^3-aX^2-(a-z)X-1
\]
where $z\in\N_{0}$. We consider the alphabet $\A=\{0,1,\ldots,a\}$. 

We start with a useful lemma, which will help us to find the greedy expansion of the sum of two concrete $\beta$-integers.

\begin{lemma} \label{lemma:prelow--}
Let $\beta$ be the dominant root of the polynomial 
\[
P(X)=X^3-aX^2-(a-z)X-1
\]
where $a>1$ and $z\in\N_{0}$ is such that $a-z>0$. Let $\langle x_k\rangle_{\beta}=a0a0\ldots a0a$ be the greedy expansion of $\beta$-integer $x_k$ where $k$ is the number of $a$'s in this greedy expansion. Then one of the representations of $x_k+x_k$ in $\Z$ has the form
{\small
\[
1(a-1)(z+2)(a-z-3)(2(z+2))(a-2z-5)\ldots ((k-1)(z+2))(a-(k-1)z-(k-1))\bullet
\]
\[
\bullet (-ka+kz+(k-1))(-k).
\]}
\end{lemma}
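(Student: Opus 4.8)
The assertion is an algebraic identity: we must exhibit a finite string of \emph{integer} digits (possibly negative, possibly larger than $a$) whose value equals $x_k+x_k=2x_k$, where $x_k=\sum_{j=0}^{k-1}a\beta^{2j}$. Hence the only input needed is the defining relation of $\beta$, namely $\beta^3=a\beta^2+(a-z)\beta+1$ (equivalently $d_{\beta}(1)=a(a-z)1$); neither greediness nor the Parry condition enters at this stage, and the hypotheses $a>1$, $z\in\N_0$, $a-z>0$ do not affect the identity itself (they only guarantee that $\langle x_k\rangle_{\beta}$ is a genuine greedy expansion of a $\beta$-integer, and will matter only when one later converts the representation into the actual greedy expansion).

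The plan is to argue by induction on $k$, exploiting the recursion $x_{k+1}=\beta^2 x_k+a$, so that $2x_{k+1}=\beta^2(2x_k)+2a$. For the base cases $k=1$ and $k=2$ I would simply substitute $\beta^3=a\beta^2+(a-z)\beta+1$ (and $\beta^4=\beta\cdot\beta^3$) into the proposed string — for $k=1$ this is $1\,a\bullet(z-a)(-1)$, for $k=2$ it is $1(a-1)(z+2)(a-z-1)\bullet(1-2(a-z))(-2)$ — reduce modulo $P(\beta)$, and check that the values are $2a$ and $2a\beta^2+2a$ respectively; the coefficients of $\beta^2$, $\beta$, $\beta^0$ all cancel. (The string displayed in the statement is written for generic $k$: the digit $a-1$ in position $2k-2$ and the digits $a-iz-(2i+1)$ are the ``interior'' values, which sit in those positions only once $k$ is large enough that they are not the last block; for small $k$ the final block $\bigl((k-1)(z+2)\bigr)\bigl(a-(k-1)z-(k-1)\bigr)$ overrides them.)

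For the inductive step, assume the proposed string $R_k$ evaluates to $2x_k$. Multiplying by $\beta^2$ shifts $R_k$ two places to the left, and adding a digit $2a$ in the units position yields a string evaluating to $2x_{k+1}$. A short index comparison shows that in all positions $\geq 3$ this string already carries the digits prescribed for $R_{k+1}$ (the leading $1$, the interior odd digits $i(z+2)$ for $i\leq k-1$, and the interior even digits $a-iz-(2i+1)$ for $i\leq k-2$). Thus the whole problem reduces to rewriting the length-five block in positions $2,1,0,-1,-2$, which after the shift and the increment reads
\[
\bigl(a-(k-1)z-(k-1),\ (k-1)-k(a-z),\ 2a-k,\ 0,\ 0\bigr),
\]
as the block prescribed for $R_{k+1}$ in those positions, namely
\[
\bigl(a-(k-1)z-(2k-1),\ k(z+2),\ a-kz-k,\ k-(k+1)(a-z),\ -(k+1)\bigr).
\]
Equivalently, one checks that the difference of these two blocks, read as $\sum_p d_p\beta^p$ over $p\in\{2,1,0,-1,-2\}$, is zero: multiply by $\beta^2$, substitute $\beta^3=a\beta^2+(a-z)\beta+1$ and $\beta^4=(a^2+a-z)\beta^2+(a^2-az+1)\beta+a$, and observe that the coefficients of $\beta^2$, of $\beta$, and the constant term each vanish identically in $a,z,k$. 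Concretely, the excess $2a-k$ in the units digit is partly carried up — via $\beta^{n+3}=a\beta^{n+2}+(a-z)\beta^{n+1}+\beta^n$ — into positions $2$ and $1$, producing the drop by $k$ in position $2$ and the new interior odd digit $k(z+2)$ in position $1$, and is partly carried down into positions $-1,-2$, producing the updated below-the-point digits and the new final even digit $a-kz-k$ in position $0$.

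The only real work is this last bookkeeping step: verifying the three linear identities that equate the two length-five blocks, and confirming that the carries used to repair positions $\leq 2$ leave positions $\geq 3$ untouched (which is immediate once the left shift is in place). I do not expect any genuine obstacle beyond carrying out these expansions carefully. The subsequent task — turning the representation obtained here into the greedy expansion of $x_k+x_k$, where the size of the digits, the hypotheses on $a$ and $z$, and the lexicographic test all come into play — is where the real difficulty of the ensuing results will lie.
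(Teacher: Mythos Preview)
Your proposal is correct and follows essentially the same route as the paper: induction on $k$ via the recursion $2x_{k+1}=\beta^{2}(2x_k)+2a$, followed by a local rewriting of the digits near the radix point using the relation $\beta^{3}=a\beta^{2}+(a-z)\beta+1$. The only difference is presentational: the paper carries out the rewriting as two explicit ``add zero'' tableau steps, whereas you compress it into a single verification that the difference of the two length-five blocks in positions $2,1,0,-1,-2$ vanishes as a polynomial in $\beta$---which indeed it does.
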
   

\begin{proof}
The proof is by induction on $k$. If $k=1$, then $\langle x_1\rangle_{\beta}=a$ and we can get the required form in the following proces.
\[
\begin{matrix}
& a & \bullet & & &\\
& a & \bullet & & &\\
\hline
& 2a & \bullet & & &\\
1 & -a & \bullet & -a+z & -1\\
\hline
1 & a & \bullet & -a+z & -1\\
\end{matrix}
\]
Assume the formula holds for $k$; we will prove it for $k+1$. To get the similar representation for $k+1$, let move the fractional point in this representation for $k$ a add $2a$ to the new last digit, i.e., the digit on the position 0. By this procedure, we obtain some representation of $x_{k+1}+x_{k+1}$, which does not have the form given in the statement of our lemma. By repeated adding of zero, we can derive it.
{\small
\[
\begin{matrix}
\ldots & a-(k-1)z-(k-1) & -ka+kz+(k-1) & -k & \bullet\\
       &                &              & 2a & \bullet \\
\hline
\ldots & a-(k-1)z-(k-1) & -ka+kz+(k-1) & 2a-k & \bullet\\ 
       &   -k           & ka           & ka-kz & \bullet & k\\
\hline
\ldots & a-(k-1)z-(2k-1) & kz+(k-1) & (2+k)a-kz-k & \bullet& k\\
\end{matrix}
\]}
We next modify only the end of the previous representation.
{\small
\[
\begin{matrix}
\ldots & kz+(k-1) & (2+k)a-kz-k & \bullet& k\\
                 &  k+1     & -(k+1)a& \bullet & -(k+1)a+(k+1)z & -(k+1)\\
\hline
\ldots & k(z+2) & a-kz-k & \bullet& -(k+1)a+(k+1)z+k & -(k+1)\\           
\end{matrix}
\]}
As we can see, we have deduced the desired form. 

\end{proof}

We now proceed with modification of the representation from the previous lemma. Under certain conditions, the following representation is also the greedy expansion of $x_k+x_k$ in base $\beta$. 

\begin{lemma} \label{lemma:low--1}
Let $\beta$ be the dominant root of the polynomial 
\[
P(X)=X^3-aX^2-(a-z)X-1
\]
where $a>1$ and $z\in\N_{0}$ is such that $a-z>0$. Let $\langle x_k\rangle_{\beta}=a0a0\ldots a0a$ be the greedy expansion of $\beta$-integer $x_k$ where $k$ is the number of $a$'s in this greedy expansion. Then one of the representations of $x_k+x_k$ in $\Z$ has the form
{\small
\[
1(a-1)(z+2)(a-z-3)(2(z+2))(a-2z-5)\ldots ((k-1)(z+2))(a-(k-1)z-(2k-1))\bullet
\]
\[
\bullet (kz+2(k-1))(a-kz-(2k-1))((k-1)(z+2))(a-(k-1)z-(2k-3))\ldots.
\]
\[
\ldots (a-2z-3)(z+2)(a-z-1)1.
\]}
\end{lemma}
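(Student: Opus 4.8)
The plan is to proceed by induction on $k$, starting from the representation of $x_k+x_k$ established in Lemma~\ref{lemma:prelow--} and then carrying out a ``carry propagation'' from the right end leftward to obtain the new shape claimed in Lemma~\ref{lemma:low--1}. The representation in Lemma~\ref{lemma:prelow--} has a long admissible-looking prefix but ends with the two problematic digits $(-ka+kz+(k-1))$ and $(-k)$, which are negative. The key observation is that adding a suitable multiple of the relation $\beta^3-a\beta^2-(a-z)\beta-1=0$, placed so that its leading $1$ lines up with the digit in position $0$, allows us to ``split'' the negative tail: subtracting $\beta^0(\beta^3-a\beta^2-(a-z)\beta-1)$ repeatedly (with appropriate integer multipliers) replaces the rightmost negative block by a short admissible fractional part while only modifying a bounded number of digits immediately to the left of the fractional point.

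Concretely, I would first write out the representation from Lemma~\ref{lemma:prelow--} and apply the polynomial relation once at position $0$: this turns the block $\ldots\,((k-1)(z+2))\,(a-(k-1)z-(k-1))\,\bullet\,(-ka+kz+(k-1))\,(-k)$ into a new block whose digit in position $0$ becomes positive (namely $a-kz-(2k-1)$, matching the claimed integer part tail), with a carry of $1$ propagating into position $1$, turning $a-(k-1)z-(k-1)$ into $a-(k-1)z-(2k-1)$ as in the statement, and producing new fractional digits in positions $-1$ and $-2$. A second application of the relation, now anchored one place to the right, then ``mirrors'' the structure: the digits to the right of the fractional point reproduce, in reverse order, the same pattern $((k-1)(z+2)), (a-(k-1)z-(2k-3)), \ldots, (z+2), (a-z-1), 1$ that appears on the integer side, which is exactly the palindromic-looking shape asserted in Lemma~\ref{lemma:low--1}. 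I would verify the resulting digit identities by a direct (if slightly tedious) computation using $\beta^3 = a\beta^2+(a-z)\beta+1$, comparing coefficients of each power of $\beta$.

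The subtle point — and the step I expect to be the main obstacle — is not the algebraic bookkeeping but checking that the new representation is genuinely a \emph{greedy} expansion, i.e.\ that \emph{all} its digits lie in the alphabet $\{0,1,\ldots,a\}$ and that every suffix is lexicographically smaller than $d_\beta(1)=ab1$ (here $b=a-z$), per the Parry condition (Theorem of \cite{Par}). This forces side conditions on $a$, $z$ and $k$: each digit of the form $a-jz-(2j\pm1)$ must be nonnegative, which bounds how large $k$ can be for fixed $a,z$, and each digit $j(z+2)$ must not exceed $a$; moreover the admissibility check at the ``seam'' between the integer part and the fractional part, and at the very end $\ldots(a-z-1)1$, must be done explicitly against $ab1$. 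I would handle this by isolating the worst-case digits (the largest $j(z+2)$ and the smallest $a-jz-(2j-1)$, both occurring near $j=k-1$ or $j=k$), stating the inequalities they impose, and noting that under those inequalities the lexicographic comparison with $d_\beta(1)$ reduces to comparing the first one or two digits of each suffix, which is immediate since the leading digits of the blocks are at most $a-1<a=t_1$.

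Finally, once Lemma~\ref{lemma:low--1} is in hand, the intended payoff is a lower bound on $L_\oplus(\beta)$: the fractional part of $\langle x_k+x_k\rangle_\beta$ has $2k-1$ nonzero digits, so taking $k$ as large as the admissibility constraints allow (as a function of $a$ and $z$) yields $L_\oplus(\beta)\ge 2k-1$ for the corresponding family of cubic Pisot units with two negative conjugates. I would conclude by recording explicitly, for each fixed $z$, the threshold on $a$ beyond which the representation stays admissible for a given $k$, thereby obtaining the announced growing lower bounds.
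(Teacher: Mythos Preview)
Your overall strategy---induction on $k$ starting from the representation in Lemma~\ref{lemma:prelow--} and repeatedly adding shifts of the relation $\beta^3=a\beta^2+(a-z)\beta+1$ to clear the negative tail---is exactly the paper's approach. The paper carries it out by treating $k=1$ directly (one application of the relation turns $1\,a\bullet(-a+z)(-1)$ into $1\,(a-1)\bullet z\,(a-z-1)\,1$) and then, for the inductive step, manipulating the tail of the representation from Lemma~\ref{lemma:prelow--} for $k+1$ until it coincides with the already-established tail for $k$.

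There is, however, a genuine misreading in your proposal. The lemma asserts only that \emph{some} integer representation of $x_k+x_k$ has the displayed form; it says nothing about this being the greedy expansion. The admissibility check you call ``the main obstacle''---digits in $\{0,\ldots,a\}$ and the Parry lexicographic condition---is \emph{not} part of this lemma at all: it is carried out separately in the subsequent proposition, where the hypothesis $a\ge kz+2k-1$ is introduced precisely to force nonnegativity of the smallest digit $a-kz-(2k-1)$. So your proof of Lemma~\ref{lemma:low--1} should be pure algebraic bookkeeping with no side conditions on $a,z,k$; the inequalities you list belong one statement later.

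Two smaller corrections. First, a miscount: the fractional part in the displayed representation has $2k+1$ digits, not $2k-1$ (check $k=1$: the tail is $z,\,a-z-1,\,1$). Accordingly the payoff is $L_\oplus(\beta)\ge 2k+1$, as the paper states. Second, your description of the carries is slightly off: the last \emph{integer} digit becomes $a-(k-1)z-(2k-1)$, while $a-kz-(2k-1)$ is the \emph{second} fractional digit; and a single application of the relation does not suffice---you need roughly $k$ nested applications (or, equivalently, the inductive matching the paper uses) to produce the full mirrored tail down to the final~$1$.
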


\begin{proof}
As previously, the proof is by induction on $k$. For $k=1$ we have
\[
\begin{matrix}
1 & a & \bullet & -a+z & -1\\
  & -1& \bullet & a    & a-z & 1\\
\hline
1 & a-1 & \bullet & z & a-z-1 & 1    
\end{matrix}
\] 
where we use the beginning of Lemma \ref{lemma:prelow--}. Suppose that $x_k+x_k$ can be represented in this way and consider the representation of $x_k+x_k$ given by Lemma \ref{lemma:prelow--}. If we conveniently rearrange this representation, then its end has the following form.
{\tiny
\[
\begin{matrix}
\ldots & a-(k-1)z-(k-1) & \bullet & -ka+kz+(k-1) & -k\\
 &    -k           & \bullet & ka & ka-kz & k\\
\hline
\ldots & a-(k-1)z-(2k-1) & \bullet & kz+(k-1) & ka-kz-k & k\\
 &                 &     & k-1 & -(k-1)a & -(k-1)a+(k-1)z & -(k-1)\\                 
\hline
\ldots & a-(k-1)z-(2k-1) & \bullet & kz+2(k-1) & a-kz-k & -(k-1)a-(k-1)z+k & -(k-1) \\             
\end{matrix}
\]}

Let us consider the representation of $x_{k+1}+x_{k+1}$ from the previous lemma. We can also change it to the following form.
{\scriptsize
\[
\begin{matrix}
\ldots & a-kz-k & \bullet & -(k+1)a+(k+1)z+k & -(k+1)\\
 & -(k+1) & \bullet & (k+1)a & (k+1)a-(k+1)z & k+1\\
\hline
\ldots & a-kz-(2k+1) & \bullet & (k+1)z+k & (k+1)a-(k+1)z-(k+1) & k+1\\
\end{matrix}
\]}
We now proceed only with the part after the fractional point.
{\scriptsize
\[
\begin{matrix}
 \ldots &(k+1)z+k & (k+1)a-(k+1)z-(k+1) & k+1\\ 
 & k & -ka & -ka+kz & -k\\
\hline
\ldots &(k+1)z+2k & a-(k+1)z-(k+1) & -ka+kz+(k+1) & -k\\                                        
 &  & -k & ka & ka-kz & k\\
\hline
\ldots &(k+1)z+2k & a-(k+1)z-(2k+1) & kz+(k+1) & ka-kz-k & k\\
\end{matrix}
\]}
We can continue even more.
\[
\begin{matrix}
 \ldots & kz+(k+1) & ka-kz-k & k\\ 
 & -(k-1) & -(k-1)a & -(k-1)a+(k-1)z & -(k-1)\\
\hline
 \ldots & kz+2k & a-kz-k & -(k-1)a+(k-1)z+k & -(k-1)\\  
\end{matrix}
\]
As we can see, the ends of the derived representations of $x_k+x_k$ and $x_{k+1}+x_{k+1}$ consist of the same sequence of digits. If we modify this representation of $x_k+x_k$ to get the form given in the statement of the lemma, we can also change the representation of $x_{k+1}+x_{k+1}$ in a similar way. Together with the first part of the proof, it implies that the beginnings and the ends of the representations of $x_k+x_k$ and $x_{k+1}+x_{k+1}$ are the same, only the middle parts differ. Nevertheless, in this proof, we have also derived how this middle part for $k+1$ look like and it agrees with the assertion of the lemma.    
  
\end{proof}

\begin{prop}
Let $\beta$ be the dominant root of the polynomial 
\[
P(X)=X^3-aX^2-(a-z)X-1
\]
where $a>1$ and $z\in\N_{0}$ is such that $a-z>0$. If $a\geq kz+2k-1$ where $k\in\N_{0}$, then $L_{\oplus}(\beta)\geq 2k+1$. 
\end{prop}

\begin{proof}
The proof is based on a futher investigation of the representation given in the previous lemma. This representation is the greedy expansion if it satisfies two conditions. First of all, each digit is nonnegative. It holds in the case of $a\geq kz+2k-1$; the eventual negative digits are larger than the digit $a-kz-(2k-1)$. Next, we require that all the parts of our representation are lexicographically smaller than the infinite R\'{e}nyi expansion of 1, which is equal to $d_{\beta}^*(1)=(ab0)^{\omega}$. However, satisfying the previous condition, all the digits are less than $a$, thus we have the greedy expansion.

It remains to derive how many additional digits we get after the fractional point. We can claim that the greedy expansion of $x_{k}+x_{k}$ has two more digits in the end than the greedy expansion of $x_{k-1}+x_{k-1}$. Since the greedy expansion of $x_1+x_1$ has exactly three more digits after the fractional point, the greedy expansion of $x_{k}+x_{k}$ has $2k+1$ digits in this part. Hence $L_{\oplus}(\beta)\geq 2k+1$.     
\end{proof}

Note that if we fix $z$ and increase the coefficient $a$, then this lower bound on $L_{\oplus}(\beta)$ also raises. It differs from the situation when a base $\beta$ does not satisfy any of our useful properties. However, for some bases, this estimate is not optimal. As we will see in the following lemmas, the difference of certain $\beta$-integers can produce more digits after the fractional point.  

\begin{lemma}
Let $\beta$ be the dominant root of the polynomial 
\[
P(X)=X^3-aX^2-(a-z)X-1
\]
where $a>1$ and $z\in\N_{0}$ is such that $a-z>0$. Let $\langle x_k\rangle_{\beta}=a0a0\ldots a0a0$ and $\langle y_k\rangle_{\beta}=a0a0\ldots a0a$ be greedy expansions of $\beta$-integer $x_k$ and $y_k$ where $k$ is the number of $a$'s in these greedy expansions. Then one of the representations of $x_k-y_k$ in $\Z$ has the form
{\small
\[
(a-1)1(a-(z+2))(z+3)(a-2(z+2))(2z+5)\ldots ((k-2)z+2k-3)(a-(k-1)(z+2))
\]
\[
((k-1)z+(k-1))\bullet(ka-kz-(k-1))k.
\] } 
\end{lemma}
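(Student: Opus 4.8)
The plan is to prove this by induction on $k$, exactly in the spirit of Lemmas \ref{lemma:prelow--} and \ref{lemma:low--1}, since the pattern for $x_k-y_k$ is assembled from the same repeating blocks that appeared there, only with the sign flipped on the subtrahend. First I would set up the base case $k=1$: here $\langle x_1\rangle_\beta = a0$ and $\langle y_1\rangle_\beta = a$, so $x_1-y_1 = a\beta - a = a(\beta-1)$, and a short tableau using the relation $\beta^3 = a\beta^2+(a-z)\beta+1$ (equivalently the digit string carrying $-a$ in position $0$, written as $-1\bullet a\ (a-z)\ 1$ one place down) should produce the claimed form $(a-1)1\bullet(a-z-1)1$ — matching the stated general formula with $k=1$ once one reads the (empty) middle product and the endpoint digits correctly.

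For the inductive step I would assume the representation of $x_k-y_k$ has the stated shape and pass to $x_{k+1}-y_{k+1}$. The key observation is the recursive structure: $x_{k+1} = \beta^2 x_k + a\beta$ and $y_{k+1} = \beta^2 y_k + a$ (appending the block "$a0$" resp. "$a$" to the left shifts the old string by two places and adds the new leading digits), so $x_{k+1}-y_{k+1} = \beta^2(x_k-y_k) + a\beta - a$. Thus I take the representation for $x_k-y_k$, shift the fractional point two places, and then perform the same carry manipulation ("add $2a$-type corrections, then repeatedly add strings equal to $0$ of the form $1\ (-a)\ (-(a-z))\ (-1)$") that was used in the proofs of the two preceding lemmas. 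Tracking how the new digits $((k-1)z+(k-1))$ and $(a-(k-1)(z+2))$ and the post-point digits $(ka-kz-(k-1))$ and $k$ get generated is the bookkeeping heart of the argument; I would display only the end of the tableau, as in Lemma \ref{lemma:low--1}, and remark that the beginning and middle inherit the correct form from the induction hypothesis together with the block pattern already verified.

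The main obstacle I expect is purely clerical rather than conceptual: keeping the coefficients of $z$ and the constant terms straight through the carries, and checking that the "glue" between the shifted old representation and the freshly produced digits is consistent — i.e. that the digit the induction hypothesis places just before the fractional point, namely $((k-1)z+(k-1))$ for level $k$, is exactly what the carry cascade at level $k+1$ delivers in that slot. A secondary subtlety is that this lemma only asserts that the displayed string is \emph{a} representation over $\mathbb{Z}$, not yet the greedy expansion, so I need not worry here about non-negativity of digits or the lexicographic condition against $d_\beta^*(1)$; those will be imposed in the proposition that follows, analogously to the passage from Lemma \ref{lemma:low--1} to the subsequent proposition. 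Hence the proof can legitimately stop once the algebraic identity of representations is verified for all $k$.
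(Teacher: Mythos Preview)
Your approach is exactly the paper's: induction on $k$, with the base case $k=1$ done by a short tableau and the inductive step obtained by shifting the level-$k$ representation two places (i.e.\ using $x_{k+1}-y_{k+1}=\beta^2(x_k-y_k)+a\beta-a$) and then clearing the resulting out-of-range digits by adding integer multiples of the string $1\,(-a)\,(-(a-z))\,(-1)$. The paper likewise displays only the tail of the tableau and invokes the induction hypothesis for the rest.

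One concrete slip: your stated base case $(a-1)1\bullet(a-z-1)1$ is not a representation of $a\beta-a$; a direct check shows it differs from $a\beta-a$ by $1-\beta^{-1}\neq0$. The correct $k=1$ form, and the one the paper gives, is $(a-1)0\bullet(a-z)1$ (so the digit just before the fractional point is $((k-1)z+(k-1))=0$ and the first fractional digit is $ka-kz-(k-1)=a-z$). This is precisely the kind of clerical error you anticipated; once you fix it the induction goes through as you described.
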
    

\begin{proof}
The proof is again by induction on $k$. For $k=1$, we obtain the representation of the form
\[
(a-1)0\bullet(a-z)1,
\]
which satifies our conditions. Assumming the formula to hold for $k$, we will prove it for $k+1$. We move the fractional point in the representation of $x_k-y_k$ and add the number $a\beta-a$. 
{\small
\[
\begin{matrix}
 \ldots &(k-1)z+(k-1) & ka-kz-(k-1) & k & \bullet\\
& & a & -a & \bullet\\
\hline
 \ldots &(k-1)z+(k-1) & (k+1)a-kz-(k-1) & -a+k & \bullet\\
 & k & -ka & -ka+kz & \bullet & -k\\
\hline
 \ldots &(k-1)z+(2k-1) & a-kz-(k-1) & -(k+1)a+kz+k & \bullet & -k\\
\end{matrix}
\]}
We proceed with the modification of the end of the previous result.
{\small
\[
\begin{matrix}
 \ldots  & a-kz-(k-1) & -(k+1)a+kz+k & \bullet & -k\\ 
& -(k+1) & (k+1)a & \bullet & (k+1)a-(k+1)z & k+1\\
\hline
 \ldots  & a-k(z+2) & kz+k & \bullet & (k+1)a-(k+1)z-k & k+1\\   
\end{matrix}
\]}
As we can see, this new representation of $x_k-y_k$ has the desired form.  
\end{proof}

\begin{lemma}
Let $\beta$ be the dominant root of the polynomial 
\[
P(X)=X^3-aX^2-(a-z)X-1
\]
where $a>1$ and $z\in\N_{0}$ is such that $a-z>0$. Let $\langle x_k\rangle_{\beta}=a0a0\ldots a0a0$ and $\langle y_k\rangle_{\beta}=a0a0\ldots a0a$ be greedy expansions of $\beta$-integer $x_k$ and $y_k$ where $k$ is the number of $a$'s in these greedy expansions. Then one of the representations of $x_k-y_k$ in $\Z$ has the form
{\small
\[
(a-1)1(a-(z+2))(z+3)(a-2(z+2))(2z+5)\ldots ((k-2)z+2k-3)(a-(k-1)(z+2))
\]
\[
((k-1)(z+2))\bullet(a-kz-2(k-1))((k-1)(z+2))(a-(k-1)z-(2k-3))((k-2)(z+2))\ldots
\]
\[
\ldots (z+2)(a-z-1)1.
\]}
\end{lemma}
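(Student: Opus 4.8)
The plan is to prove the statement by induction on $k$, running exactly parallel to the proof of Lemma~\ref{lemma:low--1}: the role played there by the short representation of Lemma~\ref{lemma:prelow--} is taken here by the two--digit--tail representation supplied by the preceding lemma, read off at the indices $k$ and $k+1$. Since the statement asks only that \emph{some} $(\beta,\Z)$--representation have the displayed shape (not that it be the greedy expansion), the whole argument is an algebraic identity check, and the only fact used about $\beta$ is that it is a root of $X^{3}-aX^{2}-(a-z)X-1$: placed at any four consecutive positions $(j{+}3,j{+}2,j{+}1,j)$, the digit string $1\,(-a)\,(-(a-z))\,(-1)$ represents $0$ and may be added to any representation without changing the underlying number.

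For the base case $k=1$ one has $\langle x_{1}\rangle_{\beta}=a0$ and $\langle y_{1}\rangle_{\beta}=a$, and the preceding lemma already exhibits the representation $(a-1)0\bullet(a-z)1$ of $x_{1}-y_{1}$; this is precisely the displayed string specialised to $k=1$ (all interior blocks empty, the trailing pre--point digit $(k-1)(z+2)$ equal to $0$, and the leading post--point digit $a-kz-2(k-1)$ equal to $a-z$).

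For the inductive step, write $S_{k}$ for the string displayed in the statement, and recall that $x_{k+1}-y_{k+1}=\beta^{2}(x_{k}-y_{k})+a\beta-a$, which is how the preceding lemma passes from index $k$ to index $k+1$. Starting from the two--digit--tail representation of $x_{k+1}-y_{k+1}$, I would run the same chain of rewrites as in Lemma~\ref{lemma:low--1}: one successively adds copies of the zero string $1\,(-a)\,(-(a-z))\,(-1)$, scaled by suitable integers and anchored at positions $0,-1,-2,\dots$, each rewrite absorbing the current digit just after the radix point and transporting the correction two places further to the right. After finitely many explicit steps the active tail of the partially rewritten representation of $x_{k+1}-y_{k+1}$ agrees, digit for digit, with the active tail produced by applying the analogous rewrites to the representation of $x_{k}-y_{k}$; the induction hypothesis says the latter completes to $S_{k}$, so the remainder of the cascade for $k+1$ merely reproduces the common tail $\ldots(z+2)(a-z-1)1$ shared by $S_{k}$ and $S_{k+1}$, while the head — the integer part together with the first few post--point digits $a-(k+1)z-2k$, $k(z+2)$, $a-kz-(2k-1)$, $(k-1)(z+2)$ — has been computed explicitly in the first few steps. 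Concatenating head and recycled tail and comparing digits, one checks that the result is exactly $S_{k+1}$, the junction identities being of the elementary type $\bigl(a-kz-2(k-1)\bigr)+a=2a-kz-2(k-1)$ seen in the matrices of Lemma~\ref{lemma:low--1}.

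The hard part is not conceptual but the bookkeeping: keeping the digit formulas in $a$, $z$, $k$ and the running index mutually consistent along the (length $O(k)$) cascade, verifying that the place where the freshly produced head meets the recycled body — and where that body meets the freshly produced final digits — closes up exactly, and ensuring that the degenerate small--$k$ and small--$z$ configurations (empty interior blocks, digits collapsing to $0$) create no spurious mismatch. As always in such long columnar additions, one must also guard against sign slips.
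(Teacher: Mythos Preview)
Your proposal is correct and follows essentially the same approach as the paper. The paper's own proof is a single sentence: it says the result follows by the same method as Lemma~\ref{lemma:low--1}, starting from the two--digit--tail representation supplied by the preceding lemma; your plan is precisely an elaboration of that sentence, with the same induction on $k$, the same base case $(a-1)0\bullet(a-z)1$, and the same device of matching the rewritten tails for $k$ and $k+1$ so that the induction hypothesis carries the common suffix.
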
  

\begin{proof}
This follows by the same method as in the proof of Lemma \ref{lemma:low--1}. Using the representation obtained in the previous lemma, we get the required form.
\end{proof}

Under some conditions, the representation from the previous lemma is the greedy expansion of the difference of $x_k-y_k$. 

\begin{prop}
Let $\beta$ be the dominant root of the polynomial 
\[
P(X)=X^3-aX^2-(a-z)X-1
\]
where $a>1$ and $z\in\N_{0}$ is such that $a-z>0$. If $a\geq kz+2k-2$ where $k\in\N_{0}$, then $L_{\oplus}(\beta)\geq 2k$.
\end{prop}

\begin{proof}
In this proof, we ask under which conditions the representation from the previous lemma is the greedy expansion in base $\beta$. 
Firstly, all the digits of this representation are nonnegative if $a\geq kz+2k-2$. Except for the case when $k=1$, this condition also implies that all the digits are less than $a$, thus we have the greedy expansion of $x_k-y_k$. If $k=1$, our representation is equal to $(a-1)0\bullet(a-z)1$. If $z\neq 0$, we have obtained the greedy expansion of $x_1-y_1$. For $z=0$, consider the infinite R\'{e}nyi expansion of 1, which is equal to $d_{\beta}^*(1)=(aa0)^{\omega}$. Since $a>1$, our representation is also the greedy expansion.

It is easily seen that the greedy expansion of $x_{k}-y_k$ has two more digits after the fractinal point than the greedy expansion of $x_{k-1}-y_{k-1}$, and we have exactly two such digits for $k=1$. Hence $L_{\oplus}(\beta)\geq 2k$, which completes the proof.    
\end{proof}

In some cases, this proposition improves our first lower bound on $L_{\oplus}(\beta)$. 

\begin{prop}
Let $\beta$ be the dominant root of the polynomial 
\[
P(X)=X^3-aX^2-(a-z)X-1
\]
where $a>1$ and $z\in\N_{0}$ is such that $a-z>0$. Let $k_1\in\N$ be the largest number such that $a\geq k_1z+2k_1-1$. Let $k_2\in\N$ be the largest number such that $a\geq k_2z+2k_2-2$. Then
\[
L_{\oplus}(\beta)\geq \max\{2k_1+1,2k_2\}.
\] 
\end{prop}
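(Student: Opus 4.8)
The plan is to derive this directly from the two propositions immediately above, since the statement is nothing more than their optimal combination. First I would check that $k_1$ and $k_2$ are well defined as elements of $\N$. For $k=1$ the inequality $a\geq kz+2k-1$ becomes $a\geq z+1$, which holds because $a-z>0$ with $a,z$ integers forces $a\geq z+1$; likewise $a\geq kz+2k-2$ is clear for $k=1$. On the other hand the right-hand sides $k(z+2)-1$ and $k(z+2)-2$ are strictly increasing and unbounded in $k$, so in each case the set of admissible $k$ is nonempty and finite and therefore has a largest element. Explicitly $k_1=\lfloor\frac{a+1}{z+2}\rfloor$ and $k_2=\lfloor\frac{a+2}{z+2}\rfloor$.

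Next I would simply apply the two preceding propositions. Since $a\geq k_1z+2k_1-1$ by the defining property of $k_1$, the first of them (with $k=k_1$) gives $L_{\oplus}(\beta)\geq 2k_1+1$. Since $a\geq k_2z+2k_2-2$ by the defining property of $k_2$, the second one (with $k=k_2$) gives $L_{\oplus}(\beta)\geq 2k_2$. Both inequalities hold for the same $\beta$, so $L_{\oplus}(\beta)\geq\max\{2k_1+1,\,2k_2\}$, which is exactly the claim.

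There is essentially no obstacle: all the real work sits in the earlier lemmas, where the greedy expansions of the sums $x_k+x_k$ and of the differences $x_k-y_k$ were computed, and in the two propositions that turned those computations into lower bounds under the hypotheses $a\geq kz+2k-1$ and $a\geq kz+2k-2$. The only point worth a comment is which of the two bounds is the larger for a given pair $(a,z)$: comparing $2k_1+1$ with $2k_2$ amounts to comparing $\lfloor\frac{a+1}{z+2}\rfloor$ with $\lfloor\frac{a+2}{z+2}\rfloor$, two quantities differing by at most $1$, so the maximum is genuinely needed only in the residue class of $a$ modulo $z+2$ where the second floor jumps ahead. I would also add one sentence recording that, as in the discussion following the first lower bound, this estimate is still not claimed to be sharp in general.
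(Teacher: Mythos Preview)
Your proposal is correct and matches the paper's approach exactly: the paper states this proposition without proof, treating it as the immediate combination of the two preceding propositions, and you have simply spelled out the obvious details (well-definedness of $k_1,k_2$ and application of each proposition with its optimal $k$). Your additional observations about the explicit formulas $k_i=\lfloor (a+i)/(z+2)\rfloor$ and about which bound dominates are useful supplements but not required.
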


If $z=0$, then we can easily check that $L_{\oplus}(\beta)\geq a+2$. For $z=2$ and low values of the coefficient $a$, we can see our situation in Table \ref{tab:5}.

\begin{table}[h] 
\centering
\begin{tabular}{|c|c|c|c|}
\hline
$a$ & $1^{st}$ lower bound & $2^{nd}$ lower bound & $L_{\oplus}(\beta)$\\
\hline
\hline 
$3$ & $3$ & $2$ & $\geq 3$\\
\hline
$4$ & $3$ & $2$ & $\geq 3$\\
\hline
$5$ & $3$ & $2$ & $\geq 3$\\
\hline
$6$ & $3$ & $4$ & $\geq 4$\\
\hline
$7$ & $5$ & $4$ & $\geq 5$\\
\hline
$8$ & $5$ & $4$ & $\geq 5$\\
\hline
$9$ & $5$ & $4$ & $\geq 5$\\
\hline
$10$ & $5$ & $6$ & $\geq 6$\\
\hline
$11$ & $7$ & $6$ & $\geq 7$\\
\hline
$12$ & $7$ & $6$ & $\geq 7$\\
\hline
\end{tabular}
\caption{Lower bounds for $z=2$} \label{tab:5}
\end{table}

In \cite{AmMaPe2}, the authors have found better lower bound on $L_{\oplus}(\beta)$ for $b=1$. Neverthless, it would be interesting to examine how precise is our bound for large values of the coefficient $b$. In the rest of the paper, we will derive an upper bound for several bases of this type. In some cases, we will obtain the exact value of $L_{\oplus}(\beta)$.

Let $\beta'$ and $\beta''$ be the conjugates of the base $\beta$. To each of them we can assign the corresponding $\Q$-isomorphism denoted by $'$ or $''$, respectively. We consequently set $\sigma(x)=(x',x'')$ for all $x\in\R$. Then the closure of the image of $\Z_{\beta}^+$, i.e., $\overline{\sigma(\Z_{\beta}^+)}$, is the Rauzy fractal of $\beta$. 
{\small
\begin{table} \label{tab:6}
\centering
\begin{tabular}{|c|c|c|c|c|}
\hline
$\text{Polynomial}$ & $3H(a,b)$ & $\sqrt{x'^2+x''^2}$ & $\langle x \rangle_{\beta}$ & $L_{\oplus}(\beta)$\\
\hline
\hline
$X^3-5X^2-5X-1$ & $28.2984$ & $32.4341$ & $0\bullet 05352241$ & $7$\\
\hline
$X^3-6X^2-6X-1$ & $38.248$ & $39.9482$ & $0\bullet 612043251$ & $8$\\
\hline
$X^3-7X^2-7X-1$ & $49.6577$ & $56.4726$ & $0\bullet 0457244261$ & $9$\\
\hline
$X^3-7X^2-6X-1$ & $37.6329$ & $53.1601$ & $0\bullet05273361$ & $6\leq L_{\oplus}(\beta)\leq7$\\
\hline
$X^3-8X^2-7X-1$ & $46.6466$ & $55.0153$ & $0\bullet07573361$ & $7$\\
\hline
$X^3-9X^2-8X-1$ & $56.6167$ & $64.2425$ & $0\bullet813064371$ & $7\leq L_{\oplus}(\beta)\leq8$\\
\hline
$X^3-8X^2-6X-1$ & $39.4296$ & $49.8007$ & $0\bullet8160451$ & $5\leq L_{\oplus}(\beta)\leq6$\\
\hline
$X^3-9X^2-7X-1$ & $47.2792$ & $75.0428$ & $0\bullet08292461$ & $5\leq L_{\oplus}(\beta)\leq7$\\
\hline
$X^3-10X^2-8X-1$ & $55.8281$ & $76.2425$ & $0\bullet0(10)593471$ & $6\leq L_{\oplus}(\beta)\leq7$\\ 
\hline
$X^3-11X^2-9X-1$ & $65.0857$ & $77.8837$ & $0\bullet09794481$ & $7$\\
\hline
$X^3-10X^2-7X-1$ & $49.3429$ & $77.7175$ & $0\bullet(10)060561$ & $5\leq L_{\oplus}(\beta)\leq6$\\
\hline
$X^3-11X^2-8X-1$ & $57.0579$ & $97.6844$ & $0\bullet0(10)2(11)2571$ & $5\leq L_{\oplus}(\beta)\leq7$\\
\hline
$X^3-12X^2-9X-1$ & $65.3281$ & $108.5816$ & $0\bullet174(11)3581$ & $5\leq L_{\oplus}(\beta)\leq7$\\
\hline    
\end{tabular}
\caption{Upper bounds for bases with negative conjugates, $x$ as the considered minimum}
\end{table} }

Our next step is to find the $\beta$-integer with the highest value of the norm
\[
\|\sigma(x)\|_2=\sqrt{x'^2+x''^2}.
\]
Considering the symmetry of $\Z_{\beta}$, we can restrict to nonnegative $\beta$-integers. In the case of two negative conjugates, we know the upper bound on the value of $\|\sigma(x)\|_2$ for $x\in\Z_{\beta}$, which is
\[
\sqrt{\left(\frac{a}{1-\beta'^2}\right)^2+\left(\frac{a}{1-\beta''^2}\right)^2}.
\]
Let us denote it by $H(a,b)$ referring to the coefficients of the minimal polynomial of $\beta$.
Let $w$ be some number obtained when we add or subtract two nonnegative $\beta$-integers. Obviously, $0\leq\|\sigma(w)\|_2\leq 2H(a,b)$. 

Now consider the numbers whose greedy expansion has exactly $l$ digits after the frational point, i.e., $a_{-l}\neq 0$ and $a_i=0$ for all $i< -l$. If we prove that for all such numbers it holds that their norm is greater than $2H(a,b)$, we also show that the numbers with more digits after the fractional point satisfy the same inequality. This conclusion can be deduced from the following facts. The elements which has exactly $l+1$ after the fractional point in their greedy expansions can be obtained from the numbers with $l$ such digits multiplying by $\frac{1}{\beta}$. It also implies that the components of their image under the map $\sigma$ are consequently multiplied by $\frac{1}{\beta'}$ or $\frac{1}{\beta''}$, respectively. However, $\frac{1}{\beta'},\frac{1}{\beta''}<-1$, which leads to an increase in the value of the norm of $\sigma$.     


Therefore, if we prove that all the numbers $w_l$ with $l$ digits after the fractional point satisfy $\|\sigma(w_l)\|_2>2H(a,b)$, we obtain an upper bound on $L_{\oplus}(\beta)$. Nevertheless, we have infinitely many such numbers. However, we can consider the numbers whose greedy expansion has only $l$ digits after the fractional point, no nonzero digits before, and find the minimum of the norm of these numbers. The set of such numbers is finite and can be examined by a program. Nevertheless, we must also increase the value of our bound from $2H(a,b)$ to $3H(a,b)$ to deal with this simplification. The Table \ref{tab:6} shows our results for several cubic Pisot unit bases with two negative conjugates, where we combine the computer calculations with the lower bound provided by the beginning of this section. The number $x$ denotes the found minimum.

As we can see, in some cases, we have obtained the exact value of $L_{\oplus}(\beta)$.

\subsection*{Acknowledgements}
The author is greatly indebted to Edita Pelantov\'{a} and Zuzana Mas\'{a}kov\'{a} for their advice and support. 
\bigskip

\end{document}